\newcommand{\rmref}[1]{{\rm\ref{#1}}}
\title{Isospectral metrics on weighted projective spaces}
\author[Martin Weilandt]{\hyperlink{Weilandt}{Martin Weilandt}}
\address{\hypertarget{Weilandt}Humboldt-Universit\"at zu Berlin, Institut f\"ur Mathematik, Unter den Linden 6, 10099 Berlin, Germany}
\curraddr{Departamento de Matem\'atica, Universidade Federal de Santa Catarina, Campus Universit\'ario Trindade, Florian\'opolis/SC, 88040-900, Brazil}
\email{martin@mtm.ufsc.br}
\thanks{This research was funded by the Berlin Mathematical School and partially funded by the DFG Sonderforschungsbereich SFB 647}
\keywords{spectral geometry, Laplace operator, isospectral orbifolds, weighted projective space}
\subjclass[2010]{Primary: 58J53, 58J50; Secondary: 53C20, 57R18}
\newtheorem{theorem}{Theorem}[section]    
\newtheorem*{theorem*}{Theorem}  
\newtheorem{lemma}[theorem]{Lemma}          
\newtheorem{corollary}[theorem]{Corollary}
\newtheorem{proposition}[theorem]{Proposition}
\theoremstyle{definition}
\newtheorem{definition}[theorem]{Definition}    
\newtheorem*{remark}{Remark}             
\newtheorem{notations}[theorem]{Notations and Remarks}
\newcommand{\C}{\mathbb{C}}
\newcommand{\Q}{\mathbb{Q}}
\newcommand{\R}{\mathbb{R}}
\newcommand{\Z}{\mathbb{Z}}
\newcommand{\co}{\colon\thinspace}
\newcommand{\CP}{\mathbb{CP}}
\renewcommand{\O}{\mathcal{O}}
\newcommand{\ol}[1]{\overline{#1}}
\DeclareMathOperator{\re}{Re}
\DeclareMathOperator{\tr}{tr}
\DeclareMathOperator{\spec}{spec}
\DeclareMathOperator{\grad}{grad}
\DeclareMathOperator{\Isom}{Isom}
\DeclareMathOperator{\id}{id}
\DeclareMathOperator{\dvol}{dvol}
\DeclareMathOperator{\Aut}{Aut}
\DeclareMathOperator{\Diffeo}{Diffeo}
\DeclareMathOperator{\spann}{span}
\DeclareMathOperator{\Id}{Id}
\newcommand{\qak}[1]{{\left[#1\right]}} 
\newcommand{\qao}[1]{{\left[#1\right]}} 
\newcommand{\qbo}[1]{{\ol{#1}}} 
\newcommand*{\reg}{\text{reg}} 
\numberwithin{equation}{section}
\begin{document}

\begin{abstract}    
We construct the first examples of families of bad Riemannian orbifolds which are isospectral with respect to the Laplacian but not 
isometric. In our case these are particular fixed weighted projective spaces equipped with isospectral metrics obtained by a 
generalization of Sch\"uth's version of the torus method.
\end{abstract}
\maketitle
\tableofcontents


\section{Introduction}
An orbifold is a generalization of a smooth manifold which is in general not locally homeomorphic to an open subset of $\R^n$ but to 
the quotient of a smooth manifold $\widetilde{U}$ by an effective action of a finite group $\Gamma$. A Riemannian metric is then in 
each orbifold chart as above given by a $\Gamma$-invariant metric on $\widetilde{U}$. Given a Riemannian metric on an orbifold, it 
is possible to generalize the manifold Laplacian and it is well-known that in the compact setting the spectrum of the orbifold 
Laplacian can be written as an infinite sequence
\[0=\lambda_0\le\lambda_1\le\lambda_2\le\lambda_3\le\cdots\nearrow \infty\]
of eigenvalues, each repeated according to the (finite) dimension of the corresponding eigenspace (\cite{MR2433665}). The 
observation that the spectrum contains geometric information like dimension, volume and certain curvature integrals gave rise to the 
field of spectral geometry which asks about the degree to which the spectrum of the Laplacian determines the geometry of the given 
space.

Besides a vast theory on manifolds (cf.~\cite{MR1736857}), the spectral geometry on orbifolds has recently received rising 
attention, since these provide the arguably simplest type of singular spaces, and it is still an open problem, whether a singular 
orbifold can be isospectral to (i.e., have the same spectrum as) a manifold (though there are some results on isotropy groups, which 
in a way measure the degree of singularity of an orbifold (\cite{MR2447904,MR2263484,MR2155380}). However, all known nontrivial 
examples of isospectral orbifolds (also compare \cite{MR1152950,MR1181812,MR2761691,MR2802590,sutton10}) are good, i.e., they can be 
written as the quotient of a Riemannian manifold $M$ by a discrete subgroup $\Gamma$ of the isometry group of $M$, and the 
eigenspaces on the orbifold $M/\Gamma$ correspond to the $\Gamma$-invariant eigenspaces on $M$. Since the known constructions can be 
seen to never yield an isospectral pair of a manifold and a singular orbifold, the 
more intricate setting of bad (i.e., non-good) orbifolds deserves special attention. The isospectrality of bad orbifolds was already 
investigated in \cite{MR2395193} and \cite{MR2529473}, where large families of nonhomeomorphic weighted projective spaces with their 
standard metrics were shown to be pairwise nonisospectral. Weighted projective spaces are a generalization of complex projective 
space obtained by taking certain quotients of odd-dimensional spheres by $S^1$-actions with finite stabilizers.

In this work we now use certain weighted projective spaces and special metrics based on ideas in \cite{MR1895349} to construct isospectral metrics on bad orbifolds. Our main result is the following Theorem \ref{theorem:maintheorem}.

\begin{theorem*}
 For every $n\ge 4$ and for all pairs $(p,q)$ of coprime positive integers there are isospectral families of pairwise nonisometric metrics on the orbifold $\O(p,q)$, a weighted projected space of dimension $2n\ge 8$, which is a bad orbifold for $(p,q)\ne (1,1)$.
\end{theorem*}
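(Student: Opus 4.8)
The plan is to build the isospectral families on weighted projective spaces by lifting everything to the sphere and applying a version of Sutton–Schueth's torus method there. Concretely, the weighted projective space $\O(p,q)$ is a quotient of the sphere $S^{2n+1}$ by a specific $S^1$-action with finite stabilizers, so the orbifold Laplacian on $\O(p,q)$ with any $S^1$-invariant metric corresponds to the restriction of the sphere Laplacian to $S^1$-invariant functions. Hence it suffices to construct a family of $S^1$-invariant metrics on $S^{2n+1}$, mutually isospectral on $S^1$-invariant functions, which descend to pairwise non-isometric metrics downstairs. The natural source of such metrics is to deform the round metric on $S^{2n+1}\subset\C^{n+1}$ by a family of linear torus actions: writing $\C^{n+1}=\C^{n}\times\C$ (or a similar splitting exploiting $n\ge 4$), we let a torus $T^k$ act by rotations in the coordinate planes and pull back the round metric by torus-valued maps $h_\lambda\co S^{2n+1}\to T^k$, producing metrics $g_\lambda$ that agree with the round metric along the torus orbits but differ transversally.

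First I would set up the torus decomposition of functions: $S^1$-invariant functions on $S^{2n+1}$ decompose under the larger torus $T^k$ into isotypic components indexed by characters, and on each component the Laplacian of $g_\lambda$ is an explicit operator that depends on $\lambda$ only through the way $h_\lambda$ twists the torus directions. The key isospectrality criterion, following Schueth's version of the torus method, is that if two deformation parameters $\lambda,\lambda'$ satisfy an $O(2)$-type equivalence on each fixed character sector — i.e. there is, for every relevant character $\mu$, an isometry of the torus fibre intertwining the two twists restricted to the $\mu$-eigenspace — then the resulting Laplacians are conjugate sector by sector, hence $g_\lambda$ and $g_{\lambda'}$ are isospectral. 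I would verify that this criterion is compatible with $S^1$-invariance, so that the isospectrality passes to the quotient orbifolds $(\O(p,q),g_\lambda)$; this uses that the $S^1$-action defining $\O(p,q)$ sits inside the deforming torus, so the restriction to $S^1$-invariant functions respects the sector decomposition.

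Next I would arrange the non-isometry. The metrics $g_\lambda$ are constructed so that they all share the same torus orbits and the same induced volume, but the transversal geometry varies; I would compute a local isometry invariant (for instance the norm of a suitable curvature expression, or the behaviour of the scalar curvature along the singular strata) that changes continuously and nontrivially with $\lambda$ within an isospectral family. Since isometries of orbifolds must preserve the singular set and all pointwise curvature invariants, showing that this invariant takes a continuum of values on a putative isospectral family forces the family to contain pairwise non-isometric members after discarding the isometric redundancy. The condition $n\ge 4$ enters precisely here: it guarantees enough coordinate planes to make the space of admissible twists $h_\lambda$ high-dimensional, so that the $O(2)$-equivalence relation producing isospectrality still leaves a positive-dimensional continuum of genuinely non-isometric metrics.

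The main obstacle I expect is the descent step, namely making the torus method interact correctly with the orbifold singularities of $\O(p,q)$. Unlike the smooth manifold case, here $S^1$ acts with finite stabilizers, so the quotient is a bad orbifold and one must check that the orbifold Laplacian genuinely equals the $S^1$-invariant part of the sphere Laplacian for each $g_\lambda$, that the deforming torus commutes with the structure $S^1$ so that invariance is preserved, and that the isospectrality criterion — originally formulated for smooth quotients — survives restriction to the invariant sector without the singular strata contributing anomalous terms. Establishing this compatibility carefully, and confirming that the non-isometry invariant is well-defined as an orbifold invariant insensitive to the choice of local uniformizing chart, is where the real work lies; the isospectrality computation itself, once the sector decomposition is in place, should reduce to the now-standard algebra of the torus method.
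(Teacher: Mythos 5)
Your central reduction is not valid as stated, and this is precisely the difficulty the paper is organized around. You claim that for any $S^1$-invariant metric the orbifold Laplacian on $\O(p,q)$ ``corresponds to the restriction of the sphere Laplacian to $S^1$-invariant functions.'' For the round metric (and its torus deformations) this fails whenever $p\ne q$: the orbit through $(u,v)$ has length $2\pi\sqrt{p^2\|u\|^2+q^2\|v\|^2}$, which is non-constant, so the fibres of $S^{2n+1}\to\O(p,q)$ are not minimal and the Rayleigh quotient of an invariant function upstairs does not equal the Rayleigh quotient of the induced function downstairs (the $L^2$-norms differ by the non-constant orbit-volume factor). You flag this as a compatibility to ``check,'' but the check fails; fixing it requires an actual new ingredient. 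The paper has two: its main route generalizes Sch\"uth's torus method to orbifolds (Theorems \ref{theorem:torusaction} and \ref{theorem:isospect}) and applies it \emph{directly} on $\O(p,q)$, never passing through the sphere spectrum; its alternative route (Section \ref{subsubsection:examples-families}) first replaces the round metric by a vertically rescaled metric $h_0$ for which all regular orbits have length $2\pi$, so that fibres become totally geodesic --- and even then one only obtains the inclusion $\spec(\O,h_\kappa^{S^1})\subset\spec(S^{2n+1},h_\kappa)$, which yields isospectrality only for \emph{continuous families}, via continuity of the eigenvalue functions $\mu_i(t)$ into a discrete set. Your proposal contains neither the vertical rescaling nor the continuity argument, so even granting your setup it would not close.

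The non-isometry step is also not an argument yet. Proposing to exhibit ``a local isometry invariant (norm of a curvature expression, scalar curvature along singular strata) that changes nontrivially with $\lambda$'' is a hope, not a proof: the metrics in question are isospectral, hence share all heat invariants (integrals of curvature expressions), and nothing guarantees that a computable pointwise invariant separates them --- the torus-method literature avoids this route for exactly that reason. What the paper actually does is structural: any isometry restricts to $\O^\reg$, and by Proposition \ref{proposition:nonisometry} (Sch\"uth's criterion) non-isometry follows from conditions (N) and (G) on the curvature forms $\Omega_\lambda,\Omega_{\lambda^\prime}$ of the principal $T$-bundle $\widehat{\O}\to\widehat{\O}/T$. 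Verifying these requires the genuinely new work specific to $\O(p,q)$: showing every $F\in\Aut^T_{g_0}(\O^\reg)$ preserves the sets $\O_{a,b}\cup\O_{b,a}$ (Lemma \ref{lemma:Oab}, via orbit areas and a completion argument), deducing $\mathcal{D}\subset\mathcal{E}$ from orbit angles (Proposition \ref{proposition:DinE}), and then an algebraic computation on $L=\O_{a,a}$ reducing everything to non-equivalence and genericity of the maps $j,j^\prime\co\R^2\to\mathfrak{su}(n-1)$. Relatedly, the role of $n\ge 4$ is not that the space of twists is high-dimensional in your sense; it is that $n-1\ge 3$ is needed for the existence (Proposition \ref{proposition:isomaps}) of continuous families of pairwise isospectral, pairwise non-equivalent, generic maps into $\mathfrak{su}(n-1)$, to which the whole construction is anchored.
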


This theorem generalizes a result on $\CP^n$ (which is the case $(p,q)=(1,1)$ in the theorem above) from \cite{rueckriemen06}.

This paper is organized as follows: Section \ref{section:preliminaries} summarizes basic notions on orbifolds and some facts from the spectral geometry of compact Riemannian orbifolds.

In Section \ref{section:torus-method} we generalize results from \cite{MR1895349} to orbifolds. The basic idea of this so-called 
torus method (which, in a different form, was first used in \cite{MR1298201}) is that the existence of related isometric actions of 
a fixed torus on two Riemannian orbifolds implies under very special conditions that these two orbifolds are isospectral. We also 
point out how the criterion for nonisometry in \cite{MR1895349} generalizes to the orbifold case.

In Section \ref{section:examples} we introduce (with $n,p,q$ as in the theorem above) our weighted projective spaces 
$\O(p,q)=S^{2n+1}/S^1$ with the action given by $\sigma(u,v)=(\sigma^pu,\sigma^q v)$ for $\sigma\in S^1\subset\C$, $u\in\C^{n-1}$, 
$v\in\C^2$. We then fix $n,p,q$ and use the results from Section \ref{section:torus-method} together with other ideas from 
\cite{MR1895349} to obtain families of isospectral metrics on the orbifold $\O(p,q)$. For the impatient reader 
Section~\ref{subsubsection:examples-families} contains an alternative isospectrality proof independent of 
Section~\ref{section:torus-method} which also implies our main result but applies only to the case of isospectral families and hence 
misses some potential isospectral pairs. Eventually, in Section \ref{subsection:nonisometry} we show that the resulting metrics are 
(under certain conditions) nonisometric, thus establishing our main theorem above. Moreover, inspired by \cite{sutton10}, we give 
isospectral metrics on quotients of our weighted projective spaces by certain finite groups in Section \ref{subsection:sutton}.

\subsubsection*{Acknowledgements} This work is a condensed version of my Ph.D. thesis at the Humboldt-Universit\"at zu Berlin and I 
am indebted to my supervisor Dorothee Sch\"uth. Without her foresight this project never would have come into being and without her 
unceasing guidance and curiosity it could not have been finished. I would also like to thank Emily Dryden, Alexander Engel, Luis 
Guijarro and the referees for helpful suggestions.

\section{Orbifold preliminaries}
\label{section:preliminaries}
The concept of an orbifold was introduced by Satake in \cite{MR0079769} and popularized by Thurston (\cite{thurston}). We basically 
follow Satake's definition, also compare \cite{MR1950941, MR2012261, weilandt07} for basic introductions to orbifolds. However, 
since there seems to be no standard reference for orbifolds from the point of view of differential geometry, we will summarize basic 
notions and results which are necessary for the constructions in the following sections. For an extended version of this section 
with detailed proofs see \cite{weilandt10}. 

\subsection{Basics}
\label{subsection:basics}
An $n$-dimensional orbifold \emph{chart} on a topological space $X$ is given by a tuple $(U,\widetilde{U}/\Gamma,\pi)$ where $U$ is 
an open connected subset of the \emph{underlying space} $X$, $\widetilde{U}$ is a connected $n$-dimensional smooth manifold and 
$\Gamma$ is a finite group acting smoothly and effectively on $\widetilde{U}$. $\pi$ is a continuous map $\widetilde{U}\to U$ which 
induces a homeomorphism $\widetilde{U}/\Gamma\to U$. Two $n$-dimensional charts $(U_i,\widetilde{U}_i/\Gamma_i,\pi_i)$, $i=1,2$ on 
the same space $X$ are called compatible if for every $x\in U_1\cap U_2$ there is an $n$-dimensional orbifold chart 
$(U,\widetilde{U}/\Gamma,\pi)$ on $X$ such that $x\in U\subset U_1\cap U_2$ and there are smooth embeddings (so-called 
\emph{injections}) $\lambda_1\co \widetilde{U}\to \widetilde{U}_1$, $\lambda_2\co \widetilde{U}\to \widetilde{U}_2$ satisfying 
$\pi_1\circ\lambda_1=\pi=\pi_2\circ\lambda_2$. A covering of $X$ by compatible charts is called an orbifold 
\emph{atlas}. An \emph{orbifold} is then a pair $\O=(X,\mathfrak{A})$ of a second-countable Hausdorff space $X$ and a maximal atlas 
$\mathfrak{A}$ on $X$. If $\O$ is connected, the dimension of $\O$ is by definition given by the dimension of the manifolds 
$\widetilde{U}$ appearing in charts on $\O$.

The \emph{isotropy} of a point $x\in\O$ is the isomorphism class of the stabilizer $\Gamma_{\tilde{x}}$, where 
$(U,\widetilde{U}/\Gamma,\pi)$ is an arbitrary chart around $x$ and $\tilde{x}\in\pi^{-1}(x)$. It is not hard to show that the 
compatibility conditions above imply that the isotropy is well-defined. Points with trivial isotropy are called \emph{regular}, 
nonregular points are called \emph{singular}. It is well-known that in every connected orbifold $\O$ the set of regular points 
(which we will denote by $\O^\reg$) is a connected manifold which is open and dense in $\O$.

A \emph{smooth map} between two orbifolds $\O_1$, $\O_2$ is a continuous map $f\co \O_1\to\O_2$ between the underlying topological 
spaces such that for every $x\in \O_1$ there is a chart $(U_1,\widetilde{U}_1/\Gamma_1,\pi_1)$ on $\O_1$ around $x$, a chart 
$(U_2,\widetilde{U}_2/\Gamma_2,\pi_2)$ on $\O_2$ around $f(x)$ and a pair $(\tilde{f},\Theta)$ consisting of a smooth map 
$\tilde{f}\in C^\infty(\widetilde{U}_1,\widetilde{U}_2)$ and a homomorphism $\Theta\co \Gamma_1\to\Gamma_2$ such that 
$\pi_2\circ\tilde{f}=f\circ\pi_1$ and $\tilde{f}(\gamma\tilde{y})=\Theta(\gamma)\tilde{f}(\tilde{y})
\text{ for }\tilde{y}\in\widetilde{U}_1, \gamma\in\Gamma_1$.
A smooth map $f$ where the local lifts $\tilde{f}$ can always be chosen to be submersions is called a \emph{submersion} between 
orbifolds.

Let $\mathfrak{A}$ be a not necessarily maximal atlas on an orbifold $\O$. An \emph{$(r,s)$-tensor field associated with 
$\mathfrak{A}$} is given by a family $\tau=(\tau_\pi)_{\pi\in \mathfrak{A}}$, where for each chart $(U,\widetilde{U}/\Gamma,\pi)$ 
(which we also denote by $\pi$ for short) in $\mathfrak{A}$ the associated element $\tau_\pi$ is a $\Gamma$-invariant $(r,s)$-tensor 
field on $\widetilde{U}$. Moreover, $\tau$ has to satisfy the following compatibility condition: Given charts 
$(U_i,\widetilde{U}_i/\Gamma_i,\pi_i)$, $i=1,2$, in $\mathfrak{A}$ and $x\in U_1\cap U_2$, there is a chart 
$(U,\widetilde{U}/\Gamma,\pi)$ on $\O$ (which need not be in $\mathfrak{A}$) satisfying $x\in U\subset U_1\cap U_2$ together with 
injections $\lambda_1,\lambda_2$ from $\pi$ into $\pi_1$ and $\pi_2$, respectively, such that 
$\lambda_1^*\tau_{\pi_1}=\lambda_2^*\tau_{\pi_2}$ on $\widetilde{U}$.
A tensor field on the maximal atlas of $\O$ is called a \emph{tensor field on $\O$}. It can be shown that a tensor field on an 
arbitrary atlas $\mathfrak{A}$ on $\O$ induces a unique tensor field on $\O$. If $\tau$ is a tensor field on $\O$, we set 
$\tau_\reg:=\tau_\pi$ for $\pi$ given by $(\O^\reg,\O^\reg/\{\id_{\O^\reg}\},\id_{\O^\reg})$. 

A $(1,0)$-tensor field is then called a \emph{vector field} on $\O$ and a $(0,2)$-tensor field consisting of Riemannian metrics is a 
\emph{Riemannian metric} on $\O$. Given a smooth real-valued function $f$ and a Riemannian metric on $\O$, we denote by $\grad f$ 
the vector field on $\O$ given in each chart $\pi$ by the gradient of $f\circ\pi$ with respect to the given metric. Note that given 
vector fields $X^1,\dots,X^k$ and a $(0,k)$-tensor field $\tau$ on $\O$, we can set $f_\pi:=\tau_\pi(X^1_\pi,\dots,X^k_\pi)\in 
C^\infty(\widetilde{U})^\Gamma$ for every chart $(U,\widetilde{U}/\Gamma,\pi)$ on $\O$. Patching the $\ol{f_\pi}\in C(U)$ induced by 
$f_\pi$ together, we obtain a well-defined smooth function $f$ on $\O$ which we will denote by $\tau(X^1,\dots,X^k)$. Given a 
smooth map $f\co \O_1\to\O_2$ between orbifolds and an arbitrary $(0,k)$-tensor field $\tau$ on $\O_2$, the \emph{pull-back} 
$f^*\tau$ as a $(0,k)$-tensor field on $\O_1$ can be defined using the pull-backs
of the components of $\tau$ via local lifts $\tilde{f}$ of $f$. In particular, this applies to a $(0,k)$-tensor field consisting of 
$k$-forms, which we will call \emph{$k$-form on $\O$}.

To integrate on a compact Riemannian orbifold we first introduce \emph{densities} on orbifolds. Let $\O=(X,\mathfrak{A})$ be an 
$n$-dimensional orbifold. In analogy to the case of $n$-forms, a density is given by a family 
$\mu=\{\mu_\pi\}_{\pi\in\mathfrak{A}}$, where for each chart $(U,\widetilde{U}/\Gamma,\pi)$ in $\mathfrak{A}$ the associated element 
$\mu_\pi$ is a $\Gamma_\pi$-invariant density on $\widetilde{U}$. Moreover, we assume that $\mu$ satisfies a compatibility condition 
analogous to the one for tensor fields. Given a density $\mu$ on a compact orbifold $\O$, we can define the integral of $\mu$ over 
$\O$ by
\begin{equation*}
\int_\O \mu := \sum_i \frac{1}{|\Gamma_i|}\int_{\widetilde{U}_i}(\psi_i\circ{\pi_i}) \mu_{\pi_i},
\end{equation*}
where $\{(U_i,\widetilde{U}_i/\Gamma_i,\pi_i)\}_i$ is a finite atlas of $\O$ and $\{\psi_i\}$ is a smooth partition of unity on $\O$ 
such that each $\psi_i\in C^\infty(\O)$ has support in $U_i$. It can be shown that for a diffeomorphism $F\co \O_1\to\O_2$ the 
respective formula for densities on manifolds implies $\int_{\O_1}F^*\mu=\int_{\O_2}\mu$ for every density $\mu$ on $\O_2$.

Given a Riemannian metric $g$ on $\O$, note that for every chart $(U,\widetilde{U}/\Gamma,\pi)$ in $\mathfrak{A}$ the Riemannian 
metric $g_\pi$ defines the Riemannian density $\dvol_{g_\pi}$ on the manifold $\widetilde{U}$. The density 
$\dvol_g:=\{\dvol_{g_\pi}\}_{\pi\in\mathfrak{A}}$ is called the \emph{Riemannian density} on $(\O,g)$. Given a smooth function $f$ 
on $\O$, we can define the integral of $f$ over $\O$ by 
\[\int_\O f:=\int_\O f\dvol_g.\]

We now assume that we are given a compact connected Lie group $G$ acting smoothly and effectively on a manifold $M$ such that the 
stabilizer of every point in $M$ is finite. Denote the canonical projection by $P\co M\to M/G$. Using foliation theory, it can be 
shown that under these conditions the quotient $M/G$ carries a canonical orbifold structure whose restriction to $(M/G)^\reg$ is 
given by the usual manifold structure on the free quotient of points in $M$ with trivial $G$-stabilizers (\cite{MR2012261,MR932463}).

The quotient map $P\co M\to M/G$ becomes a submersion for this or\-bi\-fold structure on $M/G$ and the isotropy of a point in $M/G$ is 
given by the $G$-stabilizer of an arbitrary preimage by $P$. The pull-back $P^*$ gives an isomorphism between $(0,k)$-tensor fields 
on the orbifold $M/G$ and $G$-horizontal $G$-invariant $(0,k)$-tensor fields on $M$. Moreover, given a $G$-invariant Riemannian 
metric $g$ on $M$, we can canonically identify $G$-invariant vector fields on $M$ which are $G$-horizontal with respect to $g$ with 
vector fields on $M/G$. This isomorphism is unique in the sense that it is the unique extension of the usual isomorphism for the 
manifold case given by the differential of the manifold submersion $P_{|M_G}\co  M_G\to (M/G)^\reg$ (where $M_G:=\{x\in 
M;~G_x=\{\id\}\}=P^{-1}((M/G)^\reg)$).

A \emph{Riemannian submersion} is by definition a submersion $f$ between two Riemannian orbifolds such that the local lifts 
$\tilde{f}$ can be chosen to be Riemannian submersions with respect to the given metrics. It can be shown that in the situation of 
the paragraph above, given a $G$-invariant Riemannian metric $g$ on $M$, there is a unique Riemannian metric (the so-called 
\emph{submersion metric}) $g^G$ on $M/G$ such that the canonical projection $P\co (M,g)\to (M/G,g^G)$ becomes a Riemannian 
submersion.

To define \emph{fundamental vector fields} on orbifolds suppose we are given a Lie group $G$ acting smoothly and effectively on an 
orbifold $\O$, denote the action by $\phi\co G\times \O\to\O$ and let $X\in T_eG$ be an element of the Lie algebra of $G$. We define 
a vector field $X^*$ on $\O$ in the following way: Let $x\in \O$. Since $\phi$ is smooth, there are charts 
$(U,\widetilde{U}/\Gamma,\pi)$ and $(U^\prime,\widetilde{U}^\prime/\Gamma^\prime,\pi^\prime)$ of $\O$ over $x$, an open 
neighbourhood $W$ of $e$ in $G$ and a smooth map $\tilde\phi\co W\times \widetilde{U}\to\widetilde{U}^\prime$ such that 
$\pi^\prime\circ\tilde{\phi}=\phi\circ(\id_W,\pi)$. By choosing $U$ sufficiently small around $x$, we can assume that 
$h:=\tilde\phi(e,\cdot)\co \widetilde{U}\to\widetilde{U}^\prime$ is an embedding. Denote the inverse $\widetilde{U}^\prime\supset 
h(\widetilde{U})\to\widetilde{U}$ by $h^{-1}$. Now recall that we had fixed $X\in T_eG$ and define a vector field $\sigma_x(X)$ on 
$\widetilde{U}$ by $\sigma_x(X)({\tilde{y}}):=\frac{d}{dt}_{|t=0}h^{-1}(\tilde\phi(\exp(tX),{\tilde{y}}))$, where $\exp$ denotes the 
Lie group exponential map. It can be shown that the vector fields $\{\sigma_x(X)\}_{x\in\O}$ satisfy the compatibiliy conditions for 
orbifold vector fields and hence induce a unique vector field on $\O$ which we will denote by $X^*$ and call the fundamental vector 
field associated with $X$.

\subsection{The Laplace spectrum}
\label{subsection:orbispec}
Given a Riemannian orbifold $(\O,\langle,\rangle)$, it is possible to generalize the Laplace operator from the manifold case by setting $\Delta f(x):=\widetilde{\Delta}(f\circ\pi)(\tilde{x})$, where $x\in\O$, $(U,\widetilde{U}/\Gamma,\pi)$ is a chart around $x$, $\tilde{x}\in\pi^{-1}(x)$, $f\in C^\infty(\O)$ and $\widetilde{\Delta}$ is the Laplace operator on the Riemannian manifold $(\widetilde{U},\langle,\rangle_\pi)$. The spectrum of the Laplacian on compact orbifolds was first
investigated by Donnelly (\cite{MR537804}). He proved the following theorem for
good orbifolds which was later generalized to arbitrary orbifolds by Chiang (\cite{MR1089240}), also compare \cite{MR2433665}.

\begin{theorem}
\label{theorem:eigenspaces}
Let $(\O,\langle,\rangle)$ be a compact Riemannian orbifold. Then
every eigenvalue of $\Delta$ on $C^\infty(\O)$ has finite
multiplicity and the spectrum $\spec(\O)$ of $\Delta$ consists of a sequence
$0=\mu_0\le\mu_1\le\mu_2\le\cdots$, where
$\mu_i\to\infty$.
Moreover, there is an orthonormal basis $\{\phi_i\}_{i\ge 0}\subset
C^\infty(\O)$ of the Hilbert space
$L^2(\O,\langle,\rangle)$ such that $\Delta\phi_i=\mu_i\phi_i$.
\end{theorem}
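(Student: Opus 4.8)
The plan is to reduce the statement to the standard spectral theory of the Laplacian on compact manifolds by working chart by chart and exploiting $\Gamma$-invariance. First I would fix a finite atlas $\{(U_i,\tilde U_i/\Gamma_i,\pi_i)\}_i$ of $\O$ together with a subordinate smooth partition of unity $\{\psi_i\}$, and recall that $L^2(\O)$ is the Hilbert space completion of $C^\infty(\O)$ with respect to $\langle f,h\rangle_{L^2}:=\int_\O f\,h$, the integral being defined via the Riemannian density $\dvol_g$. Under $\pi_i$, smooth functions on $U_i$ correspond exactly to $\Gamma_i$-invariant smooth functions on $\tilde U_i$, and by definition the orbifold Laplacian $\Delta$ corresponds in this chart to the manifold Laplacian $\tilde\Delta$ of $(\tilde U_i,\langle,\rangle_{\pi_i})$ acting on the $\Gamma_i$-invariant subspace. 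Since $\Gamma_i$ acts by isometries, $\tilde\Delta$ commutes with the $\Gamma_i$-action and hence preserves this subspace; this is the key structural fact that lets the usual local elliptic theory descend to the orbifold.

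Concretely, I would proceed in three steps. (i) Show that $\Delta$ is symmetric and nonnegative on $C^\infty(\O)$: using the partition of unity I reduce the global Green's formula $\int_\O (\Delta f)\,h=\int_\O\langle\grad f,\grad h\rangle$ to the chart level, where it becomes the ordinary integration-by-parts identity for the $\Gamma_i$-invariant functions $f\circ\pi_i,\,h\circ\pi_i$ on the open manifold $\tilde U_i$; the relevant integrands are compactly supported inside each $\tilde U_i$, so no boundary contributions arise, and the measure-zero singular set plays no role. (ii) Define the Sobolev spaces $H^s(\O)$ through the charts and the partition of unity, and transport the local Gårding inequality and the Rellich compactness theorem from the $\tilde U_i$ to $\O$ by restriction to the invariant subspaces, obtaining that the inclusion $H^1(\O)\hookrightarrow L^2(\O)$ is compact. (iii) Take the Friedrichs extension of $\Delta$ to a nonnegative self-adjoint operator on $L^2(\O)$ with form domain $H^1(\O)$.

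With these in place the conclusion is standard: the resolvent $(\Delta+1)^{-1}$ factors through the compact inclusion $H^1(\O)\hookrightarrow L^2(\O)$ and is therefore a compact self-adjoint operator, so the spectral theorem yields an orthonormal basis $\{\phi_i\}_{i\ge 0}$ of $L^2(\O)$ consisting of eigenfunctions, with eigenvalues $\mu_i$ of finite multiplicity satisfying $\mu_i\to\infty$, and nonnegativity gives $\mu_i\ge 0$. Smoothness of the $\phi_i$ then follows from elliptic regularity applied in each chart: an $L^2$-eigenfunction lifts to a $\Gamma_i$-invariant weak eigenfunction of the uniformly elliptic operator $\tilde\Delta$ on $\tilde U_i$ and is therefore smooth by bootstrapping, so $\phi_i\in C^\infty(\O)$.

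The main obstacle is not any single deep theorem but the careful patching across charts in the presence of the singular set: one must check that the chart-level Sobolev norms, Gårding constants and the Rellich embedding combine into genuine global statements on $\O$, and that restricting every estimate to the $\Gamma_i$-invariant subspace—on which $\tilde\Delta$ remains uniformly elliptic—loses nothing. Once it is verified that the invariant functions form a closed $\tilde\Delta$-invariant subspace to which the manifold elliptic package applies verbatim, the remainder is routine functional analysis.
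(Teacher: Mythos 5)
Your argument is sound, but note that the paper does not prove this theorem at all: it is quoted as a known result, attributed to Donnelly \cite{MR537804} for good orbifolds and to Chiang \cite{MR1089240} for arbitrary compact orbifolds (with \cite{MR2433665} as a further reference). So there is no internal proof to match; what you have written is essentially a reconstruction of the argument in the cited literature, closest in spirit to Chiang's: build Sobolev spaces chart by chart on the $\Gamma_i$-invariant functions, patch Green's formula, Rellich compactness and elliptic regularity through a finite atlas and partition of unity, take the Friedrichs extension, and apply the spectral theorem to the compact resolvent. By contrast, \cite{MR2433665} proceeds by a genuinely different route --- constructing the heat kernel on the orbifold by patching $\Gamma_i$-averaged local heat kernels and reading off the spectral resolution from the compact semigroup --- and Donnelly exploits the global covering manifold available in the good case. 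Your route has the advantage of being self-contained and of producing, along the way, exactly the objects the paper needs afterwards: the space $H^1(\O,\langle,\rangle)$ and the variational characterization of Theorem \ref{theorem:variation}, which the paper derives from the present theorem together with Lemma \ref{lemma:green}. Two points in your sketch deserve to be nailed down explicitly: (1) Rellich compactness must be applied to functions with support in a \emph{fixed} compact subset of each (noncompact) $\tilde U_i$, which is what the partition of unity provides, followed by a diagonal extraction; (2) an $L^2$-eigenfunction of the Friedrichs extension is a priori a weak solution only against $\Gamma_i$-invariant test functions, and one needs the standard averaging argument (using $\Gamma_i$-invariance of the lift and of the metric) to test against all of $C_c^\infty(\tilde U_i)$ before invoking elliptic regularity --- ellipticity itself is a property of $\tilde\Delta$ on $\tilde U_i$, not of its restriction to the invariant subspace. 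Both are routine, so your proposal stands.
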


Two compact Riemannian orbifolds $\O_1$ and $\O_2$ are called
\emph{isospectral} if $\spec(\O_1)=\spec(\O_2)$ with multiplicities. From now on $\O$ will always denote a compact Riemannian orbifold.

We will need Green's Formula for orbifolds in its complex version. To this end given a 
Riemannian orbifold $(\O,\langle,\rangle)$, let $\Delta^\C$ and $\grad^\C$ denote the complexifications of $\Delta$ and $\grad$, respectively. Moreover, let $\langle,\rangle_\C$ denote the sesquilinear extension of $\langle,\rangle$ to complex-valued vector fields on $\O$. The following lemma then follows directly from the respective formula for smooth functions on manifolds with compact support.

\begin{lemma}
\label{lemma:green}
Let $(\O,\langle,\rangle)$ be a compact Riemannian orbifold and let $f_1,f_2\in C^\infty(\O,\C)$. Then
\[\int_\O f_1\ol{\Delta^\C f_2}=\int_\O \bigl\langle \grad^\C f_1,\grad^\C f_2\bigr\rangle_\C=\int_\O\ol{f_2}\Delta^\C f_1.\]
\end{lemma}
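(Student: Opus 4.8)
The plan is to reduce the orbifold identity to the classical Green formula on manifolds by localizing with a partition of unity, the point being that $\Delta^\C$, $\grad^\C$ and $\dvol_g$ are all defined chart by chart through the lifts. First I would observe that it suffices to establish the first equality $\int_\O f_1\ol{\Delta^\C f_2}=\int_\O\langle\grad^\C f_1,\grad^\C f_2\rangle_\C$: the third equality then follows by swapping the roles of $f_1$ and $f_2$ in the first, conjugating both sides, and using that $\langle,\rangle_\C$ is Hermitian (so that $\ol{\langle X,Y\rangle_\C}=\langle Y,X\rangle_\C$), which turns $\ol{\int_\O f_2\ol{\Delta^\C f_1}}$ into $\int_\O\ol{f_2}\Delta^\C f_1$ and the right-hand side back into $\int_\O\langle\grad^\C f_1,\grad^\C f_2\rangle_\C$.

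For the first equality I would fix a finite atlas $\{(U_i,\tilde U_i/\Gamma_i,\pi_i)\}_i$ of the compact orbifold together with a subordinate partition of unity $\{\psi_i\}$, chosen so that each $\psi_i\circ\pi_i$ has compact support in $\tilde U_i$. Using the definition of the orbifold integral together with the chart-wise identities $(\Delta^\C f_2)\circ\pi_i=\tilde\Delta^\C(f_2\circ\pi_i)$ and $\dvol_g{}_{|\pi_i}=\dvol_{g_{\pi_i}}$, I would write the left-hand side as $\sum_i\frac{1}{|\Gamma_i|}\int_{\tilde U_i}(\psi_i\circ\pi_i)(f_1\circ\pi_i)\ol{\tilde\Delta^\C(f_2\circ\pi_i)}\dvol_{g_{\pi_i}}$, and the right-hand side analogously with integrand $(\psi_i\circ\pi_i)\langle\tilde\grad^\C(f_1\circ\pi_i),\tilde\grad^\C(f_2\circ\pi_i)\rangle_\C$, using that the orbifold function $\langle\grad^\C f_1,\grad^\C f_2\rangle_\C$ is represented in the chart $\pi_i$ by exactly this expression.

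The core step is then the (complex, compactly supported) manifold Green formula applied in each $\tilde U_i$ to $a_i:=(\psi_i\circ\pi_i)(f_1\circ\pi_i)$ and $b_i:=f_2\circ\pi_i$, giving $\int_{\tilde U_i}a_i\ol{\tilde\Delta^\C b_i}\dvol=\int_{\tilde U_i}\langle\tilde\grad^\C a_i,\tilde\grad^\C b_i\rangle_\C\dvol$. Expanding $\tilde\grad^\C a_i$ by the Leibniz rule (with $\psi_i\circ\pi_i$ real) yields the desired main term $(\psi_i\circ\pi_i)\langle\tilde\grad^\C(f_1\circ\pi_i),\tilde\grad^\C b_i\rangle_\C$ plus an error term $(f_1\circ\pi_i)\langle\tilde\grad^\C(\psi_i\circ\pi_i),\tilde\grad^\C b_i\rangle_\C$. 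Summing the main terms with weights $1/|\Gamma_i|$ reconstitutes precisely the right-hand side, so everything hinges on the error terms cancelling.

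This cancellation is the main obstacle, and I would resolve it by reassembling the error sum globally. Since $(f_1\circ\pi_i)\langle\tilde\grad^\C(\psi_i\circ\pi_i),\tilde\grad^\C(f_2\circ\pi_i)\rangle_\C$ is the chart representation of the orbifold function $f_1\langle\grad^\C\psi_i,\grad^\C f_2\rangle_\C$, which is supported in $U_i$, the standard fact that a function supported in a single chart integrates as $\frac{1}{|\Gamma_i|}\int_{\tilde U_i}$ of its lift shows that the weighted error sum equals $\sum_i\int_\O f_1\langle\grad^\C\psi_i,\grad^\C f_2\rangle_\C=\int_\O f_1\langle\grad^\C(\sum_i\psi_i),\grad^\C f_2\rangle_\C$, and this vanishes because $\sum_i\psi_i\equiv1$ forces $\grad^\C(\sum_i\psi_i)=0$. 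The only points demanding genuine care beyond this telescoping are the bookkeeping with the factors $1/|\Gamma_i|$, the verification (through the compatibility conditions for tensor fields and densities) that the chart-local expressions really do represent the intended global objects, and securing compact support of the lifts, which uses the compactness of $\O$ and properness of the $\pi_i$ on the relevant supports.
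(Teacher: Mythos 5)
Your proposal is correct and follows essentially the same route as the paper: the paper's proof consists precisely of the remark that the lemma ``follows directly from the respective formula for smooth functions on manifolds with compact support,'' and your partition-of-unity localization, chart-wise application of the compactly supported Green formula, and cancellation of the Leibniz error terms via $\grad^\C\bigl(\sum_i\psi_i\bigr)=0$ is the natural fleshed-out implementation of that reduction. The auxiliary facts you invoke (properness of the chart projections, the single-chart integration formula) are exactly the standard well-definedness properties of the orbifold integral that the paper already assumes.
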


Note that --- as in the manifold setting --- this lemma implies that the eigenvalues of $\Delta^\C$ are real and each eigenspace of $\Delta^\C$ is just given by the complexification of the eigenspace of $\Delta$ associated with the same eigenvalue. In particular, $\{\phi_i\}_{i\ge0}$ from Theorem \ref{theorem:eigenspaces} also gives an orthonormal basis of the space of complex-valued $L^2$-functions, which we will denote by $L^2(\O,\langle,\rangle)$ from now on. Next consider $C^\infty(\O,\C)$ as a Pre-Hilbert-space with the sesquilinear inner product
\[(f_1,f_2)_1=\int_\O f_1\ol{f_2}+\int_\O\langle \grad^\C f_1,\grad^\C f_2\rangle_\C.\]

The (complex) Sobolev space $H^1(\O,\langle,\rangle)$ is the completion of $C^\infty(\O,\C)$ with respect to this inner product. The Rayleigh quotient $R\co H^1(\O,\langle,\rangle)\setminus\{0\}\to [0,\infty)$
is the unique continuous extension of the functional
\[C^\infty(\O,\C)\setminus\{0\}\ni f\mapsto \frac{\int \bigl\langle \grad^\C f, \grad^\C f\bigr\rangle_\C}{\int |f|^2}\in 
[0,\infty)\]
to $H^1(\O,\langle,\rangle)\setminus\{0\}$.
Theorem \ref{theorem:eigenspaces} in its complex form together with Lem\-ma~\ref{lemma:green} imply the following variational characterization. The proof is almost literally the same as in the manifold case (\cite[III.28]{MR861271}), also compare \cite[Lemma 6.3]{MR2155380}.

\begin{theorem}
\label{theorem:variation}
Let $\O$ be a compact Riemannian orbifold and let $L_k$ denote the set of all $k$-dimensional subspaces of $H^1(\O,\langle,\rangle)$. Then
\[\mu_k=\inf_{U\in L_k}\sup_{f\in U\setminus\{0\}}R(f).\]
\end{theorem}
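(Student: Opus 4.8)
The plan is to reproduce the classical Courant--Fischer argument, the only orbifold-specific inputs being Theorem \ref{theorem:eigenspaces} (existence of a smooth $L^2$-orthonormal eigenbasis $\{\phi_i\}$ with $\Delta^\C\phi_i=\mu_i\phi_i$) and Green's Formula (Lemma \ref{lemma:green}); once these are available the proof is formally identical to the manifold case, so I only indicate the structure.

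First I would reduce the Rayleigh quotient to a diagonal series. For smooth $f$, Green's Formula gives $\int_\O\langle\grad^\C f,\grad^\C f\rangle_\C=\int_\O\ol{f}\,\Delta^\C f$; applied to the $\phi_i$ this yields $(\phi_i,\phi_j)_1=(1+\mu_i)\delta_{ij}$, so that $\{(1+\mu_i)^{-1/2}\phi_i\}$ is orthonormal in $H^1(\O,\langle,\rangle)$. The key technical step is to show that this system is moreover \emph{complete} in $H^1$: since $C^\infty(\O,\C)$ is dense in $H^1$ and $\{\phi_i\}$ is complete in $L^2$, one checks that the eigenfunction expansion of a smooth function converges in the $H^1$-norm, and completeness for general $H^1$-elements follows by approximation. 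Granting this, every $f\in H^1$ has an expansion $f=\sum_i a_i\phi_i$ converging in $H^1$, and using $(\phi_i,\phi_j)_1=(1+\mu_i)\delta_{ij}$ together with $\int_\O\phi_i\ol{\phi_j}=\delta_{ij}$ one obtains
\[\int_\O|f|^2=\sum_i|a_i|^2,\qquad \int_\O\langle\grad^\C f,\grad^\C f\rangle_\C=\sum_i\mu_i|a_i|^2,\]
so that $R(f)=\big(\sum_i\mu_i|a_i|^2\big)\big/\big(\sum_i|a_i|^2\big)$ for all $f\in H^1\setminus\{0\}$, by continuity of $R$. This reduces the entire statement to a computation in $\ell^2$.

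With $R$ in this diagonal form I would prove the two inequalities separately. For the upper bound I test on the finite-dimensional subspace spanned by the eigenfunctions realizing the lowest eigenvalues up to $\mu_k$: on such a subspace the expansion involves only those indices, whence $R(f)\le\mu_k$ for every nonzero $f$ in it, giving $\inf_{U\in L_k}\sup_{f}R(f)\le\mu_k$. For the lower bound, given an arbitrary $U\in L_k$, I intersect it with the $L^2$-orthogonal complement of the span of the eigenfunctions lying strictly below $\mu_k$; a dimension count forces this intersection to contain a nonzero $f$, whose expansion involves only eigenvalues $\ge\mu_k$, so that $R(f)\ge\mu_k$ and hence $\sup_{f\in U}R(f)\ge\mu_k$. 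Combining the two inequalities yields the asserted identity.

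I expect the main obstacle to be the completeness of $\{\phi_i\}$ in $H^1$ and the resulting diagonal formula for $R$ on all of $H^1$, as opposed to only on $C^\infty(\O,\C)$: one must pass from the smooth setting, where Green's Formula applies directly, to arbitrary Sobolev functions, controlling the gradient term in the limit. On manifolds this is standard elliptic theory; in the orbifold setting it rests on Theorem \ref{theorem:eigenspaces}, Lemma \ref{lemma:green} and the continuity of $R$ built into the definition of the Rayleigh quotient, all of which are in place here, so the argument carries over without essential change.
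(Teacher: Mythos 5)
Your proposal follows exactly the route the paper itself takes: the paper gives no independent argument for this theorem but invokes the classical Courant--Fischer proof from the manifold case (\cite[III.28]{MR861271}), remarking that Theorem \ref{theorem:eigenspaces} and Lemma \ref{lemma:green} supply the only inputs needed for it to carry over to orbifolds. Your sketch is precisely that argument, and you correctly isolate the one step that is not purely formal, namely the completeness of $\{\phi_i\}$ in $H^1$ and the resulting diagonal form of $R$ on all of $H^1(\O,\langle,\rangle)$ rather than just on $C^\infty(\O,\C)$.

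There is, however, one concrete mismatch, which concerns the indexing and which your dimension counts silently paper over. With the paper's convention $0=\mu_0\le\mu_1\le\mu_2\le\ldots$ and $L_k$ the set of $k$-dimensional subspaces, your two inequalities do not refer to the same class of subspaces: the upper-bound test space $\spann\{\phi_0,\ldots,\phi_k\}$ (eigenfunctions ``up to $\mu_k$'') has dimension $k+1$, so it does not lie in $L_k$; and in the lower bound, when $\mu_{k-1}<\mu_k$ there are exactly $k$ indices $i$ with $\mu_i<\mu_k$, so intersecting a $k$-dimensional $U$ with the $L^2$-orthocomplement of $\spann\{\phi_0,\ldots,\phi_{k-1}\}$ need not yield a nonzero vector --- for $U=\spann\{\phi_0,\ldots,\phi_{k-1}\}$ the intersection is $\{0\}$, and indeed $\sup_{f\in U\setminus\{0\}}R(f)=\mu_{k-1}<\mu_k$. (Most simply: for $k=1$ the constants form a one-dimensional subspace on which $R\equiv 0$, so $\inf_{U\in L_1}\sup R=\mu_0=0$, not $\mu_1$.) So what your argument actually proves is $\mu_k=\inf_{U\in L_{k+1}}\sup_{f\in U\setminus\{0\}}R(f)$, equivalently $\mu_{k-1}=\inf_{U\in L_k}\sup_{f\in U\setminus\{0\}}R(f)$. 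This off-by-one is inherited from the paper's own statement, which transcribes the min-max formula (stated in \cite{MR861271} for eigenvalues indexed from $1$) without shifting to its indexing from $0$; your proof is the correct argument for the corrected statement, and to make it airtight you need only record the dimensions of your test spaces explicitly and note that the number of eigenvalues strictly below $\mu_k$ is at most $k$, which is strictly less than $\dim U$ precisely when $\dim U=k+1$.
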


As in the manifold case it can be shown that the spectrum determines the volume, dimension and other geometric properties of an orbifold (\cite{MR2494312,MR1821378}). In order to investigate which properties are not determined by the spectrum, one needs constructions of isospectral (but nonisometric) orbifolds. There are various constructions of isospectral manifolds (see \cite{MR1736857} for an overview), but in the next paragraph we shall briefly summarize only those which have already been generalized to get examples of isospectral singular orbifolds.

Sunada's Theorem (\cite{MR782558}) in its orbifold version by B\'erard (\cite{MR1152950}) was the first construction of isospectral singular orbifolds and was used in \cite{MR1181812} to give examples of isospectral plane domains and in \cite{MR2263484} to construct arbitrarily large families of isospectral orbifolds with pairwise nonisomorphic biggest isotropy groups. Both the Sunada Theorem and an explicit formula for eigenvalues on flat orbifolds (\cite{MR1861302}) can even be used to construct pairs of isospectral orbifolds in which the maximal orders of isotropy groups are different (\cite{MR2447904}). More intricate generalizations of the Sunada Theorem were used in \cite{sutton10} and \cite{MR2761691} to give continuous families of isospectral singular orbifolds. Besides, \cite{MR2802590} generalized results from \cite{MR597742} to construct isospectral orbifold lens spaces.

However, all pairs of isospectral orbifolds above are good. More precisely, they are either of the form $M/\Gamma_1, M/\Gamma_2$ with $\Gamma_i$ discrete subgroups of the isometry group of a Riemannian manifold $M$ or (in the case of \cite{sutton10}) $M_1/\Gamma_1, M_2/\Gamma_2$ with $M_1,M_2$ isospectral compact Riemannian manifolds and each $\Gamma_i$ a finite subgroup of the isometry group of $M_i$. It has been shown in \cite[Proposition 3.4(ii)]{MR2044174} that the first type cannot give an isospectral pair of a Riemannian manifold and a singular orbifold. An analogous argument (also using the heat kernel expansion from \cite{MR2433665}, see \cite{sutton10,weilandt10}) shows that the second type cannot yield such a pair, either.

These observations are the basis for our interest in the spectral geometry of bad orbifolds. The only obvious way to construct isospectral bad orbifolds using known constructions would be to take a pair of good isospectral orbifolds $\O_1,\O_2$ (which can, of course, be manifolds) and a bad orbifold $\O$. Then the Riemannian products $\O_1 \times \O$ and $\O_2\times \O$ are isospectral bad orbifolds. However, in Section \ref{section:examples} we will present the first examples of isospectral bad orbifolds which cannot be written as nontrivial products.

Note that it would be pointless to apply obstructions to the isospectrality of a pair of a manifold and a singular orbifold to the examples in Section \ref{section:examples}, since our isospectral pairs and families are always diffeomorphic by definition. Also note that such obstructions in general do not apply to the case of orbifolds with boundary as it is actually possible to construct a Dirichlet-isospectral pair of a singular orbifold and a manifold (\cite{herbrich11}). For more results on the spectral geometry of (closed) orbifolds see \cite{MR2433665,MR1821378} and the references therein.

\section{The torus method for orbifolds}
\label{section:torus-method}
In this section we generalize the so-called torus method from \cite{MR1895349} to orbifolds.

\subsection{Isospectral metrics}
Let $T$ be a torus (i.e., a nontrivial compact connected abelian Lie group) acting effectively and smoothly on a connected orbifold $\O$. Recall that $\O^\reg$ is connected, open and dense in $\O$. It is also obviously $T$-invariant. Since $T$ is abelian and acts effectively, the (not necessarily connected) manifold $\O_T^\reg:=\{x\in \O^\reg;~G_x=\{\id\}\}$ is open and dense in $\O^\reg$ and hence in $\O$. Given an orbifold metric $g$ on $\O$ we also write $g$ for the induced (manifold) metric on $\O^\reg$ and its submanifolds. Note that in the theorem below we do not assume $\O$ to be oriented and $\dvol_g$ stands for the Riemannian density on $(\O,g)$. In the corresponding proof and later on we also need the following notation: Let $\mathfrak{t}:=T_eT$ denote the Lie algebra of $T$. Setting $\mathcal{L}:=\ker(\exp\co \mathfrak{t}\to T)$, we observe that $\exp$ induces an isomorphism from $\mathfrak{t}/\mathcal{L}$ to $T$. We will write $\mathcal{L}^*:=\{\phi\in\mathfrak{t}^
 *;~\phi(x)\in\Z~\forall X\in\mathcal{L}\}$ for the dual lattice.

\begin{theorem}
\label{theorem:torusaction}
Let $T$ be a torus acting effectively and isometrically on two compact connected Riemannian orbifolds $(\O,g)$ and $(\O^\prime, g^\prime)$. Set $\widehat{\O}=\O_T^\reg$, $\widehat{\O}^\prime={\O^\prime}_T^\reg$. Assume that for every subtorus $W\subset T$ of codimension $1$ there is a $T$-equivariant diffeomorphism $F_W\co \O\to \O^\prime$ satisfying $F_W^*\dvol_{g^\prime}=\dvol_g$ which induces an isometry between the manifolds $(\widehat{\O}/W,g^W)$ and $(\widehat{\O}^\prime/W,{g^\prime}^W)$. Then the orbifolds $(\O,g)$ and $(\O^\prime,g^\prime)$ are isospectral.
\end{theorem}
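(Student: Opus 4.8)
The plan is to diagonalize the problem along the characters of the torus $T$ and then to transplant eigenfunctions between $\O$ and $\O'$ using the maps $F_W$, one character at a time. Since $T$ acts on $L^2(\O,g)$ (and on $L^2(\O',g')$) by isometries commuting with $\Delta^\C$, Fourier analysis on the torus yields an orthogonal Hilbert sum decomposition $L^2(\O,g)=\bigoplus_\mu H_\mu$ indexed by the character lattice of $T$, where $t\in T$ acts on $H_\mu$ as multiplication by $\mu(t)$; the projection onto $H_\mu$ is the smoothing operator $f\mapsto\int_T\ol{\mu(t)}\,(t\cdot f)\,dt$, so it preserves $C^\infty(\O,\C)$ and $H^1(\O,g)$. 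Because $\Delta^\C$ commutes with the $T$-action it preserves each $H_\mu$, so by Theorem \ref{theorem:eigenspaces} and restricted min--max (Theorem \ref{theorem:variation} applied inside the reducing subspace $H_\mu$) the spectrum $\spec(\O,g)$ is the disjoint union over $\mu$ of $\spec(\Delta^\C|_{H_\mu})$, and likewise for $\O'$. Hence it suffices to prove $\spec(\Delta^\C|_{H_\mu})=\spec({\Delta'}^\C|_{H'_\mu})$ for every $\mu$.

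Fix $\mu$. For $\mu=0$ pick any codimension-$1$ subtorus $W$; for $\mu\ne 0$ the image of $\mu\co T\to S^1$ is all of $S^1$, so $W:=(\ker\mu)^0$ is a subtorus of codimension $1$ contained in $\ker\mu$. In either case every $f\in H_\mu$ is $W$-invariant and descends to a function $\ol f$ on $\widehat\O/W$. Take the diffeomorphism $F_W\co\O\to\O'$ from the hypothesis. As a $T$-equivariant diffeomorphism of orbifolds it preserves isotropy and $T$-stabilizers, hence restricts to a diffeomorphism $\widehat\O\to\widehat\O'$ and descends to the isometry $\ol{F_W}\co(\widehat\O/W,g^W)\to(\widehat\O'/W,{g'}^W)$. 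By equivariance $F_W^*$ maps $H'_\mu$ into $H_\mu$ (with inverse $(F_W^{-1})^*$, so bijectively), and by $F_W^*\dvol_{g'}=\dvol_g$ together with the change-of-variables formula for densities it preserves the $L^2$-norm, $\int_\O|F_W^*f'|^2\dvol_g=\int_{\O'}|f'|^2\dvol_{g'}$.

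The crux is to show that $F_W^*$ also preserves the Dirichlet energy $E(f)=\int_\O\langle\grad^\C f,\grad^\C f\rangle_\C$. All integrands here are $T$-invariant and the complement of $\widehat\O$ in $\O$ has measure zero (it lies in a union of lower-dimensional fixed-point sets and singular strata), so I may compute on $\widehat\O$. Since $f\in H_\mu$ is $W$-invariant, $\grad^\C_g f$ is orthogonal to the $W$-orbits; as $p\co(\widehat\O,g)\to(\widehat\O/W,g^W)$ is a Riemannian submersion, $\grad^\C_g f$ is the horizontal lift of $\grad^\C_{g^W}\ol f$ and therefore $|\grad^\C_g f|^2_g=p^*\big(|\grad^\C_{g^W}\ol f|^2_{g^W}\big)$. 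Writing $f=F_W^*f'$ gives $\ol f=\ol{F_W}^*\,\ol{f'}$, and because $\ol{F_W}$ is an isometry, $|\grad^\C_{g^W}\ol f|^2_{g^W}=\ol{F_W}^*\big(|\grad^\C_{{g'}^W}\ol{f'}|^2_{{g'}^W}\big)$. Using $\ol{F_W}\circ p=p'\circ F_W$ to pull these identities back to $\widehat\O$ yields $|\grad^\C_g f|^2_g=F_W^*\big(|\grad^\C_{g'}f'|^2_{g'}\big)$, and integrating against $\dvol_g=F_W^*\dvol_{g'}$ gives $E(f)=\int_{\O'}|\grad^\C_{g'}f'|^2_{g'}\dvol_{g'}$. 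The main obstacle is exactly this chain: correctly splitting off the $W$-direction, invoking the Riemannian-submersion gradient relation, and checking that the two hypotheses (isometry on the $W$-quotient and volume preservation on all of $\O$) combine to cancel the unknown pointwise variation of the $W$-fibre volume, which is what prevents $F_W$ itself from being an isometry.

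Finally, $F_W^*$ is a linear bijection $H'_\mu\to H_\mu$ preserving both $\int|f|^2$ and $E(f)$ on smooth functions, so it extends to an isomorphism of the $H^1$-closures of these isotypic components that preserves the Rayleigh quotient. Restricted min--max (Theorem \ref{theorem:variation}) then gives $\spec(\Delta^\C|_{H_\mu})=\spec({\Delta'}^\C|_{H'_\mu})$, and since each eigenvalue has finite total multiplicity by Theorem \ref{theorem:eigenspaces}, summing over $\mu$ proves that $(\O,g)$ and $(\O',g')$ are isospectral.
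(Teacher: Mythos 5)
Your proof is correct and is essentially the paper's own argument: the isotypic decomposition of functions under the $T$-action, transplantation of the $W$-invariant functions by $F_W^*$ for the codimension-one subtorus $W$ attached to each character, preservation of $L^2$-norms via $F_W^*\dvol_{g'}=\dvol_g$ and of Dirichlet energy via the Riemannian-submersion gradient identity combined with the hypothesis that $F_W$ induces an isometry $(\widehat{\O}/W,g^W)\to(\widehat{\O}'/W,{g'}^W)$, and the conclusion via Theorem \ref{theorem:variation}. The only difference is organizational: the paper glues the maps ${F_W^*}|_{E_W'}$ over all subtori $W$ (plus one choice for the $T$-invariant part $H_0$) into a single $L^2$-norm-preserving Hilbert-space isometry $H^1(\O',g')\to H^1(\O,g)$ and invokes the variational characterization once globally, whereas you compare spectra character by character, which tacitly relies on the standard (true, but not proved in the paper) facts that the spectrum is the multiplicity-union of the spectra of the restrictions $\Delta^\C|_{H_\mu}$ and that the min--max principle localizes to each reducing subspace $H_\mu$.
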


\begin{proof}
  Consider the Sobolev spaces $H:=H^1(\O,g)$ and $H^\prime:=H^1(\O^\prime,g^\prime)$. One can construct an isometry $H^\prime\to H$ preserving $L^2$-norms in the same way as has been done in the proof of \cite[Theorem 1.4]{MR1895349} for the manifold setting: 

Consider the unitary representation of $T$ on $H$ given by $(zf)(x):= f(zx)$ for $z\in T$, $f\in H$, $x\in \O$. Then the  $T$-module $H$ can be written as the Hilbert sum $H=\bigoplus_{\mu\in{\mathcal L}^*}H_\mu$
of $T$-modules \[H_\mu=\bigl\{f\in H;~[Z] f=e^{2\pi i \mu(Z)}f~\forall Z\in {\mathfrak t}\bigr\}.\] 
For each subtorus $W$ of $T$ of codimension 1 set
\[E_W:=\bigoplus_{\substack{\mu\in{\mathcal L}^*\setminus\{0\}\\ T_eW=\ker\mu}}H_\mu\]
and denote the (Hilbert) sum over all these subtori by $\bigoplus_W$. We obtain the decomposition
\[H=H_0\oplus \bigoplus_{\mu\in{\mathcal L}^*\setminus\{0\}}H_\mu=H_0\oplus\bigoplus_W E_W.\]
Moreover, set 
\[H_W:=H_0\oplus E_W =\bigoplus_{\substack{\mu\in{\mathcal L}^*\\ T_eW\subset \ker\mu}}H_\mu\]
and note that $H_W$ consists precisely of the $W$-invariant functions in $H$.

Now use the analogous notation $H^\prime_\mu, E^\prime_W, H^\prime_W$ for the corresponding subspaces of $H^\prime$. Fix a subtorus 
$W$ of $T$ of codimension 1 and let $F_W\co \O\to \O^\prime$ be the corresponding diffeomorphism from the assumption. We will show 
that $F_W^*\co H_W^\prime\to H_W$ is a Hilbert space isometry preserving the $L^2$-norm. It preserves the $L^2$-norm since 
$F_W^*\dvol_{g^\prime}=\dvol_g$. Now let $\psi\in C^\infty(\O^\prime)$ be invariant under $W$. Since the map 
$(\widehat{\O}/W,g^W)\to(\widehat{\O}^\prime/W,{g^\prime}^W)$ induced by $F_W$ is an isometry and the quotient maps are Riemannian 
submersions, we have $\|\grad(\psi\circ F_W)_{F_W^{-1}(y)}\|_g=\|\grad\psi_y\|_{g^\prime}$ for all $y\in\widehat{\O}$. Since 
$\widehat{\O}$ is dense in $\O$ and $\widehat{\O}^\prime$ is dense in $\O^\prime$, this implies that $F_W^*\co H_W^\prime\to H_W$ is 
a Hilbert space isometry with respect to the $H^1$-product. Hence so is its restriction ${F_W^*}_{|E_W^\prime}\co  E_W^\prime\to 
E_W$.

Considering all subtori $W\subset T$ of codimension 1 and choosing an arbitrary $F_W^*\co H_0^\prime\to H_0$, we obtain an 
$L^2$-norm-preserving isometry $H^\prime\to H$. Isospectrality of $(\O,g)$ and $(\O^\prime,g^\prime)$ finally follows from Theorem 
\ref{theorem:variation}.
\end{proof}

We will need the following definitions and results, which generalize \cite[1.5]{MR1895349} to our orbifold setting.

\begin{notations}
  \label{notations:O-T}
  We now fix a torus $T$ and use the notation ${\mathfrak t}=T_eT$, ${\mathcal L}:=\ker(\exp\co {\mathfrak t}\to T)$ as above. 
  Moreover, fix a compact connected Riemannian orbifold $(\O,g_0)$ and a smooth effective action of $T$ on $(\O,g_0)$ by isometries 
  and set $\widehat{\O}:=\O^\reg_T$. If $Z\in{\mathfrak t}$, write $Z^{\widehat{~}}:=\widehat{Z}:={Z^*_\reg}_{|\widehat{\O}}$ for 
  the fundamental vector field on $\widehat{\O}$ induced by $Z$.
\begin{enumerate}
 \item \label{enumremark:admissible} A ${\mathfrak t}$-valued $1$-form on $\O$ will be called \emph{admissible} if it is $T$-horizontal (i.e., it vanishes on all $X^*$, $X\in\mathfrak{t}$) and $T$-invariant.
 \item \label{enumremark:glambda} For an admissible $1$-form $\lambda$ on the orbifold $\O$ denote by $g_\lambda$ the Riemannian metric given in each chart $(U,\widetilde{U}/\Gamma,\pi)$ on $\O$ by
\[{g_\lambda}_\pi(X,Y)={g_0}_\pi(X+(\lambda_\pi(X))_\pi^*,Y+(\lambda_\pi(Y))_\pi^*)\]
for vector fields $X,Y\in{\mathcal V}(\widetilde{U})$. It is not hard to verify that this indeed defines a Riemannian orbifold metric.

Note that if $\Phi_{\lambda,\pi}$ denotes the $C^\infty(\widetilde{U})$-isomorphism
\[{\mathcal V}(\widetilde{U})\ni X\mapsto X-(\lambda_\pi(X))_\pi^*\in \mathcal{V}(\widetilde{U}),\] then 
${g_\lambda}_\pi=(\Phi_{\lambda,\pi}^{-1})^*{g_0}_\pi$. Since $\lambda$ is horizontal, $\Phi_{\lambda,\pi}$ is unipotent and this 
implies $\dvol_{g_{\lambda, \pi}}=\bigl|\det\Phi_{\lambda,\pi}^{-1}\bigr|\dvol_{g_{0,\pi}}=\dvol_{g_{0,\pi}}$. Since this holds for every 
chart $\pi$, we have $\dvol_{g_\lambda}=\dvol_{g_0}$.

\item $g_\lambda$ is $T$-invariant: let $z\in T$, $x\in\O$. There are charts $(U_i,\widetilde{U}_i/\Gamma_i,\pi_i)$, $i=1,2$, on $\O$ 
and a diffeomorphism $\widetilde{z}\in C^\infty(\widetilde{U}_1,\widetilde{U}_2)$ such that $x\in U_1$, $zx\in U_2$ and 
$\pi_2\circ\widetilde{z}=z\circ\pi_1$. Since $T$ is abelian, fundamental vector fields on $\O$ are $T$-invariant; in particular, 
$Z_{\pi_1}^*\in{\mathcal V}(\widetilde{U}_1)$ is $\widetilde{z}$-related to $Z_{\pi_2}^*\in{\mathcal V}(\widetilde{U}_2)$ for every 
$Z\in\mathfrak{t}$. Using that $\lambda$ and $g_0$ are also $T$-invariant, a straightforward calculation 
shows 
$\widetilde{z}^*g_{\lambda,\pi_2}=g_{\lambda,\pi_1}$.

\item Moreover, note that for every $x\in \widehat{\O}$ the metric $g_\lambda$ on $T_x\widehat{\O}$ restricts to the same metric as $g_0$ on the vertical subspace ${\mathfrak t}_x=\{\widehat{Z}_x;Z\in{\mathfrak t}\}\subset T_x\widehat{\O}$, because $\lambda$ is $T$-horizontal. 
Moreover, note that the metrics $g_0^T$ and $g_\lambda^T$ on $\widehat{\O}/T$ coincide. 
\end{enumerate}
\end{notations}

The proof of the next theorem is now just an imitation of the proof of \cite[Theorem 1.6]{MR1895349}.

\begin{theorem}
\label{theorem:isospect}
Let $\lambda$, $\lambda^\prime$ be two admissible $1$-forms on $\O$ satisfying: 

For every $\mu\in{\mathcal L}^*$ there is a $T$-equivariant isometry $F_\mu$ on $(\O,g_0)$ such that 
\begin{equation}
\label{equation:Fmu}
\mu\circ\lambda=F_\mu^*(\mu\circ\lambda^\prime).
\end{equation}

Then $(\O,g_\lambda)$ and $(\O,g_{\lambda^\prime})$ are isospectral.
\end{theorem}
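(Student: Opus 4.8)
The plan is to deduce the statement from Theorem \ref{theorem:torusaction}, applied with $(\O,g)=(\O,g_\lambda)$ and $(\O',g')=(\O,g_{\lambda'})$. Thus for every subtorus $W\subset T$ of codimension $1$ I must produce a $T$-equivariant diffeomorphism $F_W\co\O\to\O$ with $F_W^*\dvol_{g_{\lambda'}}=\dvol_{g_\lambda}$ that descends to an isometry $(\widehat\O/W,g_\lambda^W)\to(\widehat\O/W,g_{\lambda'}^W)$. Given such a $W$, I would choose the primitive $\mu\in\mathcal L^*$ with $\ker\mu=T_eW$ and set $F_W:=F_\mu$, the isometry furnished by hypothesis \eqref{equation:Fmu}. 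Two of the three requirements are then immediate: $F_\mu$ is $T$-equivariant by assumption, and since $\dvol_{g_\lambda}=\dvol_{g_0}=\dvol_{g_{\lambda'}}$ by Notations \ref{notations:O-T} while the $g_0$-isometry $F_\mu$ preserves $\dvol_{g_0}$, the volume condition $F_\mu^*\dvol_{g_{\lambda'}}=\dvol_{g_\lambda}$ holds.

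The real content is the isometry of the $W$-quotients, and here I would exploit the tower $\widehat\O\to\widehat\O/W\to\widehat\O/T$, whose second map is (on the free locus) a principal $T/W\cong S^1$-bundle. An $S^1$-invariant metric on such a bundle is determined by three pieces of data: the base metric, the fiberwise length function, and the principal connection. By Notations \ref{notations:O-T} the base metrics agree, $g_\lambda^T=g_0^T=g_{\lambda'}^T$, and since $\lambda,\lambda'$ are horizontal the metrics $g_\lambda,g_{\lambda'},g_0$ all agree on the vertical space $\mathfrak t_x$, so the fiber-length functions coincide as well. Hence the only $\lambda$-dependence sits in the connection. A pointwise computation with the shear $\Phi_\lambda^{-1}\co X\mapsto X+(\lambda X)^*$, which fixes vertical vectors and is a $g_\lambda$-to-$g_0$ isometry, identifies the $g_\lambda$-orthogonal complement of $\mathfrak t_x$ with $\{w-(\lambda w)^*\co w\in\mathfrak t_x^{\perp_{g_0}}\}$; reading off the vertical part shows that the $\mathfrak t$-valued principal connection of the $T$-action with respect to $g_\lambda$ is $\omega_\lambda=\omega_0+\lambda$. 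Composing with $\mu$, the $S^1$-connection of $(\widehat\O/W,g_\lambda^W)$ is $\theta_\lambda=\mu\circ\omega_0+\mu\circ\lambda$, and likewise $\theta_{\lambda'}=\mu\circ\omega_0+\mu\circ\lambda'$.

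It remains to check that the induced map $\bar F_\mu$ matches all three data. Being a $T$-equivariant $g_0$-isometry, $F_\mu$ preserves $\omega_0$ and the fiber lengths and descends to a $g_0^T$-isometry of the base; in particular $\bar F_\mu^*(\mu\circ\omega_0)=\mu\circ\omega_0$. Hypothesis \eqref{equation:Fmu}, which reads $\mu\circ\lambda=F_\mu^*(\mu\circ\lambda')$, descends to $\bar F_\mu^*(\mu\circ\lambda')=\mu\circ\lambda$ on $\widehat\O/W$, so $\bar F_\mu^*\theta_{\lambda'}=\theta_\lambda$. Matching base metric, fiber length and connection then forces $\bar F_\mu^*g_{\lambda'}^W=g_\lambda^W$, i.e.\ $F_\mu$ induces the required isometry, and Theorem \ref{theorem:torusaction} yields isospectrality of $(\O,g_\lambda)$ and $(\O,g_{\lambda'})$.

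I expect the main obstacle to be the rigorous handling of the bundle picture on the regular free locus $\widehat\O$, which need not be connected and on which the residual $T/W$-action need not be free: the clean ``base metric $+$ fiber length $+$ connection'' description of $S^1$-bundle metrics, the descent of $\omega_0$ and of $\mu\circ\lambda$ to $\widehat\O/W$, and the identification of the submersion metric via the shear $\Phi_\lambda^{-1}$ all have to be justified at the level of the orbifold submersions of Section \ref{section:preliminaries} rather than quoted from the manifold case. The pointwise computation giving $\omega_\lambda=\omega_0+\lambda$ is elementary linear algebra, but keeping track of $g_0$- versus $g_\lambda$-orthogonality while passing to the quotient is where care is needed.
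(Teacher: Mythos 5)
Your proposal is correct, and its skeleton coincides with the paper's: both reduce to Theorem \ref{theorem:torusaction} by choosing, for each codimension-one subtorus $W$, an element $\mu\in{\mathcal L}^*$ with $\ker\mu=T_eW$ and taking $F_W:=F_\mu$, and the volume-form verification is word-for-word the same. Where you genuinely diverge is in the one substantive step, namely that $F_\mu$ induces an isometry $(\widehat\O/W,g_\lambda^W)\to(\widehat\O/W,g_{\lambda'}^W)$. The paper argues pointwise with the shear maps: for $V\in T_x\widehat\O$ which is $W$-horizontal with respect to $g_\lambda$ it sets $X:=\widehat\Phi_\lambda^{-1}(V)$, uses \eqref{equation:Fmu} to show that ${F_\mu}_*V-\widehat\Phi_{\lambda'}({F_\mu}_*X)$ is $W$-vertical while $Y:=\widehat\Phi_{\lambda'}({F_\mu}_*X)$ is $W$-horizontal with respect to $g_{\lambda'}$, concludes that $Y$ is the $g_{\lambda'}$-horizontal part of ${F_\mu}_*V$, and finishes with $\|Y\|_{g_{\lambda'}}=\|{F_\mu}_*X\|_{g_0}=\|X\|_{g_0}=\|V\|_{g_\lambda}$. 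You instead match the three structural invariants (base metric, fibre length, connection form) of $T/W$-invariant metrics on the principal $T/W$-bundle $\widehat\O/W\to\widehat\O/T$, resting on the formula $\omega_\lambda=\omega_0+\widehat\lambda$ --- which the paper records only later, in \ref{proposition:nonisolist}(\ref{propositon:nonisolist-iv}), for the non-isometry argument. Your route buys conceptual clarity: it isolates exactly where $\lambda$ enters the metric (only through the connection), and makes hypothesis \eqref{equation:Fmu} visibly the statement that the $\mu$-components of the two connections correspond under $F_\mu$. The cost is the auxiliary machinery you acknowledge: the decomposition $g=\pi^*\check g+f^2\,\theta\otimes\theta$, the identification of the metric connection of $g_\lambda^W$ with the descent of $\mu\circ\omega_\lambda$, and the coincidence of the fibre-length functions (which needs the small remark that the relevant orthogonal projections take place inside ${\mathfrak t}_x$, where $g_\lambda$, $g_{\lambda'}$ and $g_0$ agree) all require short verifications that the paper's direct computation avoids. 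One worry you raise is unnecessary: since $T$ acts freely on $\widehat\O=\O^\reg_T$ by definition, $T/W$ acts freely on $\widehat\O/W$, so all quotients in your tower are honest manifolds and principal bundles --- exactly the setting the paper itself sets up at the start of Section \ref{subsection:general-nonisometry} --- and the possible disconnectedness of $\widehat\O$ is harmless because every step of the argument is local.
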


\begin{proof}
We shall use Theorem \ref{theorem:torusaction}. So let $W$ be a subtorus of $T$ of codimension 1 and choose $\mu\in{\mathcal L}^*$ such that $\ker\mu=T_eW$. Let $F_\mu\in\Isom(\O,g_0)$ be a corresponding $T$-invariant isometry satisfying \eqref{equation:Fmu}. We will show that $F_W:=F_\mu$ satisfies the conditions of Theorem \ref{theorem:torusaction}: Since $F_\mu$ is an isometry, we have by the remarks above that $F_\mu^*\dvol_{g_{\lambda^\prime}}=F_\mu^*\dvol_{g_0}=\dvol_{g_0}=\dvol_{g_\lambda}$. To see that $F_\mu$ induces an isometry between the manifolds $(\widehat{\O}/W,g^W_\lambda)$ and $(\widehat{\O}/W,{g}^W_{\lambda^\prime})$, let $x\in\widehat{\O}$, let $V\in T_x\widehat{\O}$ be $W$-horizontal with respect to $g_\lambda$ and set $X:=\widehat{\Phi}_\lambda^{-1}(V)\in T_x\widehat{\O}$, $Y:=\widehat{\Phi}_{\lambda^\prime}({F_\mu}_*X)\in T_{F_\mu(x)}\widehat{\O}$.

First, note that ${F_\mu}_*V-Y$ is $W$-vertical: Condition \eqref{equation:Fmu} implies that $Z:=\widehat{\lambda^\prime}({F_\mu}_*X)-\widehat{\lambda}(X)\in \ker\mu$. Using that $F_\mu$ is $T$-equivariant, it is straightforward to show that for $Y:=\widehat{\Phi}_{\lambda^\prime}({F_\mu}_*X)$ we obtain ${F_\mu}_*V-Y=\widehat{Z}_{F_\mu(x)}$, which is $W$-vertical by our choice of $\mu$.

Second, $Y$ is $W$-horizontal with respect to $g_{\lambda^\prime}$: Since $\lambda$ is $T$-horizontal and $V$ is $W$-horizontal with respect to $g_\lambda$, the vector $X=\widehat{\Phi}_\lambda^{-1}(V)\in T_x\widehat{\O}$ is $W$-horizontal with respect to $g_0$. Hence so is ${F_\mu}_*X$, and $Y=\widehat{\Phi}_{\lambda^\prime}({F_\mu}_*X)$ is $W$-horizontal with respect to $g_{\lambda^\prime}$.

The two observations above imply that $Y$ is the $W$-horizontal part of ${F_\mu}_*V$ with respect to $g_{\lambda^\prime}$. Since
$\|Y\|_{g_{\lambda^\prime}}=\|{F_\mu}_*X\|_{g_0}=\|X\|_{g_0}=\|V\|_{g_\lambda}$, we conclude that $F_\mu$ indeed induces an isometry between $(\widehat{\O}/W,g^W_\lambda)$ and $(\widehat{\O}/W,{g}^W_{\lambda^\prime})$.
\end{proof}

\subsection{Nonisometry}
\label{subsection:general-nonisometry}
In this section we give a sufficient criterion that implies two orbifolds $(\O,g_\lambda)$ and $(\O,g_{\lambda^\prime})$ as in Theorem \ref{theorem:isospect} are not isometric. Let $(\O,g_0)$, $T$, ${\mathfrak t}$, ${\mathcal L}$, $\widehat{\O}$ be as in the preceding section. Note that the action of $T$ on $\widehat{\O}$ gives $\widehat{\O}$ the structure of a principal $T$-bundle $\pi\co \widehat{\O}\to\widehat{\O}/T$. By $\lambda$ we denote an admissible ${\mathfrak t}$-valued $1$-form on $\O$. We now recall the notations and remarks from \cite[2.1]{MR1895349}, applied to our special case of the connected $T$-invariant manifold $\O^\reg$.

\begin{notations}
\label{proposition:nonisolist}\leavevmode
\begin{enumerate}
\item \label{proposition:nonisolisti} A diffeomorphism $F\co \O^\reg\to\O^\reg$ is called \emph{$T$-preserving} if conjugation by $F$ preserves $T\subset\Diffeo(\O^\reg)$, i.e., \[c^F(z):=F\circ z\circ F^{-1}\in T~\forall z\in T.\] In this case we denote by $\Psi_F:=c^F_*$ the automorphism of ${\mathfrak t}=T_eT$ induced by the isomorphism $c^F$ on $T$. Obviously, each $T$-preserving diffeomorphism $F$ of $\O^\reg$ maps $T$-orbits to $T$-orbits. In particular, $F$ preserves $\widehat{\O}$. Moreover, it is straightforward to show $F_*\widehat{Z}=\widehat{\Psi_F(Z)}$ for all $Z\in\mathfrak{t}$.

\item \label{proposition:nonisolist-Aut} We denote by $\Aut^T_{g_0}(\O^\reg)$ the group of all $T$-preserving diffeomorphisms of $\O^\reg$ which, in addition, preserve the $g_0$-norm of vectors tangent to the $T$-orbits in $\widehat{\O}$ and induce an isometry of the Riemannian manifold $(\widehat{\O}/T,g_0^T)$. We denote the corresponding group of induced isometries by $\ol{\Aut}^T_{g_0}(\O^\reg)\subset \Isom(\widehat{\O}/T,g^T_0)$.

\item \label{proposition:nonisolist-D} We define 
\[{\mathcal D} := \{\Psi_F;~F\in\Aut^T_{g_0}(\O^\reg)\}\subset\Aut({\mathfrak t}).\] 
Note that ${\mathcal D}$ is discrete because it is a subgroup of the discrete group 
$\{\Psi\in \Aut({\mathfrak t});~\Psi({\mathcal L})={\mathcal L}\}$.

\item \label{propositon:nonisolist-iv} Let $\omega_0\co T\widehat{\O}\to {\mathfrak t}$ denote the connection form on the principal $T$-bundle $\widehat{\O}$ associated with $g_0$; i.e., $\omega_0(\widehat{Z})=Z~\forall Z\in{\mathfrak t}$ and for each $x\in\widehat{\O}$ the kernel $\ker({\omega_0}_{|T_x\widehat{\O}})$ is the $g_0$-orthogonal complement of the vertical space ${\mathfrak t}_x=\{\widehat{Z}_x;~Z\in{\mathfrak t}\}$ in $T_x\widehat{\O}$. The connection form on $\widehat{\O}$ associated with $g_\lambda$ is easily seen to be given by $\omega_\lambda:=\omega_0+\hat{\lambda}$.

\item Let $\Omega_\lambda$ denote the curvature form on the manifold $\widehat{\O}/T$ associated with the connection form $\omega_\lambda$ on $\widehat{\O}$. We have $\pi^*\Omega_\lambda=d\omega_\lambda$, because $T$ is abelian.

\item \label{proposition:nonisolist-vi} Since $\widehat{\lambda}$ is $T$-invariant and $T$-horizontal, it induces a ${\mathfrak t}$-valued $1$-form $\ol{\lambda}$ on $\widehat{\O}/T$. Then $\pi^*\Omega_\lambda=d\omega_\lambda=d\omega_0+d\widehat{\lambda}$ implies $\Omega_\lambda=\Omega_0+d\ol{\lambda}$.
\end{enumerate}
\end{notations}

\begin{lemma}
\label{lemma:fomega}
Let $F\co (\O^\reg,g_\lambda)\to (\O^\reg,g_{\lambda^\prime})$ be a $T$-preserving isometry. Then:
\begin{enumerate}
\item\label{lemma:fomega.i} 
$F$ preserves the $g_0$-norm of vectors tangent to the $T$-orbits in $\widehat{\O}$, and it induces an isometry $\bar{F}$ of 
$(\widehat{\O}/T,g_0^T)$. In particular, $F\in\Aut_{g_0}^T(\O)$ and $\Psi_F\in{\mathcal D}$.
\item\label{lemma:fomega.ii}
 $F^*\omega_{\lambda^\prime}=\Psi_F\circ \omega_\lambda\in\Omega^1(\widehat{\O},{\mathfrak t})$, in particular $F^* 
 d\omega_{\lambda^\prime}=\Psi_F\circ d\omega_\lambda$.
\item\label{lemma:fomega.iii} 
The isometry $\bar{F}$ of $(\widehat{\O}/T,g_0^T)$ satisfies $\bar{F}^*\Omega_{\lambda^\prime}=\Psi_F\circ \Omega_\lambda$.
\end{enumerate}
\end{lemma}
\begin{proof}
  Apply \cite[Lemma 2.2]{MR1895349} to the manifold $M:=\O^\reg$.
\end{proof}

Before coming to the following propositions, note that the isometry group $\Isom(\O,g)$ of a Riemannian orbifold $(\O,g)$ endowed with the compact-open topology admits a unique smooth structure that turns it into a Lie group (\cite{MR2355369}). The proof of the following proposition is analogous to the proof of \cite[Proposition 2.3]{MR1895349}.

\begin{proposition}
\label{proposition:G-maxtorus}
Let $\lambda$ be an admissible ${\mathfrak t}$-valued $1$-form on $\O$ such that the associated curvature form $\Omega_\lambda$ on $\widehat{\O}/T$ satisfies the following genericity condition:
\begin{equation}\label{eqG}
\text{No nontrivial $1$-parameter group in $\ol{Aut}_{g_0}^T(\O^\reg)$ preserves $\Omega_{\lambda}$.}\tag{G}
\end{equation}
Then $T$ is a maximal torus in $\Isom(\O,g_\lambda)$
\end{proposition}

\addtocounter{equation}{1}

\begin{proof}
Assume that $F_t\in\Isom(\O,g_\lambda)$ is a 1-parameter family of isometries commuting with $T$. If we can show that $F_t\in 
T~\forall t$, we know that $T$ is maximal. Since the $F_t$ commute with $T$, they are $T$-preserving. By 
Lemma \ref{lemma:fomega}\eqref{lemma:fomega.i} 
the restrictions ${F_t}_{|\O^\reg}$ induce a $1$-parameter family $\bar{F}_t\in\Isom(\widehat{\O}/T,g_0^T)$, hence $F_t\in 
\Aut^T_{g_0}(\O^\reg)$ and $\Psi_{F_t}\in{\mathcal D}~\forall t$. Since $\Psi_{F_0}=\Psi_{\Id}=\Id$ and ${\mathcal D}$ is discrete, 
we have $\Psi_{F_t}=\Id$ for all $t$ and hence by 
Lemma \ref{lemma:fomega}\eqref{lemma:fomega.iii} each $\bar{F}_t$ preserves $\Omega_\lambda$. By \eqref{eqG} 
this implies $\bar{F}_t=\id$ for all $t$. Hence each ${F_t}_{|\widehat{\O}}$ is a gauge transformation of the principal bundle 
$\widehat{\O}\to \widehat{\O}/T$ which, by Lemma \ref{lemma:fomega}\eqref{lemma:fomega.ii}, preserves the connection form $\omega_\lambda$. Therefore 
${F_t}_{|\widehat{\O}}$ acts as an element of $T$ on every connected component of $\widehat{\O}$ (\cite[Lemma 4.2.3]{weilandt10}). 
Since the isometry ${F_t}_{|\O^\reg}$ is determined uniquely by its values on an open set in the connected manifold $\O^\reg$ and 
$\O^\reg$ is dense in $\O$, we conclude that $F_t\in T$.
\end{proof}

Lemma \ref{lemma:fomega} and the proposition above now imply the following proposition. Its proof is almost literally the same as that of \cite[Proposition 2.4]{MR1895349}  but we include it for completeness. Note that we use the fact that the isometry group of a compact orbifold is compact (\cite{MR0355886}).

\begin{proposition}
\label{proposition:nonisometry}
Let $\lambda,\lambda^\prime$ be admissible $1$-forms on $\O$ such that $\Omega_{\lambda^\prime}$ has property \eqref{eqG}. Furthermore, assume that
\begin{equation}\label{eq:N}
\Omega_\lambda\notin {\mathcal D}\circ {\ol{\Aut}_{g_0}^T}(\O^\reg)^*\Omega_{\lambda^\prime}.\tag{N}
\end{equation}
Then $(\O,g_\lambda)$ and $(\O,g_{\lambda^\prime})$ are not isometric.
\end{proposition}

\begin{proof}
Suppose that there were an isometry $F\co (\O,g_\lambda)\to(\O,g_{\lambda^\prime})$. By Proposition \ref{proposition:G-maxtorus}, $T$ is a maximal torus in $\Isom(\O,g_{\lambda^\prime})$. Since $\{F\circ z\circ F^{-1};~z\in T\}$ also is a torus in $\Isom(\O,g_{\lambda^\prime})$ and all maximal tori are conjugate, we can assume $F$ --- after possibly combining it with an isometry of $(\O,g_{\lambda^\prime})$ --- to be $T$-preserving. But then Lemma \ref{lemma:fomega} implies $\bar{F}^*\Omega_{\lambda^\prime}=\Psi_F\circ\Omega_\lambda$ with $\bar{F}\in{\ol{\Aut}_{g_0}^T}(\O^\reg)$ and $\Psi_F\in{\mathcal D}$, which contradicts our assumption.
\end{proof}

\section{Examples of isospectral bad orbifolds}
\label{section:examples}
As mentioned in Section \ref{subsection:orbispec}, one can easily obtain examples of isospectral bad orbifolds of the form $\O\times\O_1,\O\times\O_2$ from isospectral good orbifolds $\O_1,\O_2$ and a bad orbifold $\O$. However, in this section we will use the constructions from the preceding section to give genuinely new examples of isospectral bad orbifolds. More precisely, for every fixed $n\ge 4$ and coprime positive integers $p,q$ we will give isospectral pairs and even families of metrics on certain $2n$-dimensional weighted projective spaces (depending on $p,q$). The latter turn out to be bad orbifolds for $(p,q)\ne (1,1)$.

\subsection{Our weighted projective spaces}
\label{subsection:wps}
Consider the following orbifold which is a special weighted projective space: for $n\ge 4$, let $S^{2n+1}\subset \C^{n+1}$ denote the standard sphere and let $p,q$ be coprime positive integers. Let $S^1\subset \C$ act smoothly on $S^{2n+1}$ by
\begin{equation}
\label{equation:defwp}
\sigma(u,v)=(\sigma^pu,\sigma^qv),
\end{equation}
where $\sigma\in S^1\subset\C$, $u\in \C^{n-1}$, $v\in\C^2$. The quotient $\O:=\O(p,q):=S^{2n+1}/S^1$ under this action becomes an orbifold and \[\O^\reg=\{[(u,v)]\in \O;~u\ne 0\wedge v\ne 0\}\rm{:}\] the points of the form $(u,0)$ are fixed precisely by the $p$-th roots of unity, the points of the form $(0,v)\in S^{2n+1}$ are fixed precisely by the $q$-th roots of unity, and the action is free in all other points. 

For every pair $(p,q)$ we will construct isospectral metrics on the orbifold $\O=\O(p,q)$. Note that for $p=q=1$ we have $\O=\CP^n$. All other orbifolds in this family are singular.

Since $S^{2n+1}$ is simply connected and $S^1$ is connected, the orbifold fundamental group $\pi_1^\text{Orb}(\O(p,q))$ is trivial for all $p,q$ (\cite[Proposition 1.54]{MR2359514}) and hence the orbifolds $\O(p,q)$ for $(p,q)\ne (1,1)$ are ``bad'', i.e., they cannot be written as a quotient of a manifold by a properly discontinuous group action.

Throughout this section, $\langle,\rangle$ will always denote the canonical metric on $S^{2n+1}\subset \C^{n+1}$ given by the restriction of the inner product
\[\langle X,Y\rangle =\re \left(\sum_{i=1}^{n+1} X_i\bar{Y}_i\right)\text{ for }X,Y\in\C^{n+1}.\]
Besides, $\langle,\rangle$ will also denote the unique metric on $\O=S^{2n+1}/S^1$ with respect to which the quotient map $P\co S^{2n+1}\to S^{2n+1}/S^1$ becomes a Riemannian orbifold submersion. In cases where the metric is not specified, we will always assume that $\langle,\rangle$ is used. The metric $\langle,\rangle$ on $\O$ will also be denoted by $g_0$. 

Note that isospectral families of metrics on $\O(1,1)=\CP^n$ have already been given in \cite{rueckriemen06} using the manifold version of the construction in the following section. Similar methods have also led to examples of isospectral families of good orbifolds (\cite{sutton10}). For results on the spectral geometry of weighted projective spaces with their standard metric see \cite{MR2395193} and \cite{MR2529473}.

\begin{remark}
  The results from Section \ref{subsection:isometrics} easily generalize to the case that $p,q_1,q_2$ are natural numbers with greatest common divisor one and $S^1$ acts on $S^{2n+1}$ via $\sigma(u,v_1,v_2):=(\sigma^pu,\sigma^{q_1}v_1,\sigma^{q_2}v_2)$. However, the nonisometry proof (in particular, the statements from Lemma \ref{lemma:Oab} onwards) would become considerably more complicated, and so we restricted our attention to the special case $q_1=q_2$ given in \eqref{equation:defwp}.
\end{remark}

\subsection{Isospectral metrics}
\label{subsection:isometrics}
In this section we will give isospectral metrics on the orbifold $\O=\O(p,q)$. To this end we will apply the torus method from Section \ref{section:torus-method} to a certain action of some quotient of $S^1\times S^1\subset \C^2$ on $\O$. We identify $\R^2$ with ${\mathfrak t}=T_{(1,1)}(S^1\times S^1)$ via $\R^2\ni(t_1,t_2)\mapsto (it_1,it_2)\in {\mathfrak t}\subset\C^2$ and set $Z_1=(i,0),Z_2=(0,i)\in {\mathfrak t}$.

In order to introduce appropriate admissible $1$-forms $\lambda,\lambda^\prime$ on $\O$ we will need the following variation of \cite[Definition 3.2.4]{MR1895349}. (The only difference is a broader definition of equivalence in (\ref{definition:equivalent}).) 

\begin{definition}
\label{definition:isomaps}
Let $j,j^\prime\co  {\mathfrak t}\simeq\R^2\to \mathfrak{su}(m)$ be two linear maps.
\begin{enumerate}
\item
  \label{definition:isospectral} We call $j$ and $j^\prime$ \emph{isospectral} if for each $Z\in {\mathfrak t}$ there is $A_Z\in SU(m)$ such that $j_Z^\prime=A_Zj_ZA_Z^{-1}$.
 \item \label{definition:equivalent} Let $Q\co \C^m\to\C^m$ denote complex conjugation and set
\[{\mathcal E}:=\{\phi\in\Aut({\mathfrak t});~\phi(Z_k)\in\{\pm Z_1,\pm Z_2\}\text{ for }k=1,2\}.\]
We call $j$ and $j^\prime$ \emph{equivalent} if there is $A\in SU(m)\cup SU(m)\circ Q$ and $\Psi\in{\mathcal E}$ such that $j_Z^\prime=Aj_{\Psi(Z)}A^{-1}$ for all $Z\in {\mathfrak t}$.
\item
  \label{definition:generic}We say that $j$ is \emph{generic} if no nonzero element of $\mathfrak{su}(m)$ commutes with both $j_{Z_1}$ and $j_{Z_2}$.
\end{enumerate}
\end{definition}

Note that all properties above are stable under multiplication of both $j$ and $j^\prime$ with a fixed nonzero real number. We will need the following proposition which is just a simplified form of \cite[Proposition 3.2.6(i)]{MR1895349}.
\begin{proposition}
\label{proposition:isomaps}
For every $m\ge 3$ there is an open interval $I\subset \R$ and a continuous family $j(t)$, $t\in I$, of linear maps $\R^2\to \mathfrak{su}(m)$ such that:
\begin{enumerate}
\item The maps $j(t)$ are pairwise isospectral.
\item \label{enumitem:nonequiv} For $t_1,t_2\in I$ with $t_1\ne t_2$ the maps $j(t_1)$ and $j(t_2)$ are not equivalent.
\item All maps $j(t)$ are generic.
\end{enumerate}
\end{proposition}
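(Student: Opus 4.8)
The plan is to reduce this to an explicit construction of linear maps $j(t)\colon \R^2 \to \mathfrak{su}(m)$ whose two generators have prescribed eigenvalues (guaranteeing isospectrality) while depending on a real parameter in a way that breaks equivalence. First I would exploit the observation noted just before the proposition: all three properties are invariant under scaling, so it suffices to build a family with the right qualitative features. The natural approach is to let $j(t)$ be determined by a pair of skew-Hermitian matrices $j(t)_{Z_1}=:A(t)$ and $j(t)_{Z_2}=:B(t)$, extended linearly, and arrange $A(t)$ to be a fixed diagonal matrix $\mathrm{diag}(ia_1,\dots,ia_m)$ with distinct purely imaginary entries summing to zero, and $B(t)$ to be a conjugate $C(t)\,B_0\,C(t)^{-1}$ of a fixed diagonal matrix $B_0$ by a one-parameter family $C(t)\in SU(m)$. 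Then for each fixed $Z=sZ_1+tZ_2$, the matrix $j(t)_Z = sA(t)+tB(t)$ varies, so isospectrality across different parameter values is the genuine content and must be engineered, not assumed.

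The cleaner route, which I believe is what Proposition~3.2.6 of \cite{MR1895349} actually supplies, is to instead fix the spectra of both generators and of all their linear combinations simultaneously. Concretely I would seek $A$ fixed diagonal and a family $B(t)$ such that for every $(s,t')\in\R^2$ the eigenvalues of $sA+t'B(t)$ are independent of the parameter $t$; this is exactly the condition that the maps $j(t)$ be pairwise isospectral in the sense of Definition~\ref{definition:isomaps}(\ref{definition:isospectral}). Such families exist for $m\ge 3$ because one has enough free parameters in $\mathfrak{su}(m)$: the isospectrality constraints fix finitely many symmetric-function data (traces of powers of $sA+t'B$), and for $m\ge 3$ the dimension of $\mathfrak{su}(m)$ exceeds the number of constraints, leaving a positive-dimensional isospectral deformation space through a generic basepoint. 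I would produce an explicit one-parameter curve $t\mapsto j(t)$ inside this isospectral set, passing through a generic point, and take $I$ to be a small open interval on which the curve stays inside the (open) generic locus.

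For the three clauses individually: clause~(i) holds by construction of the isospectral family. Clause~(iii), genericity, is an open condition on $(j_{Z_1},j_{Z_2})$ — the centralizer of the pair being trivial fails only on a proper subvariety — so I would verify it at the basepoint and shrink $I$ so that it persists. The main obstacle, and the real content, is clause~(\ref{enumitem:nonequiv}): showing that distinct parameter values give \emph{inequivalent} maps, where equivalence allows conjugation by $SU(m)\cup SU(m)\circ Q$ together with the finite group $\mathcal E$ of sign-and-swap automorphisms of $\mathfrak t$. The standard device is to attach to each $j$ a conjugation-invariant, $\mathcal E$-sensitive numerical invariant — for instance the unordered collection of joint eigenvalue data of the commuting-or-noncommuting pair, or traces such as $\tr(j_{Z_1}^a j_{Z_2}^b j_{Z_1}^c\cdots)$ of mixed words that are unchanged by conjugation (and by $Q$, which sends each $j_Z$ to $\overline{j_Z}=-j_Z^{\mathsf T}$) but that detect the parameter $t$. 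I would choose the curve so that some such invariant is a nonconstant (ideally strictly monotonic) function of $t$ on $I$, so that it cannot be matched across the finitely many elements of $\mathcal E$ after shrinking $I$. This injectivity-of-invariants step is where the genuine work lies; once it is in place, distinct $t_1\ne t_2$ cannot be related by any admissible $(A,\Psi)$, giving non-equivalence and completing the proof.
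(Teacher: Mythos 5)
Your strategy (a dimension count for the existence of isospectral deformations, openness of genericity, and trace invariants for non-equivalence) is indeed the skeleton of the argument the paper relies on — note, though, that the paper does not prove this proposition at all: it imports it as a simplified form of \cite[Proposition 3.2.6(i)]{MR1895349}, and the only genuinely new mathematical content is the subsequent remark that the cited non-equivalence proof survives the broadened Definition \ref{definition:isomaps}(\ref{definition:equivalent}) because equivalence still forces $\tr((j_{Z_1}^2+j_{Z_2}^2)^2)=\tr(({j'_{Z_1}}^2+{j'_{Z_2}}^2)^2)$. Measured against that, your reconstruction has two genuine gaps, both in the part you yourself identify as the real content.

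First, the dimension count as you state it proves too little: a positive-dimensional isospectral deformation space through a generic basepoint is automatic and useless, because the $SU(m)$-conjugation orbit of $j$ is itself a positive-dimensional isospectral family consisting of pairwise \emph{equivalent} maps. What is needed is that the isospectral set has dimension strictly larger than the orbit (this is where $(m-1)(m-2)>0$, i.e.\ $m\ge 3$, actually enters), together with a reason why your chosen curve is transverse to the orbits — equivalently, why some equivalence-invariant function is non-constant along it. Second, your treatment of the finite group ${\mathcal E}$ fails: shrinking the interval does not exclude cross-parameter coincidences $j(t_1)\sim A\, j(t_2)\circ\Psi\, A^{-1}$. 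If for some $\Psi\in{\mathcal E}$ the two functions $t\mapsto I(j(t))$ and $t\mapsto I(j(t)\circ\Psi)$ agree at the basepoint but differ to first order, matches $I(j(t_1))=I(j(t_2)\circ\Psi)$ with $t_1\ne t_2$ occur at arbitrarily small scales, so no shrinking helps; moreover your candidate invariants, mixed-word traces $\tr(j_{Z_1}^a j_{Z_2}^b\cdots)$, are generally \emph{not} ${\mathcal E}$-invariant (a swap $Z_1\leftrightarrow Z_2$ or a sign change with odd exponents alters them). The device that closes both gaps at once — and is precisely the invariant the paper's remark singles out — is $I(j)=\tr((j_{Z_1}^2+j_{Z_2}^2)^2)$: the squares kill the signs, the symmetric sum kills the swap, Hermiticity of $j_{Z_k}^2$ makes $I$ real and $Q$-invariant, and conjugation-invariance is clear; one then constructs the curve $j(t)$ so that $t\mapsto I(j(t))$ is strictly monotone, which simultaneously certifies transversality to the equivalence orbits and pairwise non-equivalence. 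Without committing to such an ${\mathcal E}$-invariant quantity and proving its monotonicity, clause (\ref{enumitem:nonequiv}) remains unproved.
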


\begin{remark}
Note that the proof of \eqref{enumitem:nonequiv} in \cite{MR1895349} still holds for our slightly different definition of equivalence, since Definition \ref{definition:isomaps}\eqref{definition:equivalent} still implies that $\tr((j_{Z_1}^2+j_{Z_2}^2)^2)=\tr(({j^\prime_{Z_1}}^2+{j^\prime_{Z_2}}^2)^2)$.
\end{remark}

\subsubsection{Isospectral Pairs}
\label{subsubsection:isospectral-pairs}
In this section we will explain how two isospectral maps $j,j^\prime\co  \R^2\to \mathfrak{su}(n-1)$ (which do not necessarily have to lie in a continuous family) induce isospectral metrics on our orbifold $\O=\O(p,q)$ from Section \ref{subsection:wps}. More precisely, we will describe a construction process which associates metrics $g_{\lambda}, g_{\lambda^\prime}$ on $\O$ with $j,j^\prime$.

Consider the following action of the two-torus $\widetilde{T}:=S^1\times S^1\subset \C^2$ on $S^{2n+1}\subset \C^{n+1}$:
\begin{equation}
\label{equation:T-action}
(\sigma_1,\sigma_2)(u,v_1,v_2)=(u,\sigma_1v_1,\sigma_2v_2)
\end{equation}
for $\sigma_1,\sigma_2\in S^1\subset \C,u\in \C^{n-1}$ and $v_1,v_2\in\C$. This action is isometric and commutes with the $S^1$-action above and hence induces a smooth $\widetilde{T}$-action on $\O$. This action is not effective but induces an effective action of 
\[T:=(S^1\times S^1)/\{(\sigma,\sigma);\sigma\text{ $p$-th root of untity}\}.\]

Note that the exponential map $\mathfrak{t}\ni s_1Z_1+s_2Z_2\mapsto (e^{is_1},e^{is_2})\in \widetilde{T}$ induces an isomorphism $\mathfrak{t}/\widetilde{\mathcal L}\simeq\widetilde{T}$ with $\widetilde{\mathcal L}:=\spann_\Z\{2\pi Z_1,2\pi Z_2\}$ and an isomorphism $\mathfrak{t}/{\mathcal L}\simeq T$ with ${\mathcal L}:=\spann_\Z\{2\pi Z_1,\frac{2\pi}{p}(Z_1+Z_2)\}$.

Moreover, set
\[\widehat{S^{2n+1}}=\{(u,v)\in\C^{n-1}\times\C^2;~\|u\|^2+\|v\|^2=1, u\ne 0, v_j\ne 0~\forall j=1,2\}.\]
With $\widehat{\O}$ defined as in Notations and Remarks \ref{notations:O-T} (with respect to our effective $T$-action on $\O=\O(p,q)$) we then have $\widehat{\O}=P(\widehat{S^{2n+1}})$.

Given a linear map $j\co \R^2\to \mathfrak{su}(n-1)$, define an $\R^2$-valued $1$-form $\kappa=(\kappa^1,\kappa^2)$ on $S^{2n+1}\subset\C^{n+1}$ by
\begin{equation}
\kappa_{(u,v)}^k(U,V):=\|u\|^2\langle j_{Z_k}u,U\rangle -\langle U,iu\rangle \langle j_{Z_k}u,iu\rangle \label{equation:kappa}
\end{equation}
for $u\in\C^{n-1}$, $v\in\C^2$, $U\in\C^{n-1}$ and $V\in\C^2$ and restricting to $S^{2n+1}$. Since $\kappa$ is independent of $V$, it is $T$-horizontal; in particular, $\kappa_{(u,v)}(0,iv)=\kappa_{(u,v)}(Z_1^*+Z_2^*)=0$ for $(u,v)\in S^{2n+1}$. 
Moreover, 
\[\kappa^k_{(u,v)}(iu,0)=\|u\|^2\langle j_{Z_k}u,iu\rangle -\langle iu,iu\rangle\langle j_{Z_k}u,iu\rangle =0\text{ for }k=1,2\]
(as already noted in the proof of \cite[3.2.2]{MR1895349}). Hence $\kappa$ is also $S^1$-horizontal, since the vertical space in $(u,v)\in S^{2n+1}$ under the $S^1$-action is given by the real span of $(ipu,iqv)$. Moreover, $\kappa$ is $S^1$-invariant, since $S^1$ acts isometrically and each $j_{Z_k}\in \mathfrak{su}(n-1)$ commutes with scalars in $S^1\subset \C$.

Note that this implies that $\kappa$ induces a unique $\R^2$-valued $1$-form $\lambda$ on $\O$ satisfying
\begin{equation}
\label{equation:lambdakappa}
P^*\lambda=\kappa.
\end{equation}
Moreover, since $P^*$ commutes with $d$, we have
\[d\lambda(P_*(U_1,V_1),P_*(U_2,V_2))=d\kappa((U_1,V_1),(U_2,V_2)).\]

We will need the following basic observations.

\begin{proposition}
\label{proposition:kappalambda}\leavevmode
\begin{enumerate}
\item \label{proposition:kappalambda:1} $P_{|\widehat{S^{2n+1}}}\co \widehat{S^{2n+1}}\to\widehat{\O}$ is $\widetilde{T}=S^1\times 
S^1$-equivariant.
\item For every $Z\in\mathfrak{t}$ the differential $P_*$ maps the fundamental vector field $Z^*_{|\widehat{S^{2n+1}}}\in{\mathcal 
V}(\widehat{S^{2n+1}})$ to the fundamental vector field $\widehat{Z}$ on $\widehat{\O}$.
\item Let $j\co \mathfrak{t}\simeq \R^2\to\mathfrak{su}(n-1)$ be a linear map. Then for the $\mathfrak{t}$-valued $1$-forms 
$\kappa$ given in \eqref{equation:kappa} and $\lambda$ given in \eqref{equation:lambdakappa} we have:
\begin{enumerate}
\item $\kappa$ is $\widetilde{T}$-invariant and $\widetilde{T}$-horizontal.
\item $\lambda$ is admissible in the sense of\/ \rmref{notations:O-T}\eqref{enumremark:admissible} with respect to the effective 
$T$-action on $\O$ induced by \eqref{equation:T-action}.
\end{enumerate}
\end{enumerate}
\end{proposition}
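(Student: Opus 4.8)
The plan is to verify the three assertions essentially by direct computation, pushing the relevant structure down through the quotient map $P$. For part (i), I would recall that the $\widetilde{T}=S^1\times S^1$-action \eqref{equation:T-action} commutes with the defining $S^1$-action \eqref{equation:defwp}, so it descends to $\O=S^{2n+1}/S^1$; the point is simply that $P(\widetilde{T}\cdot x)=\widetilde{T}\cdot P(x)$, which is immediate from this commutativity, and that $P$ restricts to a map $\widehat{S^{2n+1}}\to\widehat{\O}$ since by definition $\widehat{\O}=P(\widehat{S^{2n+1}})$ (as noted just before the proposition). For part (ii), I would use the general fact recorded in Section \ref{subsection:basics} that the quotient projection $P$ is a submersion of orbifolds and that its differential identifies fundamental vector fields upstairs with fundamental vector fields downstairs; concretely, $Z^*$ on $\widehat{S^{2n+1}}$ is $P$-related to $\widehat{Z}$ on $\widehat{\O}$ because $P$ intertwines the $\widetilde{T}$-actions by part (i), and differentiating $\exp(tZ)$ at $t=0$ commutes with $P_*$.

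The substance is in part (iii). For (a), I would verify $\widetilde{T}$-horizontality and $\widetilde{T}$-invariance of $\kappa$ directly from formula \eqref{equation:kappa}. Horizontality in the $v$-directions is free since $\kappa$ is independent of the $V$-component, so $\kappa_{(u,v)}(0,iv_1,0)=\kappa_{(u,v)}(0,0,iv_2)=0$, which kills the fundamental vector fields $Z_1^*,Z_2^*$ of the $\widetilde{T}$-action. For $\widetilde{T}$-invariance I would check that under $(\sigma_1,\sigma_2)\cdot(u,v)=(u,\sigma_1 v_1,\sigma_2 v_2)$ the form is unchanged: the $u$-component is fixed by the action and $\kappa^k$ depends only on $u$, $U$ through the inner products $\langle j_{Z_k}u,U\rangle$ and $\langle U,iu\rangle\langle j_{Z_k}u,iu\rangle$, so the $\widetilde{T}$-action (which only scales the $v$-coordinates) leaves every term invariant.

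Part (b) is where the care lies, and I expect it to be the main obstacle. I must show $\lambda$ is admissible for the \emph{effective} $T$-action, i.e.\ $T$-horizontal and $T$-invariant in the sense of \ref{notations:O-T}(\ref{enumremark:admissible}), starting from the corresponding facts for $\kappa$ upstairs. The strategy is to transport everything through $P^*\lambda=\kappa$ of \eqref{equation:lambdakappa}: since $P$ is a $\widetilde{T}$-equivariant submersion, $T$-horizontality of $\lambda$ follows from the fact that $\kappa$ annihilates the fundamental vector fields $Z_1^*,Z_2^*$ of the lifted action together with part (ii), which identifies these with the fundamental fields $\widehat{Z}$ of the $T$-action on $\widehat{\O}$; and $T$-invariance of $\lambda$ follows from $\widetilde{T}$-invariance of $\kappa$ by the same equivariance. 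The subtlety is that the $T$-action is not the naive $\widetilde{T}$-action but its quotient by the finite subgroup $\{(\sigma,\sigma);\,\sigma^p=1\}$, so I must confirm that the fundamental vector fields and hence the relevant span $\mathfrak{t}$ are correctly matched under the isomorphism $\mathfrak{t}/{\mathcal L}\simeq T$; once one observes that passing to the quotient torus does not change the Lie algebra $\mathfrak{t}$ nor its fundamental vector fields on $\O$ (only the lattice ${\mathcal L}$ changes), the admissibility of $\lambda$ drops out of the admissibility of $\kappa$ via \eqref{equation:lambdakappa}.
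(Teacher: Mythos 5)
Your proposal is correct and follows essentially the same route the paper intends: the paper states this proposition without a separate proof, as ``basic observations'', because the computational core (independence of $\kappa$ from $V$ giving $\widetilde{T}$-horizontality, invariance of the $u$-terms under the $\widetilde{T}$-action, and descent through the $S^1$-horizontal, $S^1$-invariant form $\kappa$ via $P^*\lambda=\kappa$) is carried out in the text immediately preceding it, and your verifications reproduce exactly those steps together with the standard equivariance/differentiation argument for (i) and (ii). Your added remark that passing from $\widetilde{T}$ to the effective quotient torus $T$ changes only the lattice $\mathcal{L}$, not $\mathfrak{t}$ or the fundamental vector fields, is precisely the point that makes (iii)(b) follow from (iii)(a).
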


The following theorem, which partly generalizes \cite[Proposition 3.2.5]{MR1895349}, is now the main result of this section. 
Together with the results in Section \ref{subsection:nonisometry} it implies the existence of nontrivial pairs and families of 
isospectral metrics on $\O=\O(p,q)$.

\begin{theorem}
\label{theorem:isoorbis}
Let $j, j^\prime\co \R^2\to \mathfrak{su}(n-1)$ be isospectral linear maps and let $\lambda$ and $\lambda^\prime$ be the 
corresponding admissible $1$-forms on $\O$ given above. Then $(\O,g_\lambda)$ and $(\O,g_{\lambda^\prime})$ are isospectral 
orbifolds.
\end{theorem}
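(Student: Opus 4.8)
The plan is to reduce everything to Theorem \ref{theorem:isospect}: it suffices to produce, for every $\mu\in\mathcal{L}^*$, a $T$-equivariant isometry $F_\mu$ of $(\O,g_0)$ satisfying $\mu\circ\lambda=F_\mu^*(\mu\circ\lambda^\prime)$. The source of these isometries will be the standard linear $SU(n-1)$-action on the first factor of $\C^{n+1}=\C^{n-1}\times\C^2$, fixing $v$. Such an action commutes with the defining $S^1$-action $\sigma(u,v)=(\sigma^pu,\sigma^qv)$ (since $A\in SU(n-1)$ is complex linear and hence commutes with the scalar $\sigma^p$) and with the $\widetilde{T}$-action \eqref{equation:T-action} (which moves only $v$). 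Hence each $A\in SU(n-1)$, via $(u,v)\mapsto(Au,v)$, descends to a $T$-equivariant isometry of $(\O,g_0)$.

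Next I would identify $\mu\circ\lambda$ explicitly. Writing $W_\mu:=\mu(Z_1)Z_1+\mu(Z_2)Z_2\in\mathfrak{t}$ and using the linearity of $j$, the real-valued $1$-form $\mu\circ\kappa=\mu(Z_1)\kappa^1+\mu(Z_2)\kappa^2$ (with $\kappa=P^*\lambda$ as in \eqref{equation:lambdakappa}) is precisely formula \eqref{equation:kappa} with $j_{Z_k}$ replaced by $j_{W_\mu}$; likewise $\mu\circ\kappa^\prime$ is given by the same formula with $j^\prime_{W_\mu}$. Now I invoke the isospectrality of $j,j^\prime$ from Definition \ref{definition:isomaps}\eqref{definition:isospectral}: there is $A=A_{W_\mu}\in SU(n-1)$ with $j^\prime_{W_\mu}=A\,j_{W_\mu}A^{-1}$. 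Let $F_\mu$ be the $T$-equivariant isometry of $(\O,g_0)$ induced by $(u,v)\mapsto(Au,v)$ as above.

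The core computation is the pullback identity on the sphere, which by $P^*\lambda=\kappa$ descends to the desired identity on $\O$. With $\hat F_\mu(u,v)=(Au,v)$ and $(\hat F_\mu)_*(U,V)=(AU,V)$, I would substitute into formula \eqref{equation:kappa} for $\mu\circ\kappa^\prime$ evaluated at $(Au,v)$ on $(AU,V)$ and use three properties of $A\in SU(n-1)\subset U(n-1)$: it preserves the real inner product $\langle,\rangle$ and the induced norm, it commutes with multiplication by $i$, and it satisfies $A^{-1}j^\prime_{W_\mu}A=j_{W_\mu}$. Each of the two summands in \eqref{equation:kappa} then transforms back into the corresponding summand for $j_{W_\mu}$; for instance $\langle j^\prime_{W_\mu}(Au),AU\rangle=\langle A^{-1}j^\prime_{W_\mu}A\,u,U\rangle=\langle j_{W_\mu}u,U\rangle$ and $\langle AU,i(Au)\rangle=\langle AU,A(iu)\rangle=\langle U,iu\rangle$, while $\|Au\|^2=\|u\|^2$. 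This yields $\hat F_\mu^*(\mu\circ\kappa^\prime)=\mu\circ\kappa$, hence $F_\mu^*(\mu\circ\lambda^\prime)=\mu\circ\lambda$.

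Having verified the hypothesis of Theorem \ref{theorem:isospect} for every $\mu\in\mathcal{L}^*$, I conclude that $(\O,g_\lambda)$ and $(\O,g_{\lambda^\prime})$ are isospectral. The only genuinely delicate point is the bookkeeping in the pullback step: keeping track of where unitarity, complex linearity, and the conjugation relation are each used so that both terms of \eqref{equation:kappa} recombine correctly. The descent from $S^{2n+1}$ to $\O$ and the $T$-equivariance of $F_\mu$ are routine, given Proposition \ref{proposition:kappalambda} and the fact that $A$ acts only on the $u$-coordinate.
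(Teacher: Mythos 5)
Your proposal is correct and matches the paper's own proof essentially step for step: the paper likewise applies Theorem \ref{theorem:isospect} by setting $Z=\mu(Z_1)Z_1+\mu(Z_2)Z_2$, taking $A_Z\in SU(n-1)$ from the isospectrality of $j,j^\prime$, letting $E_\mu=(A_Z,\Id)\in SU(n-1)\times SU(2)$ act on $\C^{n-1}\times\C^2$, carrying out the same pullback computation for $\mu\circ\kappa$ using unitarity, complex linearity and the conjugation relation, and then descending to $\O$ via $P^*\lambda=\kappa$ (with the identity first checked over $\widehat{\O}$ and extended to all of $\O$ by smoothness). No gaps; the bookkeeping you flag as delicate is exactly the displayed computation in the paper.
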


\begin{proof}
To apply Theorem \ref{theorem:isospect} let $\mu\in {\mathcal L}^*\subset{\mathfrak t}^*$ and set 
\[Z:=\mu(Z_1)Z_1+\mu(Z_2)Z_2\in \mathfrak t.\] 
Then since $j$ and $j^\prime$ are isospectral, we can choose $A_Z\in SU(n-1)$ as in Definition 
\ref{definition:isomaps}(\ref{definition:isospectral}) and set $E_\mu=(A_Z,\Id)\in SU(n-1)\times SU(2)\subset SO(2n+2)$. Then 
$E_\mu$ is an isometry on $(S^{2n+1},g_0)$ and a straightforward calculation shows that with $\kappa$, $\kappa^\prime$ associated 
with $j,j^\prime$ via \eqref{equation:kappa} we have $\mu\circ\kappa=E_\mu^*(\mu\circ\kappa^\prime)$ (compare the proof of 
\cite[Proposition 3.2.5]{MR1895349}).

Note that $E_\mu$ is $S^1$-equivariant and $\widetilde{T}=S^1\times S^1$-equivariant, hence induces a $T$-equivariant isometry $F_\mu$ on $(\O,g_0)$ and for any vector $X$ tangent to $\widehat{S^{2n+1}}$ we have $(\mu\circ\lambda)(P_*X)=F_\mu^*(\mu\circ{\lambda^\prime})(P_*X)$.
Since $P_{|\widehat{S^{2n+1}}}\co \widehat{S^{2n+1}}\to\widehat{\O}$ is a manifold submersion, $F_\mu$ satisfies condition \eqref{equation:Fmu} of Theorem \ref{theorem:isospect} on $\widehat{\O}$. Since both sides of \eqref{equation:Fmu} are smooth, it is satisfied on all of $\O$.
\end{proof}

We will show in Section \ref{subsection:nonisometry} that if $j,j^\prime$ are not equivalent and at least one of them is generic, then $(\O,g_\lambda)$ and $(\O,g_{\lambda^\prime})$ are not isometric.

Moreover, since $\langle,\rangle$ on $S^{2n+1}$ has constant curvature one and our quotient map $P\co (S^{2n+1},\langle,\rangle)\to(\O,g_0)$
is a Riemannian submersion, O'Neill's curvature formula (\cite{MR0200865}) implies that after multiplying $j$ and $j^\prime$ with a sufficiently small positive real number we can assume that the metrics $g^\reg_{\lambda},g^\reg_{\lambda^\prime}$ on $\O^\reg$ are so close to $g^\reg_0$ that they have positive curvature. Therefore $(\O,g_\lambda)$, $(\O,g_{\lambda^\prime})$ cannot be nontrivial Riemannian product orbifolds and they are not of the trivial form described at the beginning of Section \ref{section:examples}.

\subsubsection{Isospectral Families}
\label{subsubsection:examples-families}
The isospectrality proof for the pair $(\O,g_\lambda)$, $(\O,g_{\lambda^\prime})$ becomes considerably simpler if $j,j^\prime$ 
belong to a continuous isospectral family $j(t)$, $t\in I$. In this setting we can alternatively apply 
Theorem~\ref{theorem:isospect} (or \cite[Theorem 1.6]{MR1895349}) directly to the sphere (with $\langle,\rangle$ replaced by a 
nonstandard metric $h_0$) to deduce that the induced metrics on the quotient are isospectral. To this end we modify 
$\langle,\rangle$ in such a way that the fibres of our $S^1$-action \eqref{equation:defwp} become totally geodesic.

Use the standard metric $\langle,\rangle$ on $S^{2n+1}$ to define a new metric $h_0$ on $S^{2n+1}$ by setting for $(u,v)\in 
S^{2n+1}$, $X,Y\in T_{(u,v)}S^{2n+1}$:
\[h_0(X,Y):=(p^2\|u\|^2+q^2\|v\|^2)^{-1} \langle X^v,Y^v\rangle + \langle X^h,Y^h\rangle,\]
where the superscripts $v$ and $h$ refer to the vertical and horizontal parts with respect to the given $S^1$-action 
\eqref{equation:defwp} on $(S^{2n+1},\langle,\rangle)$. Note that this amounts to a smooth rescaling in the vertical directions; in 
particular, the horizontal spaces are the same for $\langle,\rangle$ and $h_0$ (as are the vertical spaces, of course).

Moreover, note that the action of $\widetilde{T}$ on $S^{2n+1}$ is still isometric with respect to $h_0$. Recall from Proposition \ref{proposition:kappalambda}  that if $j\co \mathfrak{t}\simeq\R^2\to \mathfrak{su}(n-1)$ is a linear map then the associated $\mathfrak{t}$-valued $1$-form $\kappa$, defined as in \eqref{equation:kappa} is
 $\widetilde{T}$-invariant and $\widetilde{T}$-horizontal, hence
admissible with respect to the $\widetilde{T}$-action on $S^{2n+1}$. For such $\kappa$ define $h_\kappa(X,Y):=h_0(X+\kappa(X)^*,Y+\kappa(Y)^*)$. In analogy to \cite[Proposition 3.2.5]{MR1895349} (but now with the deformed metric $h_0$ instead of $\langle,\rangle$) one then has:

\begin{proposition}
\label{proposition:isospheres}
If $j, j^\prime\co {\mathfrak t}\simeq \R^2\to \mathfrak{su}(n-1)$ are isospectral in the sense of Definition \rmref{definition:isomaps}\eqref{definition:isospectral} and $\kappa,\kappa^\prime$ are the corresponding ${\mathfrak t}$-valued $1$-forms on $S^{2n+1}$ given by \eqref{equation:kappa}, then $(S^{2n+1},h_\kappa)$ and $(S^{2n+1},h_{\kappa^\prime})$ are isospectral manifolds.
\end{proposition}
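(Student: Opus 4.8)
The plan is to invoke the torus method directly on the sphere, as the text announces. Concretely, I would apply Theorem \ref{theorem:isospect} to the orbifold $(S^{2n+1}, h_0)$ equipped with the effective $\widetilde{T}=S^1\times S^1$-action from \eqref{equation:T-action}, with the admissible $1$-forms $\kappa, \kappa^\prime$ playing the roles of $\lambda, \lambda^\prime$ and $h_0$ playing the role of $g_0$. Since $S^{2n+1}$ is a manifold, Theorem \ref{theorem:isospect} applies (a manifold being a trivial orbifold), and the conclusion $(S^{2n+1}, h_\kappa) \cong_{\mathrm{spec}} (S^{2n+1}, h_{\kappa^\prime})$ is exactly what the proposition asserts. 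Thus the whole task reduces to verifying the hypothesis of Theorem \ref{theorem:isospect}: for every $\mu\in{\mathcal L}^*$ there is a $\widetilde{T}$-equivariant isometry $F_\mu$ of $(S^{2n+1}, h_0)$ with $\mu\circ\kappa = F_\mu^*(\mu\circ\kappa^\prime)$.

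First I would set, exactly as in the proof of Theorem \ref{theorem:isoorbis}, $Z := \mu(Z_1)Z_1 + \mu(Z_2)Z_2 \in {\mathfrak t}$ and pick $A_Z\in SU(n-1)$ with $j_Z^\prime = A_Z j_Z A_Z^{-1}$, which exists because $j, j^\prime$ are isospectral; then I would put $F_\mu := (A_Z, \Id)\in SU(n-1)\times SU(2)\subset SO(2n+2)$, acting on $(u,v)\in S^{2n+1}$. The identity $\mu\circ\kappa = F_\mu^*(\mu\circ\kappa^\prime)$ is then the very same pointwise computation already displayed in the proof of Theorem \ref{theorem:isoorbis} (the four-line \texttt{align*}), since the forms $\kappa,\kappa^\prime$ and the element $A_Z$ are literally the same objects; that calculation uses only the definition \eqref{equation:kappa} and $j_Z^\prime = A_Z j_Z A_Z^{-1}$, and is independent of which metric sits on the sphere. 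Because both sides are smooth $1$-forms agreeing on the dense set where the computation is carried out, they agree everywhere. The key point that remains is to confirm that $F_\mu$ is an isometry of $h_0$, not merely of $\langle,\rangle$.

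The main obstacle is therefore precisely this metric compatibility: Theorem \ref{theorem:isospect} requires $F_\mu\in\Isom(S^{2n+1}, h_0)$, whereas $A_Z$ was manufactured as an isometry of the standard metric. I would resolve this by observing that $h_0$ is built from $\langle,\rangle$ by a rescaling in the directions vertical for the $S^1$-action \eqref{equation:defwp}, with the rescaling factor $(p^2\|u\|^2 + q^2\|v\|^2)^{-1}$ depending only on $\|u\|^2, \|v\|^2$. Since $F_\mu=(A_Z,\Id)$ is unitary it preserves $\langle,\rangle$, preserves $\|u\|$ and $\|v\|$ separately, and commutes with the $S^1$-action \eqref{equation:defwp} (as $A_Z\in SU(n-1)$ commutes with scalar multiplication), so it preserves the $S^1$-vertical and $S^1$-horizontal distributions and the conformal factor. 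It follows that $F_\mu$ preserves the vertical and horizontal parts of any tangent vector and scales each block of $h_0$ by the same amount $\langle,\rangle$ is scaled, hence $F_\mu^* h_0 = h_0$. Finally, $F_\mu$ is $\widetilde{T}$-equivariant because $(A_Z,\Id)$ commutes with the action \eqref{equation:T-action}, which only multiplies the $v$-coordinates by scalars. With $F_\mu$ thus verified to be a $\widetilde{T}$-equivariant $h_0$-isometry satisfying \eqref{equation:Fmu}, Theorem \ref{theorem:isospect} yields the claimed isospectrality.
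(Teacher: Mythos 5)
Your proposal is correct and follows essentially the same route as the paper: apply Theorem \ref{theorem:isospect} to $(S^{2n+1},h_0)$ with the $\widetilde{T}$-action, reuse the isometries $E_\mu=(A_Z,\Id)$ and the computation $\mu\circ\kappa=E_\mu^*(\mu\circ\kappa^\prime)$ from the proof of Theorem \ref{theorem:isoorbis}, and note that these remain isometries of $h_0$. The paper merely asserts this last compatibility, whereas you spell out why $(A_Z,\Id)$ preserves the vertical/horizontal splitting and the conformal factor; that is a welcome elaboration, not a different argument.
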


\begin{proof}
  We had already recalled above how the isospectrality condition was used in \cite[Proposition 3.2.5]{MR1895349} to find for each 
  $\mu\in{\mathcal L}^*$ an isometry 
\[E_\mu=(A,\Id)\in SU(n-1)\times SU(2)\subset SO(2n+2)\] 
on $(S^{2n+1},\langle,\rangle)$ satisfying $\mu\circ\kappa=E_\mu^*(\mu\circ\kappa^\prime)$. Note that $E_\mu$ acts isometrically on 
$(S^{2n+1},h_0)$ as well. The proposition then follows from Theorem \ref{theorem:isospect} (or from \cite[Theorem 1.6]{MR1895349}).
\end{proof}

\begin{remark}
Using Proposition \ref{proposition:kappalambda} and the denseness of $\widehat{S^{2n+1}}$ in $S^{2n+1}$, it is not hard to see that given a fixed $j$, the induced metric $h_\kappa^{S^1}$ on our orbifold $\O=S^{2n+1}/S^1$ coincides with the metric $g_\lambda$ from \ref{notations:O-T}(\ref{enumremark:glambda}).
\end{remark}

\begin{proposition}
\label{proposition:specincl}
$\spec(\O,h_\kappa^{S^1})\subset \spec(S^{2n+1},h_\kappa)$.
\end{proposition}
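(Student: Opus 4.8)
The goal is to show the spectral inclusion $\spec(\O,h_\kappa^{S^1})\subset\spec(S^{2n+1},h_\kappa)$. The key structural fact is that $h_\kappa^{S^1}$ is the submersion metric for the Riemannian orbifold submersion $P\co(S^{2n+1},h_\kappa)\to(\O,h_\kappa^{S^1})$, where $h_0$ was precisely engineered so that the $S^1$-fibres of the action \eqref{equation:defwp} are totally geodesic. The plan is therefore to exhibit every eigenfunction on the quotient as an $S^1$-invariant eigenfunction upstairs with the same eigenvalue.

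\medskip

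First I would recall that the projection $P$ is a Riemannian orbifold submersion and that $P^*$ identifies smooth functions on $\O$ with the $S^1$-invariant smooth functions on $S^{2n+1}$; concretely, for $f\in C^\infty(\O)$ one sets $\tilde f:=f\circ P\in C^\infty(S^{2n+1})^{S^1}$. The crucial analytic point is that the Laplacians intertwine: because the fibres are totally geodesic with respect to $h_\kappa$, the mean-curvature term in the O'Neill/submersion formula for the Laplacian vanishes, and one obtains $\Delta_{h_\kappa}(f\circ P)=(\Delta_{h_\kappa^{S^1}}f)\circ P$ on $S^{2n+1}$. This is the standard fact that for a Riemannian submersion with totally geodesic (equivalently minimal, in this $S^1$-fibre setting) fibres, the horizontal Laplacian of a basic function reduces to the base Laplacian, and here it holds on the regular part $\widehat{S^{2n+1}}\to\widehat{\O}$ where $P$ is a genuine manifold submersion, then extends to all of $S^{2n+1}$ by density since both sides are smooth.

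\medskip

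With the intertwining in hand the argument is immediate. Let $\mu\in\spec(\O,h_\kappa^{S^1})$ with eigenfunction $f$, so $\Delta_{h_\kappa^{S^1}}f=\mu f$. Then $\tilde f=f\circ P$ is a nonzero $S^1$-invariant smooth function on $S^{2n+1}$ satisfying
\[\Delta_{h_\kappa}\tilde f=(\Delta_{h_\kappa^{S^1}}f)\circ P=(\mu f)\circ P=\mu\,\tilde f,\]
so $\mu$ is an eigenvalue of $\Delta_{h_\kappa}$ on $(S^{2n+1},h_\kappa)$ and hence $\mu\in\spec(S^{2n+1},h_\kappa)$. Since $\tilde f\ne 0$ (as $P$ is surjective and $f\ne0$), this establishes the claimed inclusion. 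If one wishes to track multiplicities, one observes that $P^*$ is injective and maps the $\mu$-eigenspace on $\O$ into the $S^1$-invariant part of the $\mu$-eigenspace on $S^{2n+1}$; but for the stated inclusion of spectra (as sets) injectivity of $P^*$ suffices.

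\medskip

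The main obstacle is justifying the intertwining identity $\Delta_{h_\kappa}(f\circ P)=(\Delta_{h_\kappa^{S^1}}f)\circ P$ in the orbifold setting rather than taking it for granted. One must verify that the fibres really are totally geodesic for $h_\kappa$ (and not merely for $h_0$): here the fibres of the $S^1$-action and the vertical/horizontal splitting are unchanged when passing from $h_0$ to $h_\kappa$, since $h_\kappa$ differs from $h_0$ only by the unipotent substitution $X\mapsto X+\kappa(X)^*$ which fixes the $S^1$-vertical distribution (as $\kappa$ is $S^1$-horizontal), so the totally-geodesic property built into $h_0$ is preserved. The identity is first checked on the manifold part $\widehat{S^{2n+1}}$ via the classical submersion formula for the Laplacian and then propagated to the singular orbits by the density of $\widehat{S^{2n+1}}$ in $S^{2n+1}$ together with the continuity (smoothness) of both functions, exactly as such extension arguments were used in the proof of Theorem \ref{theorem:isoorbis}.
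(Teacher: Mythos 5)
Your overall route is the same as the paper's --- show that the fibres of $P\co (S^{2n+1},h_\kappa)\to(\O,h_\kappa^{S^1})$ are totally geodesic, then pull eigenfunctions back using the density of $\widehat{S^{2n+1}}$ --- and your second half (the intertwining $\Delta_{h_\kappa}(f\circ P)=(\Delta_{h_\kappa^{S^1}}f)\circ P$ for minimal fibres, plus injectivity of $P^*$) is fine and even more explicit than the paper's. The genuine gap is in your verification that the fibres are totally geodesic for $h_\kappa$: the claim that ``the vertical/horizontal splitting is unchanged when passing from $h_0$ to $h_\kappa$'' is false. The substitution $X\mapsto X+\kappa(X)^*$ adds multiples of the $\widetilde{T}$-fundamental fields $Z_1^*,Z_2^*$, and these are \emph{not} $h_0$-orthogonal to the $S^1$-direction: writing $K_{(u,v)}=(ipu,iqv)$ for the generator of \eqref{equation:defwp}, one computes $h_0(Z_j^*,K)=q|v_j|^2/\bigl(p^2\|u\|^2+q^2\|v\|^2\bigr)\neq 0$ on $\widehat{S^{2n+1}}$, whence
\[h_\kappa(X,K)=h_0(X,K)+\kappa^1(X)\,h_0(Z_1^*,K)+\kappa^2(X)\,h_0(Z_2^*,K),\]
which differs from $h_0(X,K)$ for suitable $X$ wherever $\kappa$ does not vanish. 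So the $h_\kappa$-horizontal distribution of the $S^1$-action genuinely tilts away from the $h_0$-horizontal one, and your inference ``splitting unchanged $\Rightarrow$ totally geodesic preserved'' starts from a false premise. Note also that you never actually prove the base claim that $h_0$ has totally geodesic fibres; you only cite the paper's announced intention, so at this point in your argument nothing has been verified.

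Both defects are cured at once by the argument the paper actually gives, which uses only the (true) fact you do record, namely $\kappa(K)=0$: it implies that $h_\kappa$ agrees with $h_0$ on $S^1$-vertical vectors, hence $h_\kappa(K,K)=h_0(K,K)=\bigl(p^2\|u\|^2+q^2\|v\|^2\bigr)^{-1}\langle K,K\rangle\equiv 1$. Since $h_\kappa$ is $S^1$-invariant, $K$ is a Killing field for $h_\kappa$ of constant norm --- equivalently, all regular orbits have $h_\kappa$-length $2\pi$, which is how the paper phrases it --- and a Killing field of constant norm has geodesic integral curves because $\nabla_KK=-\tfrac12\grad\|K\|^2=0$. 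Thus the one-dimensional fibres are totally geodesic for $h_\kappa$ (and for $h_0$) in one stroke, with no comparison of horizontal distributions and no appeal to ``engineering''. With that substitution your proof goes through.
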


\begin{proof}
With respect to the metric $h_0$ all regular $S^1$-orbits are easily seen to have length $2\pi$.
Since $\kappa$ is $S^1$-horizontal, we obtain the same result with respect to $h_\kappa$ 
and hence the Riemannian manifold submersion \[P\co (\widehat{S^{2n+1}},h_\kappa)\to(\widehat{S^{2n+1}}/S^1,h_\kappa^{S^1})\] has 
totally geodesic fibres. Together with the denseness of $\widehat{S^{2n+1}}$ in $S^{2n+1}$ this implies that every eigenfunction on 
$\O$ pulls back to an eigenfunction on $S^{2n+1}$ associated with the same eigenvalue. Since linear independence is also preserved 
by $P^*$, the proposition follows.
\end{proof}

\begin{remark}
For the spectrum in the setting of Riemannian orbifold submersions with totally geodesic fibres also compare \cite{MR2203544}.
\end{remark}

Finally, we obtain the following proposition, which is actually just a special case of Theorem \ref{theorem:isoorbis}.

\begin{proposition}
Given a continuous isospectral family of linear maps $j(t)\co  \mathfrak{t}\to\mathfrak{su}(n-1),t\in I$, the associated Riemannian metrics $h_{\kappa(t)}^{S^1}=g_{\lambda(t)}$ on $\O=\O(p,q)=S^{2n+1}/S^1$ form a continuous family of isospectral metrics on the orbifold $\O$.
\end{proposition}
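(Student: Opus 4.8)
The plan is to derive this proposition directly from the results already established, since it is essentially a corollary that packages together the isospectrality and the identification of metrics proved just above. First I would observe that the statement makes two claims for the family $j(t)$, $t\in I$: that the metrics $h_{\kappa(t)}^{S^1}$ on $\O$ coincide with the metrics $g_{\lambda(t)}$ constructed via the torus method, and that these metrics are pairwise isospectral. The first claim is exactly the content of the Remark following Proposition \ref{proposition:isospheres}, applied to each $j(t)$ individually; there is nothing to add beyond citing that the induced submersion metric $h_{\kappa(t)}^{S^1}$ equals $g_{\lambda(t)}$ for every $t$. So the substance reduces to proving pairwise isospectrality.

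For the isospectrality I would proceed as follows. Fix $t_1,t_2\in I$. Since $j(t_1)$ and $j(t_2)$ lie in a continuous isospectral family, they are in particular isospectral in the sense of Definition \ref{definition:isomaps}(\ref{definition:isospectral}). I would then invoke Theorem \ref{theorem:isoorbis} directly with $j=j(t_1)$ and $j^\prime=j(t_2)$: that theorem states precisely that isospectral linear maps yield isospectral orbifolds $(\O,g_{\lambda(t_1)})$ and $(\O,g_{\lambda(t_2)})$. Combining this with the identification $h_{\kappa(t_i)}^{S^1}=g_{\lambda(t_i)}$ gives the pairwise isospectrality of the family. Because $t_1,t_2$ were arbitrary, the whole family $\{h_{\kappa(t)}^{S^1}\}_{t\in I}$ is isospectral, and continuity of the family in $t$ follows from the continuity of $t\mapsto j(t)$ together with the continuous dependence of $\kappa(t)$, and hence $\lambda(t)$, on $j(t)$ through the explicit formula \eqref{equation:kappa}.

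Alternatively, one could bypass Theorem \ref{theorem:isoorbis} and argue through the sphere, which is the route the surrounding text advertises as simpler in the family case: apply Proposition \ref{proposition:isospheres} to conclude that $(S^{2n+1},h_{\kappa(t_1)})$ and $(S^{2n+1},h_{\kappa(t_2)})$ are isospectral manifolds, then use Proposition \ref{proposition:specincl} to transfer spectral information down to the quotient. The subtlety here is that Proposition \ref{proposition:specincl} only gives the inclusion $\spec(\O,h_\kappa^{S^1})\subset\spec(S^{2n+1},h_\kappa)$, not equality, so one cannot simply read off the orbifold spectrum from the sphere spectrum. To make this route work one would need to control which sphere-eigenfunctions descend — namely the $S^1$-invariant ones — and check that the isospectral correspondence between the two spheres respects $S^1$-invariance, so that the invariant parts of the two spectra coincide.

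I expect the only real obstacle to be recognizing that almost all the work has already been done, so the proof is genuinely short: the main point is that the hypotheses of Theorem \ref{theorem:isoorbis} are met for every pair in the family. If one instead pursues the sphere-based argument, the hard part shifts to justifying that the spectral inclusion of Proposition \ref{proposition:specincl} becomes an equality on the relevant invariant subspaces — but since the proposition is explicitly labeled a special case of Theorem \ref{theorem:isoorbis}, the cleanest and intended proof is simply to cite that theorem together with the identification $h_{\kappa(t)}^{S^1}=g_{\lambda(t)}$.
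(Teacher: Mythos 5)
Your primary route is correct: every pair $j(t_1),j(t_2)$ in a continuous isospectral family is isospectral in the sense of Definition \ref{definition:isomaps}(\ref{definition:isospectral}), so Theorem \ref{theorem:isoorbis} applies pairwise, and together with the identification $h_{\kappa(t)}^{S^1}=g_{\lambda(t)}$ from the Remark this proves the proposition; indeed the paper itself introduces the statement as ``just a special case of Theorem \ref{theorem:isoorbis}''. However, the paper's own proof deliberately takes a different route, and your sketch of that route misses its key trick. The paper writes the spectrum of $(\O,h_{\kappa(t)}^{S^1})$ as eigenvalue branches $0=\mu_0(t)<\mu_1(t)\le\mu_2(t)\le\ldots$, notes that each $\mu_i\co I\to[0,\infty)$ is continuous in $t$ (as in the compact manifold setting, via the variational characterization of Theorem \ref{theorem:variation}), and then observes that by Proposition \ref{proposition:specincl} combined with Proposition \ref{proposition:isospheres} the image of each $\mu_i$ lies in a \emph{fixed} discrete set, namely the common spectrum $\spec(S^{2n+1},h_{\kappa(t)})$, which by Proposition \ref{proposition:isospheres} does not depend on $t$; since $I$ is connected, each $\mu_i$ must be constant. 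This is exactly why the mere inclusion $\spec(\O,h_\kappa^{S^1})\subset\spec(S^{2n+1},h_\kappa)$ suffices---there is no need to upgrade it to an equality on $S^1$-invariant subspaces, which you identified as the obstacle to the sphere-based argument. As for what each approach buys: your citation of Theorem \ref{theorem:isoorbis} is shorter but rests on the full orbifold torus method of Section \ref{section:torus-method}, whereas the paper's continuity-plus-discreteness argument is independent of that section (Proposition \ref{proposition:isospheres} only needs the manifold version of the torus method, applied to the sphere), which is precisely why the paper offers it---at the cost of working only for connected families rather than arbitrary isospectral pairs.
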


\begin{proof}
We write $0=\mu_0(t)<\mu_1(t)\le\mu_2(t)\le\dotsb$ for the spectrum of
$(\O,h_{\kappa(t)}^{S^1})$ and note that
each of these functions $\mu_i\co I\to[0,\infty)$ is continuous (as can be seen as in the compact manifold setting using Theorem 
\ref{theorem:variation}). From Proposition \ref{proposition:specincl} in connection with Proposition \ref{proposition:isospheres} 
we deduce that the image of each $\mu_i$ is discrete. Since $I$ is connected, this implies that each $\mu_i$ is constant.
\end{proof}

\subsection{Nonisometry}
\label{subsection:nonisometry}
In this section we will argue that if $j,j^\prime$ are not equivalent and at least one of them is generic in the sense of Definition \ref{definition:isomaps}, then the corresponding metrics $g_\lambda$ and $g_{\lambda^\prime}$ on $\O=\O(p,q)=S^{2n+1}/S^1$ are not isometric. This will be a consequence of Proposition \ref{proposition:example-NG} and the nonisometry criterion in Section \ref{subsection:general-nonisometry}. Together with the results from Subsection \ref{subsection:isometrics} we will finally obtain the main result of this paper (Theorem \ref{theorem:maintheorem}).

Some of the arguments below are inspired by ideas in \cite{rueckriemen06}. (However, we do not use concrete results due to a 
mistake in \cite[Remark 5.9]{rueckriemen06}, compare the first step in the proof of Proposition \ref{proposition:example-NG}.) 
Before we can use the criterion from Proposition \ref{proposition:nonisometry}, we need some preliminary observations. 
Proposition~\ref{proposition:example-NG} will then be a consequence of Lemmas \ref{lemma:Oab} and \ref{lemma:DinE}. As usual, we 
will use the canonical metrics and the corresponding submersion metrics unless otherwise stated.

Let $\widetilde{T}=(S^1)^2$ act on $\C^{n-1}\setminus\{0\}\times(\C^*)^2$ by multiplication in the last two components and consider the following isometric $S^1$-actions, where $\sigma\in S^1\subset \C$, $u\in\C^{n-1}\setminus\{0\}$, $v\in(\C^*)^2$, $a,b\in\R_{>0}$:
\begin{itemize}
 \item On $(\C^{n-1}\setminus\{0\}\times(\C^*)^2)/\widetilde{T}$ set $\sigma\qak{(u,v)}:=\qak{(\sigma^pu,\sigma^qv)}$.
 \item On $\C^{n-1}\setminus\{0\}\times\R_{>0}\times\R_{>0}$ set $\sigma(u,a,b):=(\sigma^pu,a,b)$.
\end{itemize}
With respect to these actions, the isometry
\[(\C^{n-1}\setminus\{0\}\times(\C^*)^2)/\widetilde{T}\ni\qak{(u,v)}\mapsto(u,|v_1|,|v_2|)\in \C^{n-1}\setminus\{0\}\times\R_{>0}\times\R_{>0}\]
is $S^1$-equivariant.
Now recall from Section \ref{subsubsection:isospectral-pairs} that
\[\widehat{S^{2n+1}}=\{(u,v)\in\C^{n-1}\times\C^2;~\|u\|^2+\|v\|^2=1, u\ne 0, v_j\ne 0~\forall j=1,2\}\]
and restrict the $S^1$-equivariant isometry above to the $S^1$-invariant submanifold $\widehat{S^{2n+1}}/\widetilde{T}$ of $(\C^{n-1}\setminus\{0\}\times(\C^*)^2)/\widetilde{T}$. Factoring out the $S^1$-actions gives an isometry
\begin{equation}
  \label{equation:phi}
  \Phi\co \widehat{\O}/\widetilde{T}\to N/S^1,
\end{equation}
where $N:=\{(u,a,b)\in\C^{n-1}\setminus\{0\}\times \R_{>0}\times\R_{>0};~\|u\|^2+a^2+b^2=1\}\subset S^{2n-1}$.

Note that the $S^1$-actions above are not effective. However, the quotient of $S^1$ by the $p$-roots of unity acts freely and it is the smooth structures induced by these free actions that we refer to. Analogously, $\widehat{\O}/\widetilde{T}$ is just $\widehat{\O}/T$ with $T=\widetilde{T}/\{(\sigma,\sigma)\in \widetilde{T};~\sigma^p=1\}$ acting freely on $\widehat{\O}$.

Recall from Section \ref{subsection:general-nonisometry} that $\pi\co \widehat{\O}\to\widehat{\O}/T$ denotes the quotient map. For $a,b>0$ with $a^2+b^2<1$ set
\begin{align*}
S_{a,b}&:=(S^{2n-3}(\sqrt{1-a^2-b^2})\times\{(a,b)\})/S^1\subset N/S^1,\\
\O_{a,b}&:=\pi^{-1}(\Phi^{-1}(S_{a,b}))\subset \widehat{\O}.
\end{align*}
Since $\pi$ is a manifold submersion, $\O_{a,b}$ is a $T$-invariant submanifold of $\widehat{\O}$. By definition, under the isometry $\Phi$ the manifold $\O_{a,b}/T$ corresponds to 
$S_{a,b}\stackrel{\text{isom.}}{\simeq}(\CP^{n-2},(1-a^2-b^2)g_\text{FS})$,
where $g_\text{FS}$ denotes the Fubini--Study metric on $\CP^{n-2}$.

For $x\in \widehat{S^{2n+1}}$ consider the diffeomorphism 
\[r^x\co \widetilde{T}=S^1\times S^1\ni(\sigma_1,\sigma_2)\mapsto(\sigma_1,\sigma_2) x\in \widetilde{T}x\subset \widehat{S^{2n+1}}\]
and the corresponding immersion 
\[r^\qao{x}\co \widetilde{T}=S^1\times S^1\ni(\sigma_1,\sigma_2)\mapsto(\sigma_1,\sigma_2) \qao{x}\in \widetilde{T}\qao{x}\subset \widehat{S^{2n+1}}/S^1=\widehat{\O}.\]
Note that $r^\qao{x}=P\circ r^x$ for $P\co \widehat{S^{2n+1}}\to\widehat{S^{2n+1}}/S^1=\widehat{\O}$ the canonical projection. In the following calculations we will use our convention that on $\O$ the bracket $\langle,\rangle$ stands for $g_0$. A straightforward calculation shows:
\begin{proposition}
\label{proposition:skalprodukt}
Let $A_j, B_j\in\R$, $\sigma_j\in S^1\subset\C$ for $j=1,2$, and set 
\[A:=(iA_1\sigma_1,iA_2\sigma_2), B:=(iB_1\sigma_1,iB_2\sigma_2)\in T_{(\sigma_1,\sigma_2)}(S^1\times S^1)\subset\C^2.\]
Moreover, let $x=(u,v)\in \widehat{S^{2n+1}}$ with $u\in\C^{n-1}$, $v\in\C^2$. Then
\[\langle r_*^\qao{x}A,r_*^\qao{x}B\rangle=\sum_{j=1}^2 A_jB_j|v_j|^2-\frac{q^2(\sum_j A_j|v_j|^2)(\sum_j B_j|v_j|^2)}{p^2\|u\|^2+q^2\|v\|^2}.\]
\end{proposition}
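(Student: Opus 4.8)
The plan is to lift the computation to the sphere via the identity $r^{\qao{x}}=P\circ r^x$ recorded just before the statement, and to exploit that $P\co(S^{2n+1},\langle,\rangle)\to(\O,g_0)$ is a Riemannian submersion. This gives $r_*^{\qao{x}}A=P_*(r_*^x A)$, and by definition of the submersion metric $\langle P_*W_1,P_*W_2\rangle_{g_0}=\langle W_1^h,W_2^h\rangle$, where $W^h$ denotes the horizontal part of $W\in T\widehat{S^{2n+1}}$ with respect to the defining $S^1$-action \eqref{equation:defwp}. Thus the whole statement reduces to an orthogonal projection in the ambient $\C^{n+1}$ with the real inner product $\langle X,Y\rangle=\re\sum_i X_i\bar{Y}_i$.

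First I would compute the two pushforwards on the sphere. Differentiating the curve $t\mapsto(\sigma_1 e^{iA_1 t},\sigma_2 e^{iA_2 t})\qao{x}$ at $t=0$ and using the $\widetilde{T}$-action \eqref{equation:T-action} yields $r_*^x A=(0,\,iA_1\sigma_1 v_1,\,iA_2\sigma_2 v_2)\in\C^{n-1}\times\C^2$, and likewise for $B$. Next I would identify the vertical space of the action \eqref{equation:defwp} at the image point $(u,\sigma_1 v_1,\sigma_2 v_2)$: it is the real line spanned by the fundamental vector field $\xi:=(ipu,\,iq\sigma_1 v_1,\,iq\sigma_2 v_2)$. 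Since both $r_*^x A$ and $\xi$ are tangent to $S^{2n+1}$, the horizontal projection coincides with the ambient orthogonal projection off $\xi$.

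The remaining step is pure arithmetic, using $|\sigma_j|=1$ throughout. One finds $\langle r_*^x A,r_*^x B\rangle=\sum_j A_jB_j|v_j|^2$, then $\langle r_*^x A,\xi\rangle=q\sum_j A_j|v_j|^2$ (the $u$-component of $\xi$ contributes nothing because $r_*^x A$ has none), and $\langle\xi,\xi\rangle=p^2\|u\|^2+q^2\|v\|^2$. Substituting these into the projection identity $\langle(r_*^x A)^h,(r_*^x B)^h\rangle=\langle r_*^x A,r_*^x B\rangle-\langle r_*^x A,\xi\rangle\langle r_*^x B,\xi\rangle/\langle\xi,\xi\rangle$ produces exactly the claimed formula.

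There is no serious obstacle: the statement is essentially a bookkeeping exercise. The two points needing care are (i) correctly pinning down the one-dimensional vertical distribution of \eqref{equation:defwp}, so that the weights $p$ and $q$ appear in the right positions, and (ii) performing the orthogonal projection with respect to the round metric $\langle,\rangle$ on $S^{2n+1}$ rather than the submersion metric, the submersion property of $P$ then transporting the result down to $(\O,g_0)$.
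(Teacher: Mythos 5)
Your proof is correct and is precisely the ``straightforward calculation'' the paper alludes to without writing out: lift via $r^{\qao{x}}=P\circ r^x$, use that $P\co(\widehat{S^{2n+1}},\langle,\rangle)\to(\widehat{\O},g_0)$ is a Riemannian submersion, and project off the vertical direction $\xi=(ipu,iq\sigma_1v_1,iq\sigma_2v_2)$ of the action \eqref{equation:defwp}. All three inner products and the final substitution check out, so nothing is missing.
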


Recall that $Z_1=(i,0), Z_2=(0,i)$ denote the standard basis of $\mathfrak{t}=T_{(1,1)}(S^1\times S^1)\subset\C^2$ and that for $Z\in\mathfrak{t}$ the symbol $\widehat{Z}$ denotes the fundamental manifold vector field associated with $Z$ with respect to the action of $T$ (or, equivalently, $\widetilde{T}$) on $\widehat{\O}$. Moreover, note that Proposition \ref{proposition:kappalambda}(\ref{proposition:kappalambda:1}) implies $\widehat{Z_k}\circ P=P_*\circ Z_k^*=P_*{r^\cdot_*}_{(1,1)}Z_k={r^{[\cdot]}_*}_{(1,1)}Z_k$.

\begin{corollary}
\label{corollary:skalprod}
For $j,k\in\{1,2\}$ and $x=(u,v)\in \widehat{S^{2n+1}}$ we have
\[\langle {\widehat{Z_j}}_\qao{x},{\widehat{Z_k}}_\qao{x}\rangle=\delta_{jk}|v_j|^2-\frac{q^2|v_j|^2|v_k|^2}{p^2\|u\|^2+q^2\|v\|^2}.\]
\end{corollary}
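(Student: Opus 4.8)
The plan is to obtain Corollary \ref{corollary:skalprod} as a direct specialization of Proposition \ref{proposition:skalprodukt}. The key observation recorded just before the corollary is the relation ${\widehat{Z_k}}_{\qao{x}}={r^{[\cdot]}_*}_{(1,1)}Z_k$, which expresses each fundamental vector field $\widehat{Z_k}$ at the point $\qao{x}$ as the pushforward under $r^{\qao{x}}_*$ of the standard basis vector $Z_k$ at the identity $(1,1)\in S^1\times S^1$. Therefore it suffices to evaluate the bilinear form in Proposition \ref{proposition:skalprodukt} on the pair $Z_j,Z_k$.

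First I would match the abstract tangent vectors $A,B$ of the proposition to the concrete choices $A=Z_j$ and $B=Z_k$. Taking the base point $(\sigma_1,\sigma_2)=(1,1)$, the vector $Z_j=(i,0)$ or $(0,i)$ corresponds to setting $\sigma_1=\sigma_2=1$ and choosing the coefficients to be Kronecker deltas: for $A=Z_j$ we put $A_\ell=\delta_{\ell j}$, and for $B=Z_k$ we put $B_\ell=\delta_{\ell k}$. With these substitutions the products $A_\ell B_\ell$ and the weighted sums simplify immediately.

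Next I would carry out the substitution in the explicit formula. The first sum $\sum_{\ell=1}^2 A_\ell B_\ell |v_\ell|^2$ collapses to $\delta_{jk}|v_j|^2$, since $A_\ell B_\ell=\delta_{\ell j}\delta_{\ell k}$ is nonzero only when $\ell=j=k$. Likewise $\sum_\ell A_\ell|v_\ell|^2=|v_j|^2$ and $\sum_\ell B_\ell|v_\ell|^2=|v_k|^2$, so the subtracted term becomes $q^2|v_j|^2|v_k|^2/(p^2\|u\|^2+q^2\|v\|^2)$. Combining these yields exactly the stated formula
\[\langle {\widehat{Z_j}}_{\qao{x}},{\widehat{Z_k}}_{\qao{x}}\rangle=\delta_{jk}|v_j|^2-\frac{q^2|v_j|^2|v_k|^2}{p^2\|u\|^2+q^2\|v\|^2}.\]

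This corollary is genuinely routine once Proposition \ref{proposition:skalprodukt} is in hand, so I anticipate no real obstacle; the only point requiring a moment's care is confirming the identification ${\widehat{Z_k}}_{\qao{x}}={r^{[\cdot]}_*}_{(1,1)}Z_k$, which the text has already supplied via Proposition \ref{proposition:kappalambda}(\ref{proposition:kappalambda:1}) and the naturality of $P$. Thus the entire proof reduces to plugging the basis vectors into the bilinear form and simplifying.
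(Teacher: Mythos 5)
Your proof is correct and is exactly the paper's argument: the paper likewise proves the corollary by applying Proposition \ref{proposition:skalprodukt} to $\widehat{Z_j}_{\qao{x}}=r_*^{\qao{x}}Z_j$ and $\widehat{Z_k}_{\qao{x}}=r_*^{\qao{x}}Z_k$ at $(\sigma_1,\sigma_2)=(1,1)$, which corresponds to your Kronecker-delta choice of coefficients. The simplification of the sums is carried out correctly, so there is nothing to add.
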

\begin{proof}
Apply Proposition \ref{proposition:skalprodukt} to ${\widehat{Z_1}}_{\qao{x}}={r_*^{\qao{x}}}Z_1$ and ${\widehat{Z_2}}_{\qao{x}}={r_*^{\qao{x}}}Z_2$ in $\sigma=(1,1)$.
\end{proof}

\begin{corollary}
\label{corollary:winkel}
For $\qao{x}\in \O_{a,a}$ we have:
\begin{align}
\label{corollary:winkel:1} \langle {\widehat{Z_j}}_{\qao{x}},{\widehat{Z_k}}_{\qao{x}}\rangle&=\delta_{jk}a^2-\frac{q^2a^4}{p^2(1-2a^2)+2q^2a^2},
\\
\label{corollary:winkel:2} \angle({\widehat{Z_1}}_{\qao{x}},{\widehat{Z_2}}_{\qao{x}})&=\arccos\frac{-q^2a^2}{p^2(1-2a^2)+q^2a^2}.
\end{align}
\end{corollary}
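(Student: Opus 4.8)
The plan is to derive both parts of Corollary~\ref{corollary:winkel} as direct specializations of Corollary~\ref{corollary:skalprod}, using only the geometric description of the set $\O_{a,a}$. The key observation is that a point $\qao{x}=\qao{(u,v)}$ lies in $\O_{a,a}$ precisely when, under the isometry $\Phi$ from \eqref{equation:phi}, it corresponds to a point of $S_{a,a}$, which forces $|v_1|=|v_2|=a$ (the last two coordinates of the image $(u,|v_1|,|v_2|)$ are both equal to $a$) and consequently $\|v\|^2=|v_1|^2+|v_2|^2=2a^2$ together with $\|u\|^2=1-2a^2$ from the sphere condition $\|u\|^2+\|v\|^2=1$.

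First I would substitute $|v_j|^2=a^2$ for $j=1,2$ and the identity $\|u\|^2+\|v\|^2=1$ into the formula of Corollary~\ref{corollary:skalprod}. The denominator $p^2\|u\|^2+q^2\|v\|^2$ becomes $p^2(1-2a^2)+q^2(2a^2)=p^2(1-2a^2)+2q^2a^2$, and the numerator $q^2|v_j|^2|v_k|^2$ becomes $q^2a^4$. This immediately yields
\[
\langle {\widehat{Z_j}}_{\qao{x}},{\widehat{Z_k}}_{\qao{x}}\rangle
=\delta_{jk}a^2-\frac{q^2a^4}{p^2(1-2a^2)+2q^2a^2},
\]
which is part~\eqref{corollary:winkel:1}.

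For part~\eqref{corollary:winkel:2} I would apply the standard formula $\angle(v,w)=\arccos\bigl(\langle v,w\rangle/(\|v\|\,\|w\|)\bigr)$. From part~\eqref{corollary:winkel:1}, the common squared norm (the case $j=k$) is
\[
\|{\widehat{Z_1}}_{\qao{x}}\|^2=\|{\widehat{Z_2}}_{\qao{x}}\|^2
=a^2-\frac{q^2a^4}{p^2(1-2a^2)+2q^2a^2}
=\frac{a^2\bigl(p^2(1-2a^2)+q^2a^2\bigr)}{p^2(1-2a^2)+2q^2a^2},
\]
while the inner product (the case $j\ne k$) is $-q^2a^4/\bigl(p^2(1-2a^2)+2q^2a^2\bigr)$. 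Forming the quotient, the common denominator $p^2(1-2a^2)+2q^2a^2$ cancels and one factor $a^2$ cancels, leaving exactly $-q^2a^2/\bigl(p^2(1-2a^2)+q^2a^2\bigr)$, so that
\[
\angle({\widehat{Z_1}}_{\qao{x}},{\widehat{Z_2}}_{\qao{x}})
=\arccos\frac{-q^2a^2}{p^2(1-2a^2)+q^2a^2}.
\]
The proof is entirely computational and presents no genuine obstacle; the only point requiring a moment of care is justifying that membership in $\O_{a,a}$ really does pin down $|v_1|=|v_2|=a$, which follows from tracing through the definitions of $S_{a,b}$, $\Phi$, and the equivariant isometry $\qao{(u,v)}\mapsto(u,|v_1|,|v_2|)$ restricted to the diagonal value $b=a$.
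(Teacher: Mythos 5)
Your proof is correct and follows exactly the paper's route: the paper likewise obtains (i) by specializing Corollary~\ref{corollary:skalprod} to points of $\O_{a,a}$ (where $|v_1|=|v_2|=a$ and $\|u\|^2=1-2a^2$) and deduces (ii) from (i) via the angle formula. Your write-up just makes explicit the bookkeeping (the identification of $\O_{a,a}$ through $\Phi$ and the norm/inner-product quotient) that the paper leaves to the reader.
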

\begin{proof}
\eqref{corollary:winkel:1} follows directly from Corollary \ref{corollary:skalprod}. \eqref{corollary:winkel:2}  then follows from \eqref{corollary:winkel:1}.
\end{proof}

Moreover, since $T$ is abelian and acts by isometries, we can make the following observation.

\begin{lemma}
\label{lemma:torus-metric}
Given $a,b>0$ with $a^2+b^2<1$ and $\qao{x}\in\O_{a,b}$, the map $R^\qao{x}\co T\ni z\mapsto z\qao{x}\in \widehat{\O}$ is an embedding and the pull-back by $R^\qao{x}$ of the metric $g_0=\langle,\rangle$ to $T$ is left-invariant and associated with the inner product
$\mathfrak{t}^2\ni(Y_1,Y_2)\mapsto \left\langle \widehat{Y_1}_\qao{x},\widehat{Y_2}_\qao{x}\right\rangle\in\R$.
\end{lemma}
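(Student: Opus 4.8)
The plan is to verify the two assertions separately: first that $R^\qao{x}$ is an embedding, and then that the pull-back metric is left-invariant with the stated associated inner product. Both follow from general facts about the free isometric $T$-action on $\widehat{\O}$ recorded above, so essentially no new computation is needed; the explicit values of the resulting inner product on the basis $Z_1,Z_2$ are of course those already found in Corollary \ref{corollary:skalprod}.

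For the embedding, recall that $T$ is a torus (hence compact) acting freely on $\widehat{\O}$, so the action is automatically proper. First I would check that $R^\qao{x}$ is injective: if $z_1\qao{x}=z_2\qao{x}$ then $z_2^{-1}z_1$ lies in the $T$-stabilizer of $\qao{x}$, which is trivial by freeness, whence $z_1=z_2$. Next, $R^\qao{x}$ is an immersion at $e$: its differential there sends $Y\in\mathfrak{t}=T_eT$ to $\frac{d}{dt}\big|_{t=0}\exp(tY)\qao{x}=\widehat{Y}_\qao{x}$, and this assignment is injective because $\widehat{Y}_\qao{x}=0$ for some $Y\neq 0$ would force the one-parameter subgroup $\exp(tY)$ to fix $\qao{x}$, again contradicting freeness. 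By equivariance the differential is then injective at every point of $T$ as well. Finally, since $T$ is compact and $\widehat{\O}$ Hausdorff, the continuous injection $R^\qao{x}$ is a homeomorphism onto its image, so it is an embedding.

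For the metric, write $L_w\co T\to T$, $z\mapsto wz$ for left translation and let $\phi_w$ denote the action of $w\in T$ on $\widehat{\O}$. The identity $R^\qao{x}\circ L_w=\phi_w\circ R^\qao{x}$ holds by the action axioms, and since $T$ acts isometrically we have $\phi_w^*g_0=g_0$; hence
\[L_w^*\big((R^\qao{x})^*g_0\big)=(R^\qao{x}\circ L_w)^*g_0=(R^\qao{x})^*\phi_w^*g_0=(R^\qao{x})^*g_0,\]
so $(R^\qao{x})^*g_0$ is left-invariant and is therefore determined by its value at $e$. By the computation of the differential above, for $Y_1,Y_2\in\mathfrak{t}$ we obtain $\big((R^\qao{x})^*g_0\big)_e(Y_1,Y_2)=\langle\widehat{Y_1}_\qao{x},\widehat{Y_2}_\qao{x}\rangle$, which is exactly the claimed inner product.

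The content here is light, and the only point deserving care is the passage from immersion to embedding: this relies crucially on the $T$-action on $\widehat{\O}$ being free (so that the orbit is a genuine copy of $T$ rather than a torus of lower dimension) together with compactness of $T$ (ruling out a non-closed, densely wound orbit image). Both facts were secured in Section \ref{subsection:nonisometry}, so I expect no serious obstacle.
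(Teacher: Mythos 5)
Your proof is correct: the paper states this lemma without proof, merely as an observation following from the fact that $T$ is abelian and acts isometrically and freely on $\widehat{\O}$, and your argument is exactly the standard fleshing-out of that remark (freeness gives injectivity and the immersion property, compactness of $T$ upgrades this to an embedding, equivariance of the orbit map plus isometric action gives left-invariance, and evaluating the differential at $e$ identifies the inner product). Nothing in your write-up conflicts with the paper; you even correctly note that the key inputs (freeness of the $T$-action on $\widehat{\O}$) are established in the surrounding text.
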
 

We first use the formulas above to show the following lemma, from which we will need only the case $a=b$ in the proof of Proposition \ref{lemma:DinE}. Recall from \ref{proposition:nonisolist}(\ref{proposition:nonisolist-Aut}) that $\Aut^T_{g_0}(\O^\reg)$ is the group of all $T$-preserving diffeomorphisms of $\O^\reg$ which preserve the $g_0$-norm of vectors tangent to the $T$-orbits in $\widehat{\O}$ and induce an isometry of $(\widehat{\O}/T,g_0^T)$.

\begin{lemma}
\label{lemma:Oab}
Let $a,b>0$ with $a^2+b^2<1$ and $F\in \Aut^T_{g_0}(\O^\reg)$. Then $F(\O_{a,b}\cup \O_{b,a})= \O_{a,b}\cup \O_{b,a}$.
\end{lemma}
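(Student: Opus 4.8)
The idea is that $F\in\Aut^T_{g_0}(\O^\reg)$ preserves all the intrinsic geometric data attached to the $T$-orbits, and that the union $\O_{a,b}\cup\O_{b,a}$ can be characterized purely in terms of such data. First I would recall that $F$ is $T$-preserving with induced automorphism $\Psi_F\in\mathcal D\subset\Aut(\mathfrak t)$, so by \ref{proposition:nonisolist}(\ref{proposition:nonisolisti}) we have $F_*\widehat Z=\widehat{\Psi_F(Z)}$ for all $Z\in\mathfrak t$, and $F$ preserves the $g_0$-norm of vectors tangent to the $T$-orbits. The key observation is that $\mathcal D$ preserves the lattice $\mathcal L$; since $\mathcal L=\spann_\Z\{2\pi Z_1,\tfrac{2\pi}{p}(Z_1+Z_2)\}$, a short computation shows that any $\Psi\in\Aut(\mathfrak t)$ with $\Psi(\mathcal L)=\mathcal L$ must permute (up to sign) the directions $Z_1,Z_2$ — i.e.\ $\mathcal D\subset\mathcal E$ in the notation of Definition \ref{definition:isomaps}(\ref{definition:equivalent}). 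This is what will force $F$ either to preserve the roles of the two coordinates $v_1,v_2$ or to swap them, which is exactly the dichotomy encoded by $\O_{a,b}$ versus $\O_{b,a}$.

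Next I would pin down an intrinsic description of $\O_{a,b}$. Using Corollary \ref{corollary:skalprod}, the Gram matrix of the vertical frame $\widehat{Z_1},\widehat{Z_2}$ at a point $\qao x$ over $(u,v)$ depends only on $(|v_1|^2,|v_2|^2)$ (equivalently, since $\|u\|^2=1-|v_1|^2-|v_2|^2$, only on the pair $(a^2,b^2)=(|v_1|^2,|v_2|^2)$). Hence the function $\qao x\mapsto\bigl(\langle\widehat{Z_1},\widehat{Z_1}\rangle,\langle\widehat{Z_2},\widehat{Z_2}\rangle,\langle\widehat{Z_1},\widehat{Z_2}\rangle\bigr)$ separates the submanifolds $\O_{a,b}$, and the unordered pair $\{a^2,b^2\}$ is recovered exactly by the \emph{unordered} pair of diagonal entries. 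Since $F$ preserves the $g_0$-inner products of orbit-tangent vectors and sends $\widehat{Z_k}$ to $\widehat{\Psi_F(Z_k)}=\pm\widehat{Z_{\tau(k)}}$ for some permutation $\tau$ (by $\mathcal D\subset\mathcal E$), the induced transformation on the Gram matrix is exactly conjugation by the corresponding signed permutation matrix. This fixes the diagonal pair as an unordered set, so $F$ carries $\O_{a,b}$ into the locus where $\{|v_1|^2,|v_2|^2\}=\{a^2,b^2\}$, which is precisely $\O_{a,b}\cup\O_{b,a}$; applying the same argument to $F^{-1}$ gives the reverse inclusion.

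The \textbf{main obstacle} is the first step: proving cleanly that $\mathcal D\subset\mathcal E$, i.e.\ that lattice-preserving automorphisms of $\mathfrak t$ can only permute $\pm Z_1,\pm Z_2$. Because $\mathcal L$ is the \emph{non-standard} lattice $\spann_\Z\{2\pi Z_1,\tfrac{2\pi}{p}(Z_1+Z_2)\}$ rather than $\spann_\Z\{Z_1,Z_2\}$, one cannot simply quote the integer-matrix case; I would instead note that the sublattice $\mathcal L\cap\spann_\R\{Z_1\}$ and the analogous intersections single out the coordinate axes intrinsically, or equivalently argue via the dual lattice $\mathcal L^*$ and the constraint that $\Psi$ must also preserve the orbit-norm data from Corollary \ref{corollary:skalprod}, which is invariant under coordinate swaps but not under generic lattice automorphisms. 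Once $\mathcal D\subset\mathcal E$ is established, the remainder is the routine Gram-matrix bookkeeping sketched above, and the case $a=b$ (the only one needed later, where $\O_{a,b}=\O_{b,a}$ and the statement reduces to $F$-invariance of a single $\O_{a,a}$) follows immediately.
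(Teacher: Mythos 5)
There is a genuine gap, and it is located exactly where you flagged your ``main obstacle'': the inclusion ${\mathcal D}\subset{\mathcal E}$. In the paper this inclusion is Proposition \ref{proposition:DinE}, and its proof \emph{uses} Lemma \ref{lemma:Oab} (the case $a=b$): one first knows that $F$ preserves $\O_{a,a}$, and only then can one compare the Gram data and geodesic loops of the torus orbits at $\qao{x}$ and $F(\qao{x})$ to force $\Psi_F(Z_k)\in\{\pm Z_1,\pm Z_2\}$. So building the proof of Lemma \ref{lemma:Oab} on ${\mathcal D}\subset{\mathcal E}$ is circular unless you supply an independent proof, and your candidate arguments do not give one. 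The ``short computation'' claim is false: the group $\{\Psi\in\Aut(\mathfrak{t});\ \Psi({\mathcal L})={\mathcal L}\}$ is an infinite group (a conjugate of $GL_2(\Z)$), not the eight signed permutations; e.g.\ for $p=1$ one has ${\mathcal L}=2\pi\,\spann_\Z\{Z_1,Z_2\}$ and $Z_1\mapsto Z_1$, $Z_2\mapsto Z_1+Z_2$ preserves ${\mathcal L}$ but is not in ${\mathcal E}$. The proposed repairs fail for the same reason: nothing intrinsic to a rank-two lattice singles out the coordinate axes (so ${\mathcal L}\cap\spann_\R\{Z_1\}$ is not distinguished among the rank-one intersections), and the ``orbit-norm data'' constraint is precisely the paper's Proposition \ref{proposition:DinE} argument, which needs to know where $F$ sends the orbit $T\qao{x}$ before any comparison can be made --- i.e.\ it needs Lemma \ref{lemma:Oab} as input.

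For comparison, the paper's proof avoids ${\mathcal D}\subset{\mathcal E}$ altogether and splits into two steps. First, $F$ preserves each set $\O_c$ ($c=\|u\|$ fixed): the induced isometry of $\widehat{\O}/T\simeq N/S^1$ extends to the metric completion $\ol{N}/S^1$, the singular stratum $N_1/S^1=\bigl(S^{2n-3}\times\{(0,0)\}\bigr)/S^1$ is preserved, and $N_c/S^1$ is exactly the set of points at distance $\arccos(c)$ from it. Second, since $F$ preserves norms of orbit-tangent vectors, it preserves the area of each orbit $T\qao{x}$, i.e.\ the determinant of the Gram matrix from Corollary \ref{corollary:skalprod}; on a fixed $\O_c$ this determinant is a fixed multiple of $a^2b^2$, so $a^2+b^2$ (step one) and $a^2b^2$ (step two) are both preserved, which pins down $\{a^2,b^2\}$ as an unordered pair. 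Note also that even granting ${\mathcal D}\subset{\mathcal E}$, your second step is not ``routine bookkeeping'': recovering $\{a^2,b^2\}$ from the Gram matrix up to signed-permutation conjugation requires an actual injectivity computation (it can be done, e.g.\ via the quantities $g_{11}+g_{12}$, $g_{22}+g_{12}$ and the monotonicity of $s\mapsto(1-s)^2/\bigl(p^2(1-s)+q^2s\bigr)$), but that substitute for the paper's completion argument still leaves the fatal first gap in place.
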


\begin{proof}
For $c\in (0,1)$ set
\[\O_c=\bigcup_{\substack{r^2+s^2=1-c^2\\ r,s>0}} \O_{r,s}\subset \widehat{\O}.\]
We proceed in two steps.

\subsubsection*{First step}
We will first show that $F$ preserves every $\O_c$. To this end for each $c\in (0,1)$ set
\[N_c:=S^{2n-3}(c)\times\{(r,s)\in(\R_{>0})^2;~r^2+s^2=1-c^2\}\subset S^{2n-1}\subset \C^{n-1}\times\R^2\]
and observe that $N_c/S^1=\Phi(\O_c/T)$ and $N=\bigcup_{c\in(0,1)}N_c$ (with $\Phi$ and $N$ given in \eqref{equation:phi}).

Now fix $c\in (0,1)$. Note that $\O_c$ is $T$-invariant and hence $F$ preserves $\O_c$ if and only if the induced isometry $\ol{F}\in\ol{\Aut}^T_{g_0}(\O^\reg)$ of $\widehat{\O}/T$ leaves $\O_c/T$ invariant.
The isometry $\Phi\co \widehat{\O}/T\to N/S^1$ has a unique continuous extension
\[\widetilde{\Phi}\co \O/T=\widetilde{\widehat{\O}}/T=\widetilde{\widehat{\O}/T}\to \widetilde{N/S^1}=\widetilde{N}/S^1=\ol{N}/S^1,\]
where the tildes denote the completions of the respective metric spaces. This extension is given by $(S^{2n+1}/S^1)/T\ni[(u,v_1,v_2)]\mapsto [(u,|v_1|,|v_2|)]$. Write $\widetilde{\pi}\co \O\to\O/T$ for the canonical projection and note that $\widetilde{\pi}$ is the unique continuous extension of $\pi\co \widehat{\O}\to\widehat{\O}/T$. Moreover, note that
\[\ol{N}=\{(u,r,s)\in \C^{n-1}\times\R_{\ge 0}\times\R_{\ge 0};\|u\|^2+r^2+s^2=1\}\subset S^{2n-1}\subset\C^{n-1}\times\R^2.\]
Extend $\bar{F}\in\Isom(\widehat{\O}/T)$ uniquely to a metric space isometry $\widetilde{F}$ of $\O/T$ and note that $\widetilde{F}\circ\widetilde{\pi}=\widetilde{\pi}\circ F$ by continuity. Now set $N_1:=S^{2n-3}\times\{(0,0)\}\subset\ol{N}$. Then it is straightforward to see:
\begin{enumerate}
\item \label{enumerate:N1} $N_1/S^1$ is preserved by the isometry $\widetilde{\Phi}\circ\widetilde{F}\circ\widetilde{\Phi}^{-1}$.
\item \label{enumerate:N2} $N_c/S^1$ is precisely the set of all points in $\widetilde{N}/S^1$ which have distance $\arccos(c)$ from $N_1/S^1$.
\end{enumerate}
(\ref{enumerate:N1}) and (\ref{enumerate:N2}) together imply that $\widetilde{\Phi}\circ\widetilde{F}\circ\widetilde{\Phi}^{-1}\in\Isom(\ol{N}/S^1)$ preserves $N_c/S^1$, hence $\ol{F}=\widetilde{F}_{|\widehat{\O}/T}$ leaves $\O_c/T$ invariant and $\O_c$ is preserved by $F$.

\subsubsection*{Second step} Now let $a,b>0$ with $a^2+b^2<1$ and $F\in \Aut^T_{g_0}(\O^\reg)$ be as in the lemma and fix $\qao{x}=\qao{(u,v)}\in\O_{a,b}$.
Note that the area of $T$  with respect to its standard bi-invariant metric (with $\{Z_1,Z_2\}$ an orthonormal basis of $\mathfrak{t}$) is $4\pi^2/p$, since $T\simeq \mathfrak{t}/{\mathcal L}$ with ${\mathcal L}=\spann_\Z\left\{2\pi Z_1,\frac{2\pi}{p}(Z_1+Z_2)\right\}$.
By Lemma \ref{lemma:torus-metric} we conclude that the area of $T\qao{x}$ is given by $A(T\qao{x})=\frac{4\pi^2}{p}\sqrt{\det(\langle {\widehat{Z_j}}_{\qao{x}},{\widehat{Z_k}}_{\qao{x}}\rangle)_{j,k=1,2}}$. Now set $c=\|u\|=\sqrt{1-a^2-b^2}$ so that $\O_{a,b}\cup\O_{b,a}\subset\O_c$. Corollary \ref{corollary:skalprod} then implies
\begin{equation}
  \label{equation:area}
  \frac{p^2}{16\pi^4}A(T\qao{x})^2=a^2b^2\left(1-\frac{q^2(1-c^2)}{p^2c^2+q^2(1-c^2)}\right).
\end{equation}

Note that since $F$ preserves the length of vectors tangent to $T$-orbits by definition, we have $A(T\qao{x})=A(F(T\qao{x}))=A(T F(\qao{x}))$. Moreover, we had seen in the first step that $\O_c$ is invariant under $F$. These two observations and \eqref{equation:area} then imply that for $[(u^\prime,v_1^\prime,v_2^\prime)]:=F(\qao{x})$ and $a^\prime:=|v_1^\prime|,b^\prime:=|v_2^\prime|$, we have ${a^\prime}^2+{b^\prime}^2=a^2+b^2$ and ${a^\prime}^2{b^\prime}^2=a^2b^2$, i.e., $a=a^\prime~\wedge~b=b^\prime$ or $a=b^\prime~\wedge~b=a^\prime$. In other words, $F$ preserves $\O_{a,b}\cup \O_{b,a}$. Since $F^{-1}$ also lies in $\Aut^T_{g_0}(\O^\reg)$, the lemma follows.
\end{proof}

Recall that we had set ${\mathcal D} := \{\Psi_F;~F\in\Aut^T_{g_0}(\O^\reg)\}\subset\Aut({\mathfrak t})$ in \ref{proposition:nonisolist}(\ref{proposition:nonisolist-D}) and ${\mathcal E}:=\{\phi\in\Aut({\mathfrak t});~\phi(Z_k)\in\{\pm Z_1,\pm Z_2\}~\forall k=1,2\}$ in Definition \ref{definition:isomaps}(\ref{definition:equivalent}). We are now in a position to show that in our example we have the following inclusion.

\begin{lemma}
\label{lemma:DinE}
${\mathcal D}\subset{\mathcal E}$.
\end{lemma}

\begin{proof}
Let $F\in\Aut_{g_0}^T(\O^\reg)$.
We have to show that $\Psi_F(Z_k)\in\{\pm Z_1,\pm Z_2\}$ for $k=1,2$. By \ref{proposition:nonisolist}(\ref{proposition:nonisolisti}) 
we know $F_*(\widehat{Z_k})=\widehat{\Psi_F(Z_k)}$ on $\widehat{\O}$. The map \[\mathfrak{t}\ni Z\mapsto \widehat{Z}_\qao{x}\in 
T_\qao{x}\widehat{\O}\] is injective for any $[x]\in\widehat{\O}$ as the differential of the embedding $R^{[x]}:T\ni z\mapsto 
z[x]\in\widehat{\O}$. So it suffices to show that 
\[{F_*}_\qao{x}(\widehat{Z_k}_\qao{x})\in\bigl\{\pm\widehat{Z_1}_{F(\qao{x})},\pm\widehat{Z_2}_{F(\qao{x})}\bigr\}\] 
for $k=1,2$ in a single 
point $\qao{x}\in\widehat{\O}$.

By \eqref{corollary:winkel:2} we can choose $a\in (0,\frac1{\sqrt{2}})$ such that 
$\mathrm{cos}\angle(\widehat{Z_1}_\qao{x},\widehat{Z_2}_\qao{x})$ is irrational for all $\qao{x}\in\O_{a,a}$.
Now fix $\qao{x}\in \O_{a,a}$ and temporarily write $\langle Y_1, 
Y_2\rangle:=\langle\widehat{Y_1}_\qao{x},\widehat{Y_2}_\qao{x}\rangle$ and $\|Y\|:=\sqrt{\langle Y,Y\rangle}$ for 
$Y_1,Y_2,Y\in\mathfrak{t}$. Note that $\|Z_1\|=\|Z_2\|$.
By our choice of $a$ we observe that if $k,l\in\Z$ and \[\|kZ_1+lZ_2\|^2/\|Z_1\|^2\in\Q,\] then $kl=0$.
Hence if $2\pi Y\in{\mathcal L}$ with $\|Y\|=\|Z_1\|$, then (since $2\pi pY\in\widetilde{\mathcal L}$ and hence $pY=k Z_1+l Z_2$ for 
some $k,l\in\Z$) we have $pY\in\{\pm pZ_1,\pm pZ_2\}$ and hence $Y\in\{\pm Z_1,\pm Z_2\}$.

This implies that the images of the two flow lines generated by $\widehat{Z_1}$ and $\widehat{Z_2}$ through $[x]$ give precisely the 
geodesic loops in $T\qao{x}\subset \O_{a,a}$ through $[x]$ of length $2\pi\|Z_1\|$; recall from Lemma \ref{lemma:torus-metric} that 
$T\ni z\mapsto z[x]\in T[x]$ is an isometry with respect to the left-invariant metric $\langle,\rangle$ above on $T$, hence such 
flow lines are indeed geodesics.

Since $F$ preserves $\O_{a,a}$ by Lemma \ref{lemma:Oab}, we have $F(T[x])\subset\O_{a,a}$ and the geodesic loops in 
$TF([x])=F(T[x])$ through $F([x])$ of length $2\pi\|Z_1\|$ are given precisely by 
the flow lines of $\widehat{Z_1}$ and $\widehat{Z_2}$ through $F([x])$.
On the other hand, since $F\co T\qao{x}\to F(T[x])$ is an isometry, the images of the flow lines of $F_*\widehat{Z_1}$ and 
$F_*\widehat{Z_2}$ through $F([x])$ in $F(T[x])$ also have length $2\pi\|Z_1\|$. Together this implies 
${F_*}_{\qao{x}}({\widehat{Z_j}}_{[x]})\in\{\pm\widehat{Z_1}_{F(\qao{x})},\pm\widehat{Z_2}_{F(\qao{x})}\}$ for $j=1,2$. As noted 
above, this proves our statement.
\end{proof}

Proposition \ref{proposition:nonisometry} (where we introduced the properties \eqref{eq:N} and \eqref{eqG} below) and the following proposition will 
imply that if isospectral maps $j$ and $j^\prime$ are not equivalent and $j^\prime$ is generic, then the corresponding isospectral 
orbifolds $(\O,g_\lambda)$, $(\O,g_{\lambda^\prime})$ with $\O=\O(p,q)$ are nonisometric. In the second and third step of the proof 
we basically follow the first part of the proof of \cite[Proposition 4.3]{MR1895349}. 

\begin{proposition}
  \label{proposition:example-NG}
Let $j,j^\prime\co \R^2\to \mathfrak{su}(n-1)$ be two linear maps and let $\lambda$, $\lambda^\prime$ be the admissible $\mathfrak{t}$-valued $1$-forms on $\O=\O(p,q)$ associated with $j$ and $j^\prime$.
\begin{enumerate}
  \item \label{enumerate:wp-N} If $j$ and $j^\prime$ are not equivalent in the sense of Definition \rmref{definition:isomaps}\eqref{definition:equivalent}, then $\Omega_\lambda$ and $\Omega_{\lambda^\prime}$ satisfy condition \eqref{eq:N}.
  \item\label{4.14.ii} 
  If $j^\prime$ is generic in the sense of Definition \rmref{definition:isomaps}\eqref{definition:generic}, then 
  $\Omega_{\lambda^\prime}$ has property \eqref{eqG}.
\end{enumerate}
\end{proposition}

\begin{proof}
Choose an arbitrary $a\in (0,1/\sqrt{2})$ and set $L:=\O_{a,a}\subset \widehat{\O}$. We write $\Omega_0^L$ for the $\mathfrak{t}$-valued 2-form on $L/T$ induced by the curvature form $\Omega_0$ on $(\widehat{\O}/T,g_0^T)$. Moreover, to a $\mathfrak{t}$-valued $k$-form $\eta$ on a manifold we associate real-valued $k$-forms $\eta^1,\eta^2$ via $\eta=:\eta^1 Z_1+\eta^2 Z_2$.

\subsubsection*{First step: calculation of $\Omega_0^L$} In this step we will show that on \[L/T\stackrel{\text{isom}}{\simeq}(\CP^{n-2},(1-2a^2)g_\text{FS})\] we have $(\Omega_0^L)^1=(\Omega_0^L)^2$ and this form is a nonvanishing multiple of the standard K\"ahler form.

Recall from \ref{proposition:nonisolist}(\ref{propositon:nonisolist-iv}) that $\omega_0\co  T\widehat{\O}\to\mathfrak{t}$ denotes the connection form on the principal $T$-bundle $\widehat{\O}$ associated with $g_0$. We first note that it is not hard to verify that with $P\co \widehat{S^{2n+1}}\to\widehat{\O}$ the canonical projection we have for $(u,v)\in \widehat{S^{2n+1}}$, $X=(U,V)\in T_{(u,v)}\widehat{S^{2n+1}}$, $j=1,2$:
\begin{equation}
 \label{equation:omega0}
 (P^*{\omega_0}^j)_{(u,v)}(X)=-\frac{q}{p}\frac{\langle U,iu\rangle}{\|u\|^2}+\frac{\langle V_j,iv_j\rangle}{|v_j|^2}
\end{equation}
Now write $\omega_0^L$ for the $\mathfrak{t}$-valued $1$-form on $L$ induced by $\omega_0$. \eqref{equation:omega0} implies that for $(u,v)\in P^{-1}(L)$ and $X=(U,V)\in T_{(u,v)}P^{-1}(L)$:
\begin{equation}
  \label{equation:omega0l}
  (P^*\omega_0^L)^j(X)=-\frac{q}{p(1-2a^2)}\langle U,iu\rangle+\frac1{a^2}{\langle V_j,i v_j\rangle}.
\end{equation}
Now note that $P^{-1}(L)=S^{2n-3}(\sqrt{1-2a^2})\times (S^1(a))^2\subset S^{2n+1}\subset\C^{n+1}$ and hence if $X=(U,V)\in T_{(u,v)}P^{-1}(L)$, then $V_j$ is a real multiple of $iv_j$ for $j=1,2$. Using this, \eqref{equation:omega0l} implies that for $X=(U,V), \widetilde{X}=(\widetilde{U},\widetilde{V})$ tangent to $P^{-1}(L)$ in $(u,v)$ and $j=1,2$:
\[ (P^*d\omega_0^L)^j(X,\widetilde{X})=-2\frac{q}{p(1-2a^2)}\langle iU,\widetilde{U}\rangle. \]
Therefore, on $L/T\stackrel{\text{isom}}{\simeq}(\CP^{n-2},(1-2a^2)g_\text{FS})$ the form $(\Omega_0^L)^1=(\Omega_0^L)^2$ is a nonvanishing multiple of the standard K\"ahler form.

\subsubsection*{Second step: proof of \eqref{enumerate:wp-N}} Suppose that condition \eqref{eq:N} is not satisfied. Then there is $\Psi\in{\mathcal D}$ and $F\in\Aut_{g_0}^T(\O^\reg)$ such that $\Omega_\lambda=\Psi\circ\qbo{F}^*\Omega_{\lambda^\prime}$. Since $\qbo{F}$ preserves $L/T$ by Lemma \ref{lemma:Oab}, this implies $\Omega_\lambda^L=\Psi\circ\qbo{F}^*\Omega_{\lambda^\prime}^L$. Now $\Omega_\lambda=\Omega_0+d\qbo{\lambda}$ and $\Omega_{\lambda^\prime}=\Omega_0+d\qbo{\lambda^\prime}$ (\ref{proposition:nonisolist}(\ref{proposition:nonisolist-vi})) imply (with $\ol{\lambda}^L$ denoting the $\mathfrak{t}$-valued $1$-form on $L/T$ induced by $\ol{\lambda}$, and analogously for $\ol{\lambda^\prime}$):
\begin{equation}
 \label{equation:Omega-lambda}
 \Omega_0^L+d\ol{\lambda}^L=\Omega_\lambda^L=\Psi\circ\ol{F}^*\Omega_{\lambda^\prime}^L=\Psi\circ\ol{F}^*(\Omega_0^L+d\ol{\lambda^\prime}^L).
\end{equation}
In particular, $\Omega_0^L-\Psi\circ\ol{F}^*\Omega_0^L$ is exact. Moreover, note that Proposition \ref{lemma:DinE} implies $\Psi\in{\mathcal E}$. The first step above shows that $\Omega_0^L-\Psi\circ\ol{F}^*\Omega_0^L\in\{0,2\Omega_0^L\}$ and that $2\Omega_0^L$ cannot be exact. Hence $\Omega_0^L-\Psi\circ\ol{F}^*\Omega_0^L=0$ and \eqref{equation:Omega-lambda} implies
\begin{equation}
\label{equation:dlambda}
d\qbo{\lambda}^L=\Psi\circ\qbo{F}^*d\qbo{\lambda^\prime}^L.
\end{equation}
Let $Q\co \C^{n-1}\to\C^{n-1}$ denote complex conjugation and choose \[A\in SU(n-1)\cup SU(n-1)\circ Q\] such that $A$ induces (via the Hopf fibration $\C^{n-1}\supset S^{2n-3}\to \CP^{n-2}$) the isometry on $L/T\simeq (\CP^{n-2},(1-2a^2)g_\text{FS})$ corresponding to $\qbo{F}_{|L/T}$, i.e., such that $P\circ(A,I_2)_{|P^{-1}(L)}=F\circ P_{|P^{-1}(L)}$. Then, with $\kappa^L$ denoting the restriction of $\kappa$ to $P^{-1}(L)$ (and analogously for $\kappa^\prime$), pulling back both sides of \eqref{equation:dlambda} via $\pi\circ P$, we obtain
\begin{equation}
d\kappa^L=\Psi\circ (A,I_2)^*d{\kappa^\prime}^L.\label{equation:dkappa}
\end{equation}
For $k\in\{1,2\}$ set $j_k:=j_{Z_k}$. Letting \[(u,v)\in P^{-1}(L)=S^{2n-3}(\sqrt{1-2a^2})\times(S^1(a))^2,\] we have by 
\eqref{equation:kappa} for $(U,V)\in T_{(u,v)}(P^{-1}(L))$:
\[\kappa^k_{(u,v)}({U},{V})=(1-2a^2)\langle j_k u,{U}\rangle-\langle {U},iu\rangle\langle j_ku,i u\rangle.\]
For $(U_1,V_1),(U_2,V_2)\in T_{(u,v)}(P^{-1}(L))$ we get by elementary differentiation and skew-symmetry:
\begin{align}\label{equation:dkappa-u1u2}
d\kappa_{(u,v)}^k&(({U_1},{V_1}),({U_2},{V_2}))
\\&=2(1-2a^2)\langle j_kU_1,U_2 \rangle-2\langle iU_1,U_2\rangle\langle j_ku,iu\rangle\notag\\
&\quad\;-2\langle U_2,iu\rangle\langle j_kU_1,iu\rangle+2\langle U_1,iu\rangle\langle j_kU_2,iu\rangle\notag\\
&=2(1-2a^2)\langle j_kU_1^h,U_2^h\rangle -2\langle j_ku,iu\rangle \langle i{U_1},{U_2}\rangle,
\notag\end{align}
where we write $U^h=U-\frac{\langle U,iu\rangle}{1-2a^2}iu$ for the orthogonal projection of $U\in T_uS^{2n-3}(\sqrt{1-2a^2})$ to $(iu)^\perp$. By Proposition \ref{lemma:DinE} we can choose $\varepsilon_{k}\in\{\pm 1\}$ and $l\in\{1,2\}$ such that $\Psi(Z_k)=\varepsilon_kZ_l$. Plugging \eqref{equation:dkappa-u1u2} and the analogous formula for $\kappa^\prime$, $j^\prime$ into \eqref{equation:dkappa}, we obtain:
\begin{align*}
  2(1-2a^2)\langle &j_lU_1^h,U_2^h\rangle-2\langle j_lu,iu\rangle\langle i{U_1},{U_2}\rangle\\
&= 2\varepsilon_k\left((1-2a^2)\langle A^{-1}j_k^\prime 
  AU_1^h,U_2^h\rangle- \langle A^{-1}j_k^\prime Au,iu\rangle\langle i{U_1},{U_2}\rangle\right).
\end{align*}
Setting $\tau_k:=\varepsilon_kA^{-1}j_k^\prime A-j_l\in \mathfrak{su}(n-1)$ gives 
\[0=(1-2a^2)\langle \tau_kU_1^h,U_2^h\rangle-\langle \tau_ku,iu\rangle\langle i{U_1},{U_2}\rangle.\]
Plugging ${U_2}=i{U_1}$ into the last equation, we can conclude that
the map $\phi\co \C^{n-1}\setminus\{0\}\ni U\mapsto\frac{\langle i \tau_k{U},U\rangle}{\|{U}\|^2}\in\R$ is constant, say $C$, on $\spann\{u,iu\}^\perp\setminus\{0\}$ and $\phi(u)=\phi(iu)=C$. Since $i\tau_k$ is hermitian and has trace zero, we have $\tau_k=0$.
This finally implies $j_{\Psi(Z_k)}=A^{-1}j^\prime_{Z_k}A$ for $k=1,2$ and therefore $j_{\Psi(Z)}=A^{-1}j^\prime_ZA$.

\subsubsection*{Third step: proof of \eqref{4.14.ii}} Assume that $\Omega_{\lambda^\prime}$ does not satisfy property \eqref{eqG}. Then there is a nontrivial one-parameter family $\bar{F}_t\in\ol{\Aut}_{g_0}^T(\O^\reg)$ such that $\bar{F}_t^*\Omega_{\lambda^\prime}=\Omega_{\lambda^\prime}$ for all $t$. The same argument as above (with $\Psi=\Id$ and $j=j^\prime$) gives a one-parameter family $A_t\in SU(n-1)\cup SU(n-1)\circ Q$ such that $(A_t,I_2)$ preserves $d{\kappa^\prime}^L$. Note that $A_0=\Id$ implies $A_t\in SU(n-1)$. As in the proof of (\ref{enumerate:wp-N}) the relation $(A_t,I_2)^*d{\kappa^\prime}^L=d{\kappa^\prime}^L$ implies $j^\prime_Z=A_t j^\prime_ZA_t^{-1}$. Taking the derivative with respect to $t$ in $0$ gives $0=[\dot{A}_0,j^\prime_Z]$ for all $Z\in\mathfrak{t}$ in contradiction to the genericity assumption.
\end{proof}

As announced in the beginning of this section, we can now put all pieces together to obtain our main result.

\begin{theorem}
\label{theorem:maintheorem}
 For every $n\ge 4$ and for all pairs $(p,q)$ of coprime positive integers there are isospectral families of pairwise nonisometric metrics on the orbifold $\O=\O(p,q)$, a weighted projected space of dimension $2n\ge 8$, which is a bad orbifold for $(p,q)\ne (1,1)$.
\end{theorem}
\begin{proof}
  Direct consequence of Proposition \ref{proposition:isomaps}, Theorem \ref{theorem:isoorbis}, 
  Proposition~\ref{proposition:nonisometry}, and Proposition \ref{proposition:example-NG}.
\end{proof}

\subsection{Isospectral quotients of weighted projective spaces}
\label{subsection:sutton}
In this section we will apply ideas from \cite{sutton10} to give isospectral metrics on quotients of the form $\O(p,q)/G$ with 
$\O(p,q)$ from the preceding sections and $G$ now a finite subgroup of the given $2$-torus $T$ (which had been introduced in 
Section~\ref{subsubsection:isospectral-pairs}).

We first phrase a special case of Sutton's results on equivariant isospectrality (\cite{sutton10}, also compare \cite{MR1396675}) for orbifolds. Suppose we are given a Riemannian orbifold $(\O,g)$ and a finite subgroup $G$ of its isometry group. Then $\O/G$ carries a canonical orbifold structure and a Riemannian orbifold metric $\bar{g}$ such that the canonical projection $P\co (\O,g)\to(\O/G,\bar{g})$ becomes a Riemannian orbifold covering.

\begin{theorem}
\label{theorem:suttonorbi}
Let $G$ be a finite group acting effectively and isometrically on two compact Riemannian orbifolds $(\O_1,g_1)$ and $(\O_2,g_2)$ 
such that the latter are equivariantly isospectral with respect to $G$, i.e., such that there is a unitary isomorphism $U\co 
L^2(\O_1,g_1)\to L^2(\O_2,g_2)$ between the $G$-representations $\tau_1^G$ and $\tau_2^G$ (given by $\tau_i^G(g)f(x)=f(g^{-1}x)$ for 
$f\in L^2(\O_i,g_i)$, $x\in\O_i$) with the following property: $U$ maps eigenfunctions on $(\O_1,g_1)$ to eigenfunctions on 
$(\O_2,g_2)$ associated with the same eigenvalue.

Then $(\O_1/G,\bar{g}_1)$ and $(\O_2/G,\bar{g}_2)$ are isospectral orbifolds.
\end{theorem}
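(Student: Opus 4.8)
The plan is to reduce the isospectrality of the quotients to the equality of the \emph{$G$-invariant} spectra of $(\O_1,g_1)$ and $(\O_2,g_2)$, and then to read off that equality from the equivariant isospectrality hypothesis by elementary representation theory. Throughout, let $E_\lambda^i\subset L^2(\O_i,g_i)$ denote the $\lambda$-eigenspace of $\Delta_{\O_i}$ (finite-dimensional by Theorem \ref{theorem:eigenspaces}) and write $(E_\lambda^i)^G$ for its $G$-invariant part.

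First I would make precise the relation between the spectrum of a quotient $\O_i/G$ and the $G$-invariant spectrum of $\O_i$. Since $G$ acts by isometries, the projection $P_i\co (\O_i,g_i)\to(\O_i/G,\bar g_i)$ is a Riemannian orbifold covering and hence a local isometry, so pull-back $P_i^*$ identifies $C^\infty(\O_i/G)$ with the space $C^\infty(\O_i)^G$ of $G$-invariant smooth functions and intertwines the two Laplacians, $\Delta_{\O_i}\circ P_i^*=P_i^*\circ\Delta_{\O_i/G}$. Passing to $L^2$, the operator $|G|^{-1/2}P_i^*$ is a unitary isomorphism of $L^2(\O_i/G,\bar g_i)$ onto the closed $G$-invariant subspace $L^2(\O_i,g_i)^G$ which carries the $\lambda$-eigenspace of $\Delta_{\O_i/G}$ onto $(E_\lambda^i)^G$. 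In particular the multiplicity of $\lambda$ in $\spec(\O_i/G)$ equals $\dim(E_\lambda^i)^G$, so it suffices to prove $\dim(E_\lambda^1)^G=\dim(E_\lambda^2)^G$ for every $\lambda$.

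Next I would exploit the hypothesis. Because $G$ acts isometrically, it commutes with $\Delta_{\O_i}$, so each $E_\lambda^i$ is a finite-dimensional $G$-subrepresentation of $\tau_i^G$. Since $U$ preserves eigenvalues and is a unitary (hence bijective), the orthogonal eigenspace decompositions of $L^2(\O_1,g_1)$ and $L^2(\O_2,g_2)$ force $U$ to restrict to an isomorphism $E_\lambda^1\xrightarrow{\sim}E_\lambda^2$ for every $\lambda$. As $U$ intertwines $\tau_1^G$ and $\tau_2^G$, it carries $G$-invariant vectors to $G$-invariant vectors, so this restriction further restricts to an isomorphism $(E_\lambda^1)^G\xrightarrow{\sim}(E_\lambda^2)^G$. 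Hence $\dim(E_\lambda^1)^G=\dim(E_\lambda^2)^G$, and combining with the first step shows that every $\lambda$ occurs with the same multiplicity in $\spec(\O_1/G)$ and $\spec(\O_2/G)$, which is the claim.

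The main obstacle is the first step, i.e.\ setting up rigorously the identification of the quotient spectrum with the $G$-invariant spectrum in the orbifold category: one must verify that $P_i$ is genuinely a Riemannian orbifold covering (a local isometry of orbifolds), that $P_i^*$ identifies smooth functions downstairs with $G$-invariant smooth functions upstairs and intertwines the Laplacians, and that these facts extend to the $L^2$-completions so that eigenspaces match up. Once this orbifold-covering bookkeeping is in place, the remainder is the purely formal observation that an eigenvalue-preserving $G$-equivariant unitary automatically matches $G$-invariant eigenspaces; the commutation $[\Delta_{\O_i},G]=0$ and the finite-dimensionality of the eigenspaces are standard consequences of $G$ acting by isometries together with Theorem \ref{theorem:eigenspaces}.
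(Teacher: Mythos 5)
Your proposal is correct and takes essentially the same approach as the paper: the paper's proof simply defers to Sutton's Theorem 2.7 in \cite{sutton10}, whose argument is exactly yours --- identify $\spec(\O_i/G,\bar g_i)$ with the $G$-invariant spectrum of $(\O_i,g_i)$ via the Riemannian orbifold covering $P_i$, then use the eigenvalue-preserving $G$-equivariant unitary $U$ to match the $G$-invariant eigenspaces $(E_\lambda^1)^G\simeq(E_\lambda^2)^G$. You have merely written out the orbifold adaptation that the paper leaves to the reader.
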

\begin{proof}
  Just adapt the proof of \cite[Theorem 2.7]{sutton10} to this very simple case (replacing the manifolds $M_1,M_2$ by $\O_1,\O_2$).
\end{proof}

The orbifolds from Theorem \ref{theorem:torusaction} are then seen to be equivariantly isospectral with respect to the torus $T$ from that theorem via the same argument as for the manifold version, for which the equivariant isospectrality was already observed in \cite{sutton10}.
Hence in the situation of Theorem \ref{theorem:isoorbis} with $G$ a finite subgroup of $T$ the two orbifolds $(\O/G,\bar{g}_\lambda),(\O/G,\bar{g}_{\lambda^\prime})$ are isospectral.

\end{document}